%
%
%
%
%
%

\documentclass[math,plain,dissertation]{puthesis}

\usepackage{amsmath}

\usepackage{multicol}

\usepackage{subfig}

\title{Some Quantitative Results in Real Algebraic Geometry}

\author{Salvador Barone}{Barone, Salvador}

\pudegree{Doctor of Philosophy}{Ph.D.}{August}{2013}

\majorprof{Saugata Basu}

\campus{West Lafayette}

%
%
%






\usepackage{graphicx,color,epsf}
\usepackage [cmtip,arrow]{xy}
\xyoption{all}

\usepackage{subfig}
\usepackage{float}

\usepackage {pb-diagram,pb-xy}

\usepackage{amsmath,amssymb}
\newtheorem{lemma}[theorem]{Lemma}
\newtheorem{corollary}[theorem]{Corollary}
\newtheorem{notationdefinition}[theorem]{Notation and definition}

\newtheorem{example}[theorem]{Example}
\newtheorem{notation}[theorem]{Notation}
\newtheorem{property}[theorem]{Property}

\newtheorem{remark}[theorem]{Remark}

\definecolor{DarkBlue}{rgb}{0,0.1,0.55}

\numberwithin{equation}{section}



\newcommand {\hide}[1]{}

\newcommand {\junk}[1]{}

\newcommand {\ZZ} {{\rm Zer}}
\newcommand {\RR} {{\mathcal R}}

\newcommand {\V} {\mathbf{V}}

\newcommand {\la}   {{\langle}}
\newcommand {\ra}   {{\rangle}}
\newcommand {\eps} {{\varepsilon}}

\newcommand {\Id} {\mbox{\rm Id}}

\newcommand {\PP}     {\mathbb{P}} 

\newcommand {\T}      {{\mbox{\rm T}}}

\newcommand {\Zer} {{\rm Zer}}
\newcommand {\Reali} {{\rm Reali}}
\newcommand {\Cr} {{\rm Cr}}
\newcommand {\Pos} {{\rm Pos}}
\newcommand {\Def} {{\rm Def}}
\newcommand {\card} {{\rm card}}

\def\addots{\mathinner{\mkern1mu
\raise1pt\vbox{\kern7pt\hbox{.}}
\mkern2mu\raise4pt\hbox{.}\mkern2mu
\raise7pt\hbox{.}\mkern1mu}}

\newcommand{\HH}  {\mbox{\rm H}}

\newcommand{\defeq}{\;{\stackrel{\text{\tiny def}}{=}}\;}

\newcommand{\X}{\mathbf{X}}

\newcommand{\case}[1]{\textbf{Case #1:}}
\newcommand{\zz}{\begin{flushright}$\Box$\end{flushright}}
\newcommand{\blist}
    {\begin{list}
    {\textbf{\arabic{enumi}.}}
    {\setlength{\leftmargin}{0.365 in}
    \setlength{\rightmargin}{0.365 in}
    \usecounter{enumi}}}
\newcommand{\alist}
    {\begin{list}
    {\textbf{(\alph{enumii})}}
    {\setlength{\leftmargin}{0.365 in}
    \setlength{\rightmargin}{0.365 in}
    \usecounter{enumii}}}
\newcommand{\rlist}
    {\begin{list}
    {\textbf{(\roman{enumiii})}}
    {\setlength{\leftmargin}{0.365 in}
    \setlength{\rightmargin}{0.365 in}
    \usecounter{enumiii}}}
\newcommand{\Rlist}
    {\begin{list}
    {\textbf{(\Roman{enumii})}}
    {\setlength{\leftmargin}{0.365 in}
    \setlength{\rightmargin}{0.365 in}
    \usecounter{enumii}}}
\newcommand{\elist}{\end{list}}

\newcommand{\Ext}{\text{Ext}}
\newcommand{\ep}{\varepsilon}

\usepackage{graphicx,color,epsf}
\usepackage [cmtip,arrow]{xy}
\xyoption{all}

\usepackage{subfig}
\usepackage{float}

\usepackage {pb-diagram,pb-xy}

\usepackage{amsmath,amssymb}

\newcommand{\limit}{\mathrm{limit}}


\newcommand{\halfspace}
    {\quad \hspace{-0.12 in}  \hspace{-0.11 in} \quad }
\newcommand{\halfhalfspace}
    {\quad \hspace{-0.14 in}  \hspace{-0.15 in} \quad }

\newcommand{\halfbackspace}
    {\hspace{-0.06 in}}

\newcommand{\fraction}[2]
    {\textstyle{\frac{#1}{#2}}}

\newcommand{\C}{\textrm{C}}
\newcommand{\s}{\textrm{ sign}}
\newcommand{\Z}{\mathbb{Z}}
\newcommand{\re}{\mathbb{R}}
\newcommand{\N}{\mathbb{N}}

\newcommand{\TT}{\mathbb{T}}
\newcommand {\Sphere}{\mbox{${\bf S}$}}               

\setcounter{lofdepth}{2}

\newcommand{\R}{\textnormal{\textrm{R}}}

\newcommand{\wee}{\wedge}







\begin{document}

\volume

%
%
%
%
%


\begin{acknowledgments}

I am indebted to my wife Jess for her immeasurable philanthropy, my children Mia and Sal Jr. for being a constant source of entertainment, and my parents for their continued support and constant encouragement.  My deepest gratitude goes to my advisor, Saugata Basu, who provided all of these things and without whose thorough instruction and discerning mentoring this thesis would not have been possible.  Also, my thanks goes to my other committee members: Leonard Lipshitz, Ben McReynolds, and a special thanks to Andrei Gabrielov. 

\end{acknowledgments}


\tableofcontents


\listoffigures








\begin{abstract}

Real algebraic geometry is the study of semi-algebraic sets, subsets of $\R^k$ defined by boolean combinations of polynomial equalities and inequalities.  The focus of this thesis is to study quantitative results in real algebraic geometry, primarily upper bounds on the topological complexity of semi-algebraic sets as measured, for example, by their Betti numbers.  
Another quantitative measure of topological complexity which we study is the number of  homotopy types of semi-algebraic sets of bounded description complexity.  The description complexity of a semi-algebraic set depends on the context, but it is simply some measure of the complexity of the polynomials in the formula defining the semi-algebraic set (e.g., the degree and number of variables of a polynomial, the so called dense format). We also provide a description of the Hausdorff limit of a one-parameter family of semi-algebraic sets, up to homotopy type
which the author hopes will find further use in the future.  
Finally, in the last chapter of this thesis, we prove a decomposition theorem similar to the well-known cylindrical decomposition for semi-algebraic sets but which has the advantage of requiring significantly less cells in the decomposition.  The setting of this last result is not in semi-algebraic geometry, but in the more general setting of o-minimal structures.

\hide{
Real algebraic geometry is the study of semi-algebraic sets, subsets of $\R^k$ defined by boolean combinations of polynomial equalities and inequalities.  We primarily study quantitative results in real algebraic geometry, which in our case are results which provide quantitative upper bounds on the topological complexity of semi-algebraic sets as measured, for example, by their Betti-numbers.  
A second type of quantitative question which we study is to bound the number of possible homotopy types of semi-algebraic sets of bounded \textit{additive complexity}, 
and as an intermediate result we describe up to homotopy type the Hausdorff limit of a one-parameter family of semi-algebraic sets.  
These first two results are examples of the following paradigm: sets which have simple descriptions have simple shapes.  
Finally, in our last result of this thesis, we prove a decomposition theorem similar to the well-known cylindrical decomposition but which has the advantage of requiring significantly less cells in the decomposition.  
}

\hide{ 
Real algebraic geometry is the study of semi-algebraic sets, subsets of $\R^k$ which are defined by a boolean combination of
polynomial equalities and inequalities.
Given a description of a semi-algebraic set $S\subset \R^k$,
one type of quantitative question 
which arises is to
bound
the Betti numbers, the sum of the Betti numbers, or the Euler characteristic of $S$ in terms of
the maximum degree and number of the polynomials in the description of $S$, called the description complexity of $S$. Another problem might be to find a decomposition of $\R^k$ into finitely many cells of bounded description complexity which comprise $S$ so that the number of cells as well as their description complexity are bounded by functions which take as inputs the parameters of the description complexity of $S$.

Theoretical bounds of this type
are used to prove results in algorithmic and computational semi-algebraic geometry
where, for example,
one might wish to design an algorithm which can decide if two elements of a semi-algebraic set are in the same semi-algebraically connected component or not and, if so, to output a semi-algebraic path from one point to the other,
 and they have also been used in other areas of mathematics such as computational geometry to solve point-location or incidence problems.

On the other hand,
many of these quantitative aspects have been extended beyond the setting of semi-algebraic sets to the more general
theory of o-minimal geometry,
an axiomatic theory geared towards defining and exploring a tame geometry which shares many important properties with semi-algebraic geometry. 


}
\end{abstract}

\include{introduction}

\chapter{Preliminaries}{\label{ch:prelim}}

In this chapter we provide some background theory for the results of Chapter \ref{ch:refined} and Chapter \ref{ch:homotopy}.  The content of Chapter \ref{ch:semi} is self-contained except for the definition of real closed fields (Definition \ref{defn:real_closed}).

\section{Real closed fields and extensions}

We first define real closed fields and list some of their important properties.  A field $\R$ is an \textit{ordered field} if it is totally ordered and the order is compatible with the field operations, namely, for every $a,b,c\in \R$ we have 
$$\begin{aligned}
a\leq b &\implies a+c \leq b+c, \text{ and} \\
a\geq 0 \wee b\geq 0 &\implies ab \geq 0. 
\end{aligned}$$

\begin{definition}{\label{defn:real_closed}}
An ordered field $\R$ is real closed if $\R$ has no algebraic ordered extensions.
\end{definition}

There are several equivalent definitions for a field to be real closed (see \cite{BPRbook2}).

\begin{theorem}[Theorem 2.11 \cite{BPRbook2}]
Let $\R$ be a field.  Then the following are equivalent:
\alist 
\item $\R$ is real closed.
\item $\R[i]=\R[T]/(T^2+1)$ is an algebraically closed field.
\item $\R$ has the intermediate value property.
\item The non-negative elements of $\R$ are exactly squares of the form $a^2$, and every polynomial in $\R[X]$ of odd degree has a root in $\R$. 
\elist
\end{theorem}

The prototypical example of a real closed field is $\mathbb{R}$ the field of real numbers.  However, the use of an arbitrary real closed field as a base field is pervasive in real algebraic geometry. The reason for working over an arbitrary real closed field rather than $\mathbb{R}$ is two-fold.  First, for mathematical reasons it is desirable to prove results using the smallest number of assumptions as possible in order to make the results stronger.  More importantly is the fact that in the proof of many results it is extremely useful to consider ordered field extensions of the starting base field and thus it is useful for theorems to hold over arbitrary real closed fields (see Section \ref{subsec:real_extensions}).

\subsection{Real extensions, algebraic Puiseux series}{\label{subsec:real_extensions}}

For any $\R$ ordered field, a \textit{real extension} of $\R$ is an ordered field $\R'$ such that $\R\subseteq \R'$ and the ordering of $\R'$ restricted to $\R$ is just the order on $\R$ (note that if $\R$ is real closed this implies that the extension is not algebraic).  
One of the main tools we will use is a certain type of real extension, namely, the extension of a real closed field by an infinitesimal. 

Let $\R$ be a real closed field.  A \textit{Puiseux series} with coefficients in $\R$ is a formal Laurent series in the indeterminate $\ep^{1/q}$, which takes the form $\sum_{i\geq k} a_i \ep^{i/q}$, $i\in \mathbb{Z}$, $a_i\in \R$, and $q$ a positive integer.  Under the ordering $0<\ep<a$ for any $a\in \R$, the field of Puiseux series is real closed extension of $\R$ (see \cite{BPRbook2}) in which $\ep$ is an infinitesimal. We will primarily be concerned with the real closed subfield $\R\langle \ep \rangle$ of \textit{algebraic Puiseux series}, those Puiseux series which are algebraic over $\R(\ep)$.  

An important concept regarding algebraic Puiseux series which will be discussed in greater detail in Chapter \ref{ch:refined} (Section \ref{sec:puiseux}) is the notion of \textit{limits}.  A Puiseux series $\xi$ is \textit{bounded over} $\R$ if $|\xi|<a$ for some $a\in \R$.  We denote the set of Puiseux series bounded over $\R$ by $\R\langle \ep \rangle_b$. 
We define the $\lim_\ep$ map to be the ring homomorphism from $\R\langle \ep \rangle_b$ to $\R$ which sends a Puiseux series $\sum_{i\geq 0} a_i \ep^{i/q}$ to $a_0$.  The map $\lim_\ep$ simply replaces $\ep$ by $0$ in a  Puiseux series which has no terms corresponding to negative indices.


\section{Semi-algebraic sets and formulas}

We now formally define semi-algebraic sets, using the definition found in \cite{BPRbook2}.  The \textit{semi-algebraic sets} of $\R^k$ are the smallest collection of subsets of $\R^k$ defined by polynomial equalities or polynomial strict inequalites, \textit{i.e.,} sets of the form $\{x\in \R^k| \; P(x)=0\}$ or $\{x\in \R^k|\; P(x)>0\}$ respectively, which is closed under the boolean operations (finite intersections, finite unions, and taking complements).  Semi-algebraic sets can be described by \textit{formulas} involving polynomial equalities and inequalities, and to make this precise we next define what we mean by a formula.  




In the language of logic, a \textit{quantifier-free formula} is defined inductively: an \textit{atom} is a quantifier-free formula, and if $\alpha,\beta$ are two quantifier-free formulas then $\alpha \wedge \beta$, $\alpha \vee \beta$, and $\neg \alpha$ are quantifier-free formulas.  All of the variables in a quantifier-free formula are called \textit{free variables}.  In the case of semi-algebraic sets, the atoms are polynomial equalities $P=0$ and strict inequalites $P>0$, where $P\in \R[X_1,\ldots,X_k]$, and the free variables are $X_1,\ldots,X_k$.  The free variables of a formula can be quantified by either the existential quantifier $\exists X_i$ or the universal quantifier $\forall X_i$, and a free variable once quantified is no longer free but a \textit{bound variable}.  

Using elementary logic, it is clear that any formula can be expressed using only the unary connective $\neg$, the binary connective $\wedge$, and the existential quantifier $\exists$.  These logical symbols have a corresponding meaning in the geometry of semi-algebraic sets, namely the set theoretic operations of complement, intersection, and projection, respectively.  For example, 
$$\{x\in \R^k|\; P(x)\neq 0\} = \R^k \smallsetminus \{x\in \R^k|\; P(x)=0\},$$ 
$$\{x\in \R^k|\; P(x)>0 \wedge Q(x)=0\}=\{x\in \R^k|\; P(x)>0\}\cap \{x\in \R^k|\; Q(x)=0\},$$ and $$\{x'\in \R^{k-1}|\; (\exists X_k)P(x',X_k)=0\}=\pi(\{x\in \R^k|\; P(x)=0\}),$$ where $\pi:\R^k\to \R^{k-1}$ is the projection which forgets the last coordinate. It is an important consequence of the Tarski-Seidenberg Theorem that when $\R$ is real closed that the projection of a semi-algebraic set in $\R^k$ is again semi-algebraic.  Said another way, any formula defined over $\R$ with quantifiers is equivalent to a quantifier-free formula in the free variables.  So, any formula where the atoms are polynomial equalities and inequalities over a real closed field defines a semi-algebraic set.

One important type of semi-algebraic set are the \textit{basic} semi-algebraic sets.  
Let $\R$ be a real closed field and suppose $S$ is a semi-algebraic subset of $\R^k$ defined by a conjunction of equalities and strict  inequalities of $k$-variate polynomials.  Such a semi-algebraic set is called a \textit{basic semi-algebraic set}.  Since any quantifier-free formula can be put into disjunctive normal form, as a disjunction of conjunctions, any semi-algebraic set is the union of basic semi-algebraic sets.  In symbols, any set of the form 
$$\{x\in \R^k|\; P(x)=0 \wedge \bigwedge_{Q\in \mathcal{Q}} Q(x)>0\}$$ is a basic semi-algebraic set, for $\mathcal{Q}$ a finite subset of $\R[X_1,\ldots,X_k]$.  


Although semi-algebraic sets are described by formulas, it is obvious that many formulas can define the same semi-algebraic set (try to think of several formulas that define the empty set).  On the other hand, one formula can describe different semi-algebraic sets over different real closed fields if one considers field extensions.
If $S$ is a semi-algebraic subset of $\R^k$ defined by $\phi$, and $\R\subseteq \R'$ is a real closed extension, then $\Ext(S,\R')$ is the semi-algebraic subset of $\R'^k$ defined by $\phi$.





\subsection{Tame geometry}
Although it will not be 
relevant until Chapter \ref{ch:semi}, we take a moment to step back to discuss an important property which semi-algebraic geometry shares with other branches of real algebraic geometry: the property for a geometry to be \textit{tame}.  
In a precise mathematical sense, the geometry of semi-algebraic sets is tame.  Bizzare geometric phenomenon like the space filling curve or the topologists \emph{sin} curve $y=\sin(1/x)$ can not occur in tame geometries.  While many quantitative results (e.g., on the Betti numbers of semi-algebraic sets, homotopy types, etc.) do not have immediate analogues in other tame geometries, many results about semi-algebraic sets can be deduced as special cases of results which hold in all tame geometries.  

In Chapter \ref{ch:semi} we will prove a result in a more general framework than the theory of semi-algebraic sets.  In particular, we will prove a result which holds over an arbitrary o-minimal structure (see Section \ref{subsec:prelim}). 

\section{Homotopy types, Betti numbers, and homology of semi-algebraic sets}

In this section we provide the definitions which we will need in Chapters \ref{ch:refined} and \ref{ch:homotopy}.

\subsection{Homotopy types}

We recall some definitions from \cite{BPRbook2} (Section 6.3).  Let $X,Y$ be two closed and bounded semi-algebraic sets and $f,g:X\to Y$ two semi-algebraic functions (a function $f:X\to Y$ is semi-algebraic if its graph $\Gamma(f)=\{(x,y)|\; f(x)=y\}$ is a semi-algebraic subset of $X\times Y$).  We say that $f$ and $g$ are \textit{semi-algebraically homotopic} if there exists a semi-algebraic and continuous function $F:X\times [0,1] \to Y$ such that $F(x,0)=f(x)$ and $F(x,1)=g(x)$.  If there exists semi-algebraic continuous maps $f:X\to Y$ and $g:Y\to X$ such that $f\circ g$ is semi-algebraically homotopic to the identity map of $Y$ and $f\circ g$ is semi-algebraically homotopic to the identity map of $X$, then we say that $X$ and $Y$ are \textit{semi-algebraically homotopy equivalent}.  It is clear that semi-algebraic homotopy equivalence is an equivalence relation.  If $X\supseteq Y$ and there exists a semi-algebraic and continuous function $h:X\times [0,1] \to X$ such that $h(-,0)$ is the identity function of $X$ and $h(-,1)$ takes values in $Y$, then we say that $h$ is a \textit{deformation retract} of $X$ to $Y$, and in this case it is clear (using techniques of elementary topology, see, e.g., \cite{munkres2000topology}) that $X$ and $Y$ are semi-algebraically homotopy equivalent.  

\subsection{Betti numbers of semi-algebraic sets}



In this section we provide a brief introduction to the Betti numbers of semi-algebraic sets, see \cite{BPRbook2} for a detailed introduction.  The definition for the Betti numbers of closed and bounded semi-algebraic set $S$ is to use the fact that any closed and semi-algebraic semi-algebraic set can be triangulated, it is homeomorphic to a simplicial complex, and then to define the homology groups of the semi-algebraic set $S$ as the simplicial homology groups of the associated simplicial complex $K$.  For locally closed semi-algebraic sets, one defines the Betti numbers using Borel-Moore homology, which has the advantage of being additive and coincides with the simplicial complex definition of Betti numbers in the case of closed semi-algebraic sets.  In order to be precise in this exposition we provide the following definitions which already appear in \cite{BPRbook2} (see in particular Sections 5.6 and 5.7, and Chapter 6).

Let $a_0,\ldots,a_d$ be affinely independent points of $\R^k$, so that they are not contained in any $d-1$ dimensional affine subspace.  The \textit{$d$-simplex} with vertices $a_0,\ldots,a_d$ is the set $$\{\lambda_0 a_0+\ldots + \lambda_d a_d |\; \sum_{i=0}^d \lambda_i=1 \text{ and } \lambda_0,\ldots,\lambda_d\geq 0\},$$
and an \textit{$e$-face} of the $d$-simplex $[a_0,\ldots,a_d]$ is any $e$-simplex $[b_0,\ldots,b_e]$ such that $$\{b_0,\ldots,b_e\}\subseteq \{a_0,\ldots,a_d\}.$$
A \textit{simplicial complex} $K$ in $\R^k$ is a finite set of simplicies in $\R^k$ such that if $s,s'\in K$ then every face of $s$ is in $K$ and $s\cap s'$ is a common face of both $s$ and $s'$.  

It is proved in \cite{BPRbook2} (Theorem 5.43) that any closed and bounded semi-algebraic set can be triangulated.

\begin{theorem}[Theorem 5.43 \cite{BPRbook2}] \label{thm:triangulation} 
Let $S\subseteq \R^k$ be a closed and bounded semi algebraic set.  Then there exists a simplicial complex $K$ of $\R^k$ and a semi-algebraic homeomorphism $h:K\to S$, called the triangulation of $S$. 
\end{theorem}

For a closed and bounded semi-algebraic set $S$ let $K$ be as in Theorem \ref{thm:triangulation}.  We define $$
b_i(S):=b_i(K)$$ where $b_i(K)$ is the rank of the simplicial homology group $H_p(K)$ as a finite vector space over $\mathbb{Q}$ (see, for example, \cite{BPRbook2} Section 6.1 for a detailed introduction to the homology of simplicial complexes) or sometimes taking $\mathbb{Z}$, $\mathbb{Z}_2$ instead as the ring of coefficients, depending on the context in which we are working. The number of connected components, the Euler characteristic, and the sum of the Betti numbers of $S$ are thus defined respectively as $$
\begin{aligned}b_0(S)&,\\ \chi(S):=&\sum_{i=0}^k (-1)^i b_i(S),\\ b(S):=&\sum_{i=0}^k b_i(S).\end{aligned}$$

For a semi-algebraic set $S$ which is closed but not necessarily bounded, we can use the conical structure at infinity of semi-algebraic sets to define the Betti numbers of $S$.  The following corollary follows immediately from the semi-algebraic triviality theorem (see Chapter 5, Theorem \ref{thm1} for an o-minimal version of the triviality theorem and \cite{BPRbook2} for a proof of the corollary).

\begin{notation}
For any $r\in \R$, we denote by $B_k(0,r)$ semi-algebraic subset of $\R^k$ defined by the inequality $x_1^2+\ldots+x_k^2\leq r^2$.  We will slightly abuse notation at times and use this notation to refer also to the semi-algebraic subset of $\R'^k$ defined by the same inequality for other real closed fields $\R'$ and $r\in \R'$, and we hope that this does not cause any confusion. 
\end{notation}

\begin{theorem}[Corollary 5.49  \cite{BPRbook2}, Conic structure at infinity] \label{thm:conic}
Let $S$ be a closed semi-algebraic subset of $\R^k$.  Then, there exists $r\in \R$, $r>0$, such that for every $r'\geq r$, there is a semi-algebraic deformation retraction from $S$ to $S_{r'}=S\cap \overline{B_k(0,r')}$ and a semi-algebraic deformation retraction from $S_{r'}$ to $S_{r}$.  

\end{theorem}
If $S$ is a closed semi-algebraic set then we define $b(S):=b(S\cap \overline{B_k(0,r)})$ for some sufficiently large $r$ satisfying the conclusion of Theorem \ref{thm:conic}, which is well defined since homology is stable under semi-algebraic homotopy equivalence (Theorem 6.42 \cite{BPRbook2}).  

Finally, we define the Betti numbers of a certain type of semi-algebraic set which we next describe and which will be an important definition of Chapter \ref{ch:refined}.  Let ${\mathcal P}$ be a finite subset of $\R[X_1,\ldots,X_k]$.
A  {\em sign condition} on
${\mathcal P}$ is an element of $\{0,1,- 1\}^{\mathcal P}$.

The {\em realization} of the sign condition $\sigma$ in a semi-algebraic
set $V \subset \R^k$ is the semi-algebraic set
\begin{equation}
\label{eqn:R(Z)}
\RR(\sigma,V)= \{x\in V\;\mid\;
\bigwedge_{P\in{\mathcal P}} \s({P}(x))=\sigma(P) \}.
\end{equation}
We define the Betti numbers of (the realization of) sign conditions as follows (following Section 6.3 of \cite{BPRbook2}).  Consider the field $\R\langle \delta \rangle$ of algebraic Puiseux series in $\delta$, and let $\overline{\Reali}(\sigma)$ be the semi-algebraic subset of $\R\langle \delta \rangle^k$ defined by $$\sum_{1\leq i\leq k} X^2_i \leq 1/\delta \wedge \bigwedge_{\sigma(P)=0} P=0 \wedge \bigwedge_{\sigma(P)=-1} P\leq -\delta \wedge \bigwedge_{\sigma(P)=1} P\geq \delta.$$ 
Note that $\overline{\Reali}(\sigma)$ is closed and bounded.  As in \cite{BPRbook2}, we define $b(\Reali(\sigma)):=b(\overline{\Reali}(\sigma))$, which is a good definition since the set $\overline{\Reali}(\sigma)$ is a semi-algebraic deformation retraction of the extension of $\Reali(\sigma)$ to $\R\langle \delta \rangle$ (Proposition 6.45 \cite{BPRbook2}) and the homology of closed and bounded semi-algebraic sets is invariant under homotopy equivalence (Theorem 6.42 \cite{BPRbook2}) and furthermore the homology groups of $S$ and those of its extension to a bigger real closed field are isomorphic.  
For an excellent and concise exposition of homology of sign conditions and semi-algebraic sets, see \cite{BPR8}.




\section{CW complexes}
We will need a few facts from the homotopy theory of
finite CW-complexes in Chapter \ref{ch:homotopy}.

We first prove a basic result about $p$-equivalences (Definition \ref{def:p-equivalence}).
It is clear that $p$-equivalence is not an equivalence relation
(e.g., for any $p\geq 0$,
the map taking $\Sphere^p$ to a point is a $p$-equivalence,
but no map from a point into $\Sphere^p$ is one).
However, we have the following.

\begin{proposition}
\label{prop:top_basic}
Let $A,B,C$ be
finite CW-complexes
with $\dim(A),\dim(B) \leq k$ and
suppose that $C$ is $p$-equivalent to $A$ and $B$ for some $p >k$.
Then, $A$ and $B$ are homotopy equivalent.
\end{proposition}

The proof of Proposition \ref{prop:top_basic} will rely on the
following well-known lemmas.


\begin{lemma}\cite[page 182, Theorem 7.16]{Whitehead}
\label{lem:top_basic1}
Let $X,Y$ be CW-complexes  and $f:X\rightarrow Y$ a $p$-equivalence. Then,
for each CW-complex $M$, $\dim(M) \leq p$,
the induced map
\[
f_*: [M,X] \rightarrow [M,Y]
\]
is surjective.
\end{lemma}

\begin{lemma}\cite[page 69]{Viro-homotopy}
\label{lem:top_basic2}
If $A$ and $B$ are finite
CW-complexes, with $dim(A) < p$ and $dim(B) \leq p$, then
every $p$-equivalence from $A$ to $B$ is a homotopy equivalence.
\end{lemma}

\begin{proof}[Proof of Proposition \ref{prop:top_basic}]
See \cite{Barone-Basu11b}.
\hide{
Suppose $f:C\rightarrow A$ and $g:C \rightarrow B$ are two $p$-equivalences.
Applying  Lemma \ref{lem:top_basic1} with $X=C$, $M=Y=A$, we have
that the homotopy class of the identity map
$1_A$ has a preimage, $[h]$,  under $f_*$,
for some $h \in [A,C]$.
Then, for each $ a \in A$, and $i \geq 0$,
\[
f_*\circ h_* : \boldsymbol{\pi}_i(A,a) \rightarrow \boldsymbol{\pi}_i(A, f\circ h(a)),
\]
is bijective.
In particular, since $f$ is a $p$-equivalence,
this implies that $h_*: \boldsymbol{\pi}_i(A,a) \rightarrow \boldsymbol{\pi}_i(C,h(a))$ is
bijective
for $0 \leq i < p$. Composing $h$ with $g$, and noting that $g$ is
also a $p$-equivalence we get that the map
$(g \circ h)_*:\boldsymbol{\pi}_i(A,a) \rightarrow
\boldsymbol{\pi}_i(B, g\circ h (a))$ is bijective
for $0 \leq i < p$.
Now, applying Lemma \ref{lem:top_basic2} we get that $g \circ h$ is a
homotopy equivalence.}
\end{proof}

\hide{
We introduce some more notation.

\begin{notation}
For any
$\R\in \mathbb{R}_+$,
we denote by $B_k(0,R) \subset \mathbb{R}^k$,
the open ball of radius $R$ centered at the origin.
\end{notation}

\begin{notation}
For
$P \in \mathbb{R}[X_1,\ldots,X_k]$,
we denote by $\Zer(P,\mathbb{R}^k)$ the real
algebraic set defined by $P=0$.
\end{notation}

\begin{notation}{\label{not:reali}}
For any first order formula $\Phi$
with $k$ free variables, we denote by $\Reali(\Phi)$ the semi-algebraic
subset of $\mathbb{R}^k$ defined by $\Phi$.
\hide{
Additionally, if $\mathcal{P}\subset \mathbb{R}[X_1,\dots,X_k]$
consists of the polynomials appearing in $\Phi$, then we call $\Phi$ a $\mathcal{P}$-formula.
}
\end{notation}

}

\chapter{Refined bounds}{\label{ch:refined}}





In this chapter\footnote{The results of this chapter have already been described in a joint work with S. Basu \cite{Barone-Basu11a}, to which we refer for some of the proofs.}
we prove that the number of semi-algebraically connected components of sign conditions on a variety is bounded singly exponentially in terms of the degree of the polynomials defining the sign condition, the degree of the polynomials defining the variety, the dimension of the variety, and the dimension of the ambient space.

Let $\R$ be a real closed field, $\mathcal{P},\mathcal{Q} \subset
\R[X_1,\ldots,X_k]$ finite subsets of polynomials,
with the degrees
of the polynomials in $\mathcal{P}$ (resp. $\mathcal{Q}$) bounded by
$d$ (resp. $d_0$).
Let $V \subset \R^k$ be the real algebraic variety
defined by the polynomials in $\mathcal{Q}$ and suppose that the real
dimension of $V$ is bounded by $k'$. We prove that the number of
semi-algebraically connected components of the realizations of all
realizable sign conditions of the family $\mathcal{P}$ on $V$ is bounded
by
$$
\displaylines{
\sum_{j=0}^{k'}4^j{s +1\choose j}F_{d,d_0,k,k'}(j),}$$
where $s = \card \; \mathcal{P}$,
and $$F_{d,d_0,k,k'}(j)=
\textstyle\binom{k+1}{k-k'+j+1} \;(2d_0)^{k-k'}d^j\; \max\{2d_0,d \}^{k'-j}
+2(k-j+1)
.$$

In case $2 d_0 \leq d$, the above bound can be written simply as
$$
\displaylines{
\sum_{j = 0}^{k'} {s+1 \choose j}d^{k'} d_0^{k-k'}  O(1)^{k}
= (sd)^{k'} d_0^{k-k'} O(1)^k
}
$$
(in this form the bound was suggested by J. Matousek \cite{Matousek_private}).
Our result improves in certain cases (when $d_0 \ll d$)
the best known bound of
$$
\sum_{1 \leq j \leq k'}
     \binom{s}{j} 4^{j}  d(2d-1)^{k-1}
$$
on the same number proved in
\cite{BPR8}
in the case  $d=d_0$.

The distinction between the bound $d_0$ on the degrees
of the polynomials defining the variety $V$ and the bound $d$ on the degrees
of the polynomials in $\mathcal{P}$ that appears in the new bound
is motivated by several applications in discrete geometry
\cite{Guth-Katz,Matousek11b,Solymosi-Tao,Zahl}.

\section{Introduction}
Let $\R$ be a real closed field.
We denote by $\C$ the algebraic closure of
$\R$.
Let ${\mathcal P}$ be a finite subset of $\R[X_1,\ldots,X_k]$.
A  {\em sign condition} on
${\mathcal P}$ is an element of $\{0,1,- 1\}^{\mathcal P}$.

The {\em realization} of the sign condition $\sigma$ in a semi-algebraic
set $V \subset \R^k$ is the semi-algebraic set

\begin{equation}
\label{eqn:R(Z)}
\RR(\sigma,V)= \{x\in V\;\mid\;
\bigwedge_{P\in{\mathcal P}} \s({P}(x))=\sigma(P) \}.
\end{equation}

More generally, given any first order formula $\Phi(X_1,\ldots,X_k)$,
the realization of $\Phi$
in a semi-algebraic
set $V \subset \R^k$ is the semi-algebraic set

\begin{equation}
\label{eqn:R(Z)}
\RR(\Phi,V)= \{x\in V\;\mid\;
\Phi(x)\}.
\end{equation}

We  denote
the set of zeros
of ${\mathcal P}$ in $\R^k$  (resp. in $\C^k$) by
$$
\displaylines{
\ZZ({\mathcal P},\R^k)=\{x\in \R^k\;\mid\;\bigwedge_{P\in{\mathcal P}}P(x)= 0\}
}
$$

$$
\displaylines{
(\mbox{resp. }
\ZZ({\mathcal P},\C^k)=\{x\in \C^k\;\mid\;\bigwedge_{P\in{\mathcal P}}P(x)= 0\}
).
}
$$

The main problem considered in this chapter is to obtain a tight bound
on the number of semi-algebraically connected components of all realizable
sign conditions of a family of polynomials
$\mathcal{P} \subset \R[X_1,\ldots,X_k]$
in a variety $\ZZ(\mathcal{Q},\R^k)$ having
dimension $k' \leq k$, in terms of $s = \card \;\mathcal{P}, k,k'$ and
the degrees of the polynomials in $\mathcal{P}$ and $\mathcal{Q}$.


\hide{
The problem of bounding the number of semi-algebraically
connected components (as well
as the higher Betti numbers) has a long history. The initial results
were obtained by  Ole{\u\i}nik and Petrovski{\u\i} \cite{OP}, and later by Thom
\cite{T} and Milnor \cite{Milnor2}, who proved a bound of $O(d)^k$
on the sum of the Betti numbers of any real algebraic variety in $\R^k$
defined by polynomials of  degree at most $d$. This result has been
generalized to arbitrary semi-algebraic sets in several different
ways. The reader is referred to \cite{BPR10} for a survey
of results in this direction and the references therein.

In \cite{PR} Pollack and Roy proved a bound of ${s \choose k}O(d)^k$
on the number of semi-algebraically connected components of the realizations of all realizable
sign conditions of a family of $s$ polynomials of degrees bounded by $d$.
The proof was based on  Ole{\u\i}nik-Petrovski{\u\i}-Thom-Milnor
bounds for algebraic sets, as well as some deformation
techniques and general position arguments. Similar results due to Alon
\cite{Alon} and
Warren \cite{Warren} only on the number of
realizable sign conditions were known before.

It was soon realized that in some applications,
notably in geometric transversal theory, as well as
in bounding the complexity of the configuration space in robotics, it is
useful to study the realizations of sign conditions of a family of
$s$ polynomials in $\R[X_1,\ldots,X_k]$ restricted to a real variety
$\ZZ(Q,\R^k)$ where the real dimension of the variety $\ZZ(Q,\R^k)$
can be much smaller than $k$. In \cite{BPR95} it was shown that the number
of semi-algebraically
connected components of the realizations of all realizable sign condition of
a family, $\mathcal{P} \subset \R[X_1,\ldots,X_k]$ of $s$ polynomials,
restricted to a real variety of dimension $k'$, where the degrees of the
polynomials in $\mathcal{P} \cup \{Q\}$ are
all bounded by $d$, is bounded by ${s \choose k'}O(d)^k$.
This last result was made more precise in \cite{BPR8}
where the authors bound (for each $i$) the sum of the
$i$-th Betti number over all
realizations of realizable sign conditions of a family of polynomials
restricted to a variety of dimension $k'$ by

$$
\displaylines{
\sum_{0 \leq j \leq k' - i}
     \binom{s}{j} 4^{j}  d(2d-1)^{k-1}.
}
$$
Notice that there is no undetermined constant in the above bound, and that
it generalizes
the bound in \cite{BPR95} which is the special case with $i=0$.
The technique of the proof uses a generalization of
the Mayer-Vietoris exact sequence in conjunction with the
 Ole{\u\i}nik-Petrovski{\u\i}-Thom-Milnor bounds on the Betti numbers of
real varieties.

In a slightly different direction,
an asymptotically tight bound
(asymptotic with respect to the number of polynomials with fixed degrees and number of variables)
$$
\sum_{\sigma\in \{0,1,-1\}^\mathcal{P}} b_0(\sigma,\R^k)\leq \frac{(2d)^k}{k!}s^k+O(s^{k-1})
$$
was proved in \cite{BPR-tight}.
This bound has the best possible leading term (as a function of $s$) but is
not optimal with regards to the dependence on $d$, and thus is not directly
comparable to the results here.
}

\subsection{Main result}

In this chapter we prove a bound on the number of
semi-algebraically
connected components over all realizable sign conditions of a family
of polynomials
in a variety.
However, unlike in the 
previously known bounds
the role of the degrees of the polynomials defining the variety $V$
is distinguished from the role of the degrees of the polynomials in
the family $\mathcal{P}$.
This added flexibility seems to be necessary in certain applications
of these bounds in combinatorial geometry (notably in the recent paper
by Solymosi and Tao \cite{Solymosi-Tao}). We give another application
in the theory of geometric permutations in Section \ref{sec:applications}.

Our main result is the following theorem.
\begin{theorem}
\label{thm:main}
Let $\R$ be a real closed field, and
let $\mathcal{Q},\mathcal{P} \subset \R[X_1,\dots,X_k]$ be
finite subsets of polynomials
such that $\deg(Q)\leq d_0$ for all $Q\in \mathcal{Q}$,
$\deg P =d_P$ for all $P\in \mathcal{P}$,
and the real dimension of $\Zer(\mathcal{Q},\R^k)$ is
$k' \leq k$. Suppose also that $\card\; \mathcal{P} = s$,
and for $\mathcal{I}\subseteq \mathcal{P}$
let ${d}_\mathcal{I}=\prod_{P\in \mathcal{I}}d_P$.
Then,
$$
\displaylines{
\sum_{\sigma \in \{0,1,-1\}^{\mathcal{P}}}
b_0(\RR(\sigma,\ZZ(\mathcal{Q},\R^k)))
}
$$
is at most
$$
\displaylines{
\sum_{{\mathcal{I}\subset \mathcal{P}}\atop {\#\mathcal{I}\leq k'}}4^{\#\mathcal{I}}\left(
\textstyle\binom{k+1}{k-k'+\#\mathcal{I}+1} \;(2d_0)^{k-k'}{d}_\mathcal{I}\;\max_{P\in \mathcal{I}}\{2d_0,d_P \}^{k'-\#\mathcal{I}}+2(k-\#\mathcal{I}+1)\right).
}
$$
In particular, if $d_P\leq d$ for all $P\in \mathcal{P}$,
we have that
$$
\displaylines{
\sum_{\sigma \in \{0,1,-1\}^{\mathcal{P}}}
b_0(\RR(\sigma,\ZZ(\mathcal{Q},\R^k)))
}
$$
is at most
$$
\displaylines{
\sum_{j=0}^{k'}4^j{s +1\choose j}\left(
\textstyle\binom{k+1}{k-k'+j+1} \;(2d_0)^{k-k'}d^j\;\max\{2d_0,d \}^{k'-j}
+2(k-j+1)\right).
}
$$
\end{theorem}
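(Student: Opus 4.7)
The plan is to extend the pseudo-critical point method of \cite{BPR8} to a setting where the polynomials defining the variety $V=\Zer(\mathcal{Q},\R^k)$ carry a separate degree bound $d_0$ from the polynomials in $\mathcal{P}$. Since $V$ has real dimension $k'$, every semi-algebraically connected component of a realization $\RR(\sigma,V)$ can be tracked by an algebraic subset of $V$ obtained by imposing, for some subset $\mathcal{I}\subseteq\mathcal{P}$ of cardinality at most $k'$ and some sign choice $\tau\in\{-1,+1\}^\mathcal{I}$, the infinitesimal equations $P=\tau(P)\,\delta$ for $P\in\mathcal{I}$. Bounding the number of connected components of each such perturbed subset and summing over $\mathcal{I}$ and $\tau$ produces the claimed sum, with the factor $4^{\#\mathcal{I}}$ reflecting two sign choices per polynomial combined with a further binary choice arising from the double infinitesimal thickening needed to separately handle the strict and weak inequalities in $\sigma$.

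The first step is the standard infinitesimal deformation to bounded smooth position. I would work in an ordered tower of infinitesimals $0<\delta\ll\epsilon\ll 1$ in a real closed extension of $\R$, replace $V$ by its intersection with a large ball defined by $\epsilon(X_1^2+\cdots+X_k^2)\le 1$, and replace each strict sign condition $P>0$ (resp.\ $P<0$) by $P\ge\delta$ (resp.\ $P\le-\delta$) while thickening $P=0$ to $-\delta\le P\le\delta$, so that the only way for a connected component of a sign condition to be created or destroyed is via the perturbed algebraic locus $V\cap\bigcap_{P\in\mathcal{I}}\{P=\tau(P)\,\delta\}$. A Mayer-Vietoris style argument essentially identical to the one in \cite{BPR8} then injects the components of $\RR(\sigma,V)$ into the disjoint union, over $\mathcal{I}$ and $\tau$, of the connected components of these perturbed subsets; the dimensional cutoff $\#\mathcal{I}\le k'$ comes from generic transversality against the $k'$-dimensional variety $V$.

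The quantitative step is then to bound the number of connected components of each such perturbed real algebraic subset, defined jointly by the polynomials in $\mathcal{Q}$ of degree at most $d_0$ and by $j=\#\mathcal{I}$ additional equations from $\mathcal{P}$ of degrees $d_P$. Replacing $\mathcal{Q}$ by the sum-of-squares $Q_*=\sum_{Q\in\mathcal{Q}}Q^2$ of degree at most $2d_0$ and applying a Morse-theoretic critical-point count with a generic linear projection yields a Bezout-type product $(2d_0)^{k-k'}\,d_\mathcal{I}\,\max_{P\in\mathcal{I}}\{2d_0,d_P\}^{k'-j}$, with the combinatorial factor $\binom{k+1}{k-k'+j+1}$ arising from the enumeration of which coordinate subspaces host the critical values, and with the additive correction $2(k-j+1)$ absorbing boundary contributions from the infinitesimal thickening. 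The main obstacle I expect is achieving this mixed-degree refinement precisely: the naive Milnor bound yields only the uniform $(2\max\{d_0,d\})^k$, whereas the stated inequality cleanly isolates the contribution of the $\mathcal{Q}$-equations as $(2d_0)^{k-k'}$ and of the $\mathcal{P}$-equations as $d_\mathcal{I}\max\{2d_0,d_P\}^{k'-j}$; this separation requires a careful stratification of critical points by the dimensions of the loci cut out by partial intersections, which is where the bulk of the technical work should lie.
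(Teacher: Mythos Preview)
Your combinatorial reduction is in the right spirit, and you correctly identify the crux: achieving the mixed-degree product $(2d_0)^{k-k'}\,d_{\mathcal I}\,\max\{2d_0,d_P\}^{k'-j}$ rather than the uniform Milnor bound. But the mechanism you propose for that step (``Morse-theoretic critical-point count with a generic linear projection yields a Bezout-type product'', followed by ``careful stratification of critical points'') is not the one that actually works here, and the paper's remarks explain why. Any approach that ultimately feeds into the Ole{\u\i}nik--Petrovski{\u\i}--Thom--Milnor bound collapses the degree information, because those bounds first replace the defining family by a single sum of squares; and a naive real Bezout count is blocked by examples such as Example~\ref{eg:counterexample}. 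So your proposed quantitative step, as written, does not deliver the separated degrees, and the ``stratification of critical points'' you gesture at is neither specified nor, as far as I can see, sufficient to recover them.

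What the paper actually does is different in kind. After replacing $\ZZ(\mathcal Q,\R^k)$ by the tube boundary $\ZZ(\Def(Q,H,\zeta),\cdot)$, it takes the polar variety $\Cr_{k-k'-1}(\Def(Q,H,\zeta))$, which consists of $k-k'$ polynomials each of degree at most $2d_0$, and then adds the $j$ perturbed equations from $\mathcal P''$. The point is that, by the choice of $H$ and the further $\eps'$-deformations, the homogenizations of these $k-k'+j$ polynomials define a \emph{non-singular complete intersection in} $\PP^k_{\C'\langle\zeta,\eps'\rangle}$ (Propositions~\ref{prop:general_pos_for_cr} and \ref{prop:general_pos}). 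For such a variety the Betti numbers are governed by the explicit recursion~(\ref{eqn:chi}) for $\chi^k_m(d_1,\dots,d_m)$, and Proposition~\ref{prop:ci_bound_NEW} bounds $|\chi^k_m|\le\binom{k+1}{m+1}d_1\cdots d_{m-1}d_m^{k-m+1}$; this is where the binomial $\binom{k+1}{k-k'+j+1}$ and the additive $2(k-m+1)$ (the off-middle Betti numbers of the complete intersection) really come from, not from coordinate-subspace enumeration or boundary corrections as you suggest. Finally the Smith inequality (Theorem~\ref{thm:smith}) transfers this complex bound to the number of real connected components. The separation of $d_0$ from the $d_P$'s is thus a direct consequence of the multidegree formula for complete intersections in $\PP^k_\C$, not of any real stratification argument. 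That pair of ideas---forcing a projective non-singular complete intersection and then invoking Smith---is the missing ingredient in your proposal.
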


\subsection{A few remarks}
\begin{remark}
The bound in Theorem \ref{thm:main} is tight (up to a factor of
$ O(1)^k$).
It is instructive to examine the two extreme cases, when $k'=0$ and $k' = k-1$
respectively. When, $k'=0$, the variety $\ZZ(\mathcal{Q},\R^k)$ is zero
dimensional,
and is a union of at most $O(d_0)^k$ isolated points. The
bound in Theorem \ref{thm:main} reduces to $O(d_0)^{k}$ in this case,
and is thus tight.

When $k' = k-1$ and $2 d_0 \leq d$,
the bound in Theorem \ref{thm:main}
is equal to
$$
\sum_{j=0}^{k-1}4^j{s +1 \choose j}\left(
\binom{k+1}{j+2}\; 2d_0d^{k-1}+2(k-j+1)\right) = d_0 O(s d)^{k-1}.
$$
The following example shows that this is the best possible (again up to
$O(1)^k$).

\begin{example}
Let $\mathcal{P}$ be the set of $s$ polynomials in $X_1,\ldots,X_{k}$ each
of which is a product of $d$ generic linear forms. Let
$\mathcal{Q} = \{Q\}$, where
$$
Q = \prod_{1 \leq i \leq d_0} (X_{k} - i).
$$
It is easy to see that in this case
the number of semi-algebraically
connected components of all realizable
strict
sign conditions
of $\mathcal{P}$
(i.e.\
sign conditions  belonging to $\{-1,+1\}^{\mathcal{P}}$)
on $\ZZ(\mathcal{Q},\R^k)$ is
equal to
$$
\displaylines{
d_0 \sum_{i = 0}^{k-1} {sd \choose i} =
d_0 (\Omega(sd))^{k-1},
}
$$
since the intersection of
$
\displaystyle{
\bigcup_{P \in \mathcal{P}}\ZZ(P, \R^{k})
}
$
with the hyperplane defined by $X_{k} = i$ for each $i$, $i= 1,\ldots,d_0$, is
homeomorphic to an union of
of $s d$ generic hyperplanes in $\R^{k-1}$, and the number of
connected components of the complement of
the union of $s d$ generic hyperplanes in $\R^{k-1}$ is precisely
$\sum_{i = 0}^{k-1} {sd \choose i}$.
\end{example}
\end{remark}

\begin{remark}
Most bounds on the number of semi-algebraically connected components
of real algebraic varieties are stated in terms of the maximum of
the degrees of the polynomials defining the variety (rather than in terms
of the degree sequence). One reason behind this is the well-known fact that a
``Bezout type'' theorem is not true for real algebraic varieties. The
number of semi-algebraically connected components (indeed even isolated
zeros) of a set of polynomials $\{P_1,\ldots,P_m \} \subset
\R[X_1,\ldots,X_k,X_{k+1}]$
with degrees $d_1,\ldots,d_m$ can
be greater than the product $d_1\cdots d_m$, as can be seen in the
following example.

\begin{example}
\label{eg:counterexample}
Let $\mathcal{P} =
\{P_1,\ldots,P_m\}\subset \R[X_1,\ldots,X_{k+1}]$ be defined as follows.
$$
\displaylines{
P_1 = \sum_{i=1}^{k} \prod_{j=1}^d (X_i - j)^2 \cr
P_j = \prod_{i=1}^{m- j+2}(X_{k+1} - i), 2\leq j \leq m.
}
$$
Let $\mathcal{P}_i = \{P_1,\ldots,P_i\}$.
Notice that for each $i, 1\leq i < m$, $\ZZ(\mathcal{P}_i,\R^{k+1})$
strictly contains $\ZZ(\mathcal{P}_{i+1},\R^{k+1})$.
Moreover, $b_0(\ZZ(\mathcal{P},\R^{k+1})) = 2d^k$, while the product of the
degrees of the polynomials in $\mathcal{P}$ is $2 d m!$. Clearly,
for $d$ large enough
$2d^k > 2 d m!$.
\end{example}
\end{remark}

\begin{remark}
Most of the previously known bounds on the Betti numbers of
realizations of sign conditions relied ultimately on the
 Ole{\u\i}nik-Petrovski{\u\i}-Thom-Milnor bounds on the Betti numbers of real
varieties. Since in the proofs of these bounds
the finite family of polynomials defining a given real variety is replaced
by a single polynomial by taking a sum of squares, it is not possible
to separate
out the different roles played by the degrees of the polynomials
in $\mathcal{P}$ and those in $\mathcal{Q}$. The technique used in our exposition avoids using the  Ole{\u\i}nik-Petrovski{\u\i}-Thom-Milnor bounds, but uses directly
classically known formulas for the Betti numbers of smooth, complete
intersections in complex projective space. The bounds obtained from these
formulas depend more delicately on the individual degrees of the polynomials
involved (see Corollary
\ref{cor:bijkm}),
and this allows us to separate the roles of $d$ and $d_0$ in our
proof.
\end{remark}

\subsection{Outline of the proof of Theorem \ref{thm:main}}
The main idea behind our improved bound is to reduce the problem of bounding
the number of semi-algebraically connected components of all sign conditions
on a variety to the problem of bounding the sum of the
$\mathbb{Z}_2$-Betti
numbers of certain smooth complete intersections in complex projective
space.
This is done as follows. First assume that
$\ZZ(\mathcal{Q},\R^k)$ is bounded. The general case is reduced to
this case by an initial step using the conical triviality of semi-algebraic
sets at infinity.

Assuming that $\ZZ(\mathcal{Q},\R^k)$ is bounded, and
letting $Q = \sum_{F \in \mathcal{Q}} F^2$,
we consider another polynomial $\Def(Q,H,\zeta)$ which is an
infinitesimal perturbation of $Q$.
The basic semi-algebraic set, $T$, defined by $\Def(Q,H,\zeta) \leq 0$
is a semi-algebraic subset of $\R\langle\zeta\rangle^k$ (where
$\R\langle\zeta\rangle$ is the field of algebraic Puiseux series
with coefficients in $\R$, see Section \ref{subsec:puiseux} below for properties of
the field of Puiseux series that we need in this chapter). The
semi-algebraic set $T$ has the property
that for each semi-algebraically
connected component $C$ of $\ZZ(Q,\R^k)$ there exists a semi-algebraically
connected component $D$ of $T$, which is bounded over $\R$ and such that
$\lim_\zeta D = C$ (see Section \ref{subsec:puiseux}
for definition of $\lim_\zeta$).
The semi-algebraic set $T$ should be thought of as an
infinitesimal ``tube'' around $\ZZ(Q,\R^k)$, which is bounded by a smooth
hypersurface (namely, $\ZZ(\Def(Q,H,\zeta),\R\langle\zeta\rangle^k)$).
We then show it is possible to cut out a $k'$-dimensional
subvariety, $W$ in $\ZZ(\Def(Q,H,\zeta),\R\langle\zeta\rangle^k)$,
such that (for generic choice of co-ordinates) in fact
$\lim_\zeta W = \ZZ(Q,\R^k)$ (Proposition
\ref{prop:criticallocusontube}),
and moreover the homogenizations of
the polynomials defining $W$
define a non-singular complete intersection in $\PP_{\C\langle\zeta\rangle}^k$
(Proposition \ref{prop:general_pos_for_cr}).
$W$ is defined by
$k-k'$ forms of
degree at most $2 d_0$.
In order to bound the number of semi-algebraically connected components
of realizations of sign conditions of the family $\mathcal{P}$ on
$\ZZ(Q,\R^k)$,
we need to bound the number of semi-algebraically
connected components of the intersection
of $W$ with the zeros of certain infinitesimal perturbations of
polynomials in $\mathcal{P}$ (see Proposition \ref{prop:main} below).
The number of cases that we need to consider
is bounded by ${O(s) \choose k'}$, and
again each such set of polynomials define
a non-singular
complete intersection of
$p,\; k-k' \leq p \leq k$
hypersurfaces in $k$-dimensional
projective space  over an appropriate algebraically closed field,
$k-k'$ of which are defined
by forms having degree at most $2 d_0$ and the remaining
$m = p - k +k'$
of degree bounded by $d$.
In this situation, there are classical formulas known for the Betti numbers
of such varieties, and they imply a bound of
$\binom{k+1}{m+1}(2d_0)^{k-k'} d^{k'}+O(k)$
on the sum of the Betti numbers of such
varieties (see Corollary \ref{cor:bijkm} below).
The bounds on the sum of the Betti numbers of these projective complete
intersections in the algebraic closure imply
using the well-known Smith inequality (see
Theorem \ref{thm:smith}) a bound on the number of semi-algebraically
connected components of the real parts of these varieties, and in particular
the number of bounded components.
The product of the two bounds,
namely the combinatorial bound on the number of different
cases and the algebraic part depending on the degrees, summed
appropriately lead to the claimed bound.

\subsection{Connection to prior work}
The idea of approximating an arbitrary real variety of
dimension $k'$ by a complete intersection was used in \cite{BPR95b} to give
an efficient algorithm for computing sample points in each semi-algebraically
connected component of all realizable
sign conditions of a family of polynomials restricted to the variety.
Because of complexity issues related to algorithmically choosing a generic
system of co-ordinates however,
instead of choosing a single generic system of co-ordinates,
a finite universal family of different co-ordinate systems
was used to approximate the variety. Since in this exposition we are
not dealing with algorithmic complexity issues, we are free to choose
generic co-ordinates.
Note also that the idea of bounding the number of semi-algebraically
connected components of realizable sign conditions or of real
algebraic varieties, using known formulas for Betti numbers of non-singular,
complete intersections in complex projective spaces, and then
using Smith inequality,
have been used before
in several different
settings (see \cite{Bas05-first-Kettner} in the case of semi-algebraic
sets defined by quadrics and \cite{Benedetti-Loeser} for arbitrary real
algebraic varieties).

The rest of the chapter is organized as follows. In Section \ref{sec:prelim},
we state some known results that we will need to prove the main theorem. These
include explicit recursive
formulas for the sum of Betti numbers of non-singular, complete
intersections of complex projective varieties
(Section \ref{subsec:Betti}), the Smith inequality
relating the Betti numbers of complex varieties defined over $\R$ with those
of their real parts (Section \ref{subsec:Smith}),
some results about generic choice of co-ordinates (Sections \ref{subsec:polar},
\ref{subsec:generic}), and finally a few facts about non-archimedean
extensions and Puiseux series that we need for making perturbations
(Section \ref{subsec:puiseux}).
We prove the main theorem in Section \ref{sec:main}.

\section{Certain Preliminaries}
\label{sec:prelim}
\subsection{The Betti numbers of a non-singular complete intersection
in complex projective space}
\label{subsec:Betti}

If $\mathcal{P}$ is a finite subset of $\R[X_1,\ldots,X_k]$ consisting of homogeneous polynomials we denote
the set of zeros
of ${\mathcal P}$ in $\PP_\R^k$ (resp. in $\PP_\C^k$) by
$$
\displaylines{
\ZZ({\mathcal P},\PP_\R^k)=\{x\in \PP_\R^k\;\mid\;\bigwedge_{P\in{\mathcal P}}P(x)= 0\}
}
$$

$$
\displaylines{
(\mbox{resp. }
\ZZ({\mathcal P},\PP_\C^k)=\{x\in \PP_\C^k\;\mid\;\bigwedge_{P\in{\mathcal P}}P(x)= 0\}
).
}
$$
For $P \in \R[X_1,\ldots,X_k]$ we will denote by $\ZZ(P,\R^k)$ (
resp. $\ZZ(P,\C^k)$, $\ZZ(P,\PP_\R^k)$, $\ZZ(P,\PP_\C^k)$) the variety
$\ZZ(\{P\},\R^k)$
(
resp. $\ZZ(\{P\},\C^k)$, $\ZZ(\{P\},\PP_\R^k)$, $\ZZ(\{P\},\PP_\C^k)$).

For any locally closed semi-algebraic set $X$,
we denote by $b_i(X)$ the dimension
of
\[
\HH_i(X,\Z_2),
\]
the $i$-th homology group of $X$
with coefficients in $\Z_2$. We refer to \cite[Chapter 6]{BPRbook2}
for the definition of homology groups in case the field $\R$ is not
the field of real numbers.
Note that $b_0(X)$ equals the number of semi-algebraically connected
components of the semi-algebraic set $X$.

For $\sigma\in \{0,1,-1\}^{\mathcal P}$ and $V \subset \R^k$ a closed
semi-algebraic set, we will denote by
$b_i(\sigma,V)$ the dimension of
\[
\HH_i(\RR(\sigma,V),\Z_2).
\]

We will denote by
\[
b(\sigma,V) = \sum_{i \geq 0} b_i(\sigma,V).
\]
\begin{definition}
A projective variety $X\subset\PP^k_{\C}$ of codimension~$n$ is a
\textit{non-singular complete intersection}
if it is the intersection of $n$ non-singular
hypersurfaces
in $\mathbb{P}^k_{\C}$
that meet transversally at each point of the
intersection.
\end{definition}

Fix  an $m$-tuple of natural numbers $\bar{d} = (d_1,\ldots,d_m)$. Let
$X_{\C} = \Zer(\{Q_1,\ldots,Q_m\},\mathbb{P}_{\C}^{k})$,
such that the degree of $Q_i$ is $d_i$,
denote a complex projective variety of
codimension~$m$ which is a non-singular complete intersection.
It is a classical fact that the Betti numbers of $X_{\C}$
depend only on the degree sequence and not on the specific $X_{\C}$.
In fact, it follows from Lefshetz theorem on hyperplane sections
(see, for example, \cite[Section 1.2.2]{Voisin2})
that
\[
b_i(X_\C) = b_i(\mathbb{P}_{\C}^{k}),
\; 0 \leq i < k - m.
\]
Also, by Poinca\'re duality we have that,
\[
b_{i} (X_\C) =
b_{2(k-m) - i}(X_\C),
\; 0 \leq i \leq k - m.
\]
Thus, all the Betti numbers of $X_\C$
are determined once we know
$b_{k-m}(X_\C)$ or equivalently
the Euler-Poinca\'re characteristic
\[
\chi(X_\C)
= \sum_{i \geq 0}
(-1)^i b_i(X_\C).
\]

Denoting $\chi(X_\C)$
by $\chi^k_m(d_1,\dots,d_m)$ (since it only depends on the degree
sequence) we have the following recurrence relation
(see for example \cite{Benedetti-Loeser}).

\begin{equation}{\label{eqn:chi}}
\chi^k_m(d_1,\dots,d_m)=
\begin{cases}
k+1 &\text{ if } m=0\\
d_1 \ldots d_m &\text{ if } m=k\\
d_m\chi^{k-1}_{m-1}(d_1,\dots,d_{m-1})-(d_m-1)\chi^{k-1}_m(d_1,\dots,d_m)
&\text{ if } 0<m<k
\end{cases}
\end{equation}

We have the following inequality.

\begin{proposition}
\label{prop:ci_bound_NEW}
Suppose $1\leq d_1\leq d_2\leq \dots \leq d_m$.  The function $\chi^k_m(d_1,\ldots,d_m)$ satisfies
$$|\chi^k_m(d_1,\ldots,d_m)|\leq
\textstyle\binom{k+1}{m+1}
d_1\dots d_{m-1} d_m^{k-m+1}.$$
\end{proposition}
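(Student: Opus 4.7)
The plan is to prove the bound by induction on $k$, using $m=0$ and $m=k$ as base cases and the recurrence \eqref{eqn:chi} for the inductive step. Both base cases are immediate verifications: when $m=0$ the left side is $k+1$ and the right side is $\binom{k+1}{1}\cdot 1 = k+1$ (with the empty product $d_1\cdots d_{m-1}$ interpreted as $1$); when $m=k$, both sides equal $d_1\cdots d_k$ since $\binom{k+1}{k+1}=1$ and $d_k^{k-m+1}=d_k$.

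For the inductive step with $0<m<k$, I would apply the triangle inequality to \eqref{eqn:chi} to obtain
$$
|\chi^k_m(d_1,\ldots,d_m)| \leq d_m\,|\chi^{k-1}_{m-1}(d_1,\ldots,d_{m-1})| + (d_m-1)\,|\chi^{k-1}_m(d_1,\ldots,d_m)|,
$$
and then feed in the inductive hypothesis for each summand. The first produces a contribution bounded by $d_m\binom{k}{m}d_1\cdots d_{m-2}\,d_{m-1}^{k-m+1}$, while the second is bounded by $(d_m-1)\binom{k}{m+1}d_1\cdots d_{m-1}\,d_m^{k-m}$. The goal is to show that the sum is at most $\binom{k+1}{m+1}d_1\cdots d_{m-1}\,d_m^{k-m+1}$.

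To reconcile the two summands with the target expression, I would invoke the monotonicity hypothesis $d_{m-1}\leq d_m$, which yields $d_m\,d_{m-1}^{k-m+1} \leq d_{m-1}\,d_m^{k-m+1}$ and so rewrites the first summand as at most $\binom{k}{m}d_1\cdots d_{m-1}\,d_m^{k-m+1}$; the bound $d_m-1\leq d_m$ rewrites the second as at most $\binom{k}{m+1}d_1\cdots d_{m-1}\,d_m^{k-m+1}$. Adding these and applying Pascal's identity $\binom{k}{m}+\binom{k}{m+1}=\binom{k+1}{m+1}$ completes the induction. There is no real obstacle here: the argument is bookkeeping, and the monotonicity assumption on the $d_i$ is precisely what allows the swap $d_m\,d_{m-1}^{k-m+1}\mapsto d_{m-1}\,d_m^{k-m+1}$ that is needed to match the shape of the claimed upper bound.
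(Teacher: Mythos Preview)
Your proposal is correct and essentially identical to the paper's own proof: the same induction on $k$ with base cases $m=0$ and $m=k$, the triangle inequality applied to the recurrence \eqref{eqn:chi}, the key swap $d_m\,d_{m-1}^{k-m+1}\leq d_{m-1}\,d_m^{k-m+1}$ via $d_{m-1}\leq d_m$, and Pascal's identity to finish. The only cosmetic difference is that the paper bounds $|d_m-1|\leq d_m$ immediately in the triangle inequality, whereas you carry the factor $d_m-1$ one line further before doing so.
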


\begin{proof}
The proof is by induction in each of the three cases of Equation \ref{eqn:chi}.

\case{$m=0$}
$$\chi^k_0=k+1\leq \textstyle\binom{k+1}{1}=k+1  $$

\case{$m=k$}
$$\chi^k_m(d_1,\ldots,d_m)=d_1\ldots d_{m-1}d_m \leq
\textstyle\binom{k+1}{k+1}
d_1\ldots d_{m-1}d_m = d_1 \ldots d_{m-1} d_m$$

\case{$0<m<k$}
$$\begin{aligned}
  |\chi^k_m(d_1,\ldots,d_m)|=&\
  |d_m\chi^{k-1}_{m-1}(d_1,\dots,d_{m-1})-(d_m-1)\chi^{k-1}_{m}(d_1,\ldots,d_m)| \\ \leq
  &\ d_m|\chi^{k-1}_{m-1}(d_1,\dots,d_{m-1})|+d_m|\chi^{k-1}_m(d_1,\dots,d_m)| \\ \leq
  &\ d_m
  \textstyle\binom{k}{m}d_1\ldots d_{m-2}d_{m-1}^{(k-1)-(m-1)+1}+d_m
  \textstyle\binom{k}{m+1}d_1\ldots d_{m-1}d_{m}^{(k-1)-m+1}\\
  \overset{\ast}{\leq}
  &\
  \textstyle\binom{k}{m}d_1\dots d_{m-1}d_{m}^{k-m+1} +
  \textstyle\binom{k}{m+1}d_1\ldots d_{m-1}d_m^{k-m+1} \\ =
  &\ \textstyle\binom{k+1}{m+1} d_1\ldots d_{m-1}d_m^{k-m+1}
\end{aligned}$$
where the inequality $\overset{\ast}{\leq}$ follows from the
observation
$$d_{m-1}^{(k-1)-(m-1)+1} \leq d_{m-1}d_m^{k-m},$$
since $d_{m-1}\leq d_m$ by assumption, and the last equality is from the identity $\binom{k+1}{m+1}=\binom{k}{m}+\binom{k}{m+1}$.

\end{proof}

Now let $\beta^k_m(d_1,\dots,d_m)$ denote
$\sum_{i \geq 0} b_i(X_\C)$.

The following corollary is an immediate consequence of
Proposition \ref{prop:ci_bound_NEW} and the remarks preceding it.

\begin{corollary}
\label{cor:bijkm}
\begin{eqnarray*}
\beta^k_m(d_1,\dots,d_m) &\leq&
\textstyle\binom{k+1}{m+1}d_1\ldots d_{m-1}d_m^{k-m+1}+2(k-m+1)
\end{eqnarray*}
\end{corollary}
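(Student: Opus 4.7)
The plan is to combine three ingredients explicitly flagged in the preceding discussion: the Lefschetz hyperplane theorem, Poincaré duality, and the bound on $|\chi^k_m|$ from Proposition \ref{prop:ci_bound_NEW}. The first two tell us that $X_\C$, being a non-singular complete intersection of complex dimension $k-m$, has the same $\Z_2$-Betti numbers as $\PP^k_\C$ outside the middle dimension. Concretely, $b_i(X_\C) = b_i(\PP^k_\C)$ for $0 \leq i < k-m$, and by Poincaré duality $b_i(X_\C) = b_{2(k-m)-i}(X_\C)$ for all $i$, so for $k-m < i \leq 2(k-m)$ we also have $b_i(X_\C) = b_{2(k-m)-i}(\PP^k_\C)$.

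The next step is to isolate the single undetermined Betti number $b_{k-m}(X_\C)$. Summing $(-1)^i b_i$ and using that $b_i(\PP^k_\C)$ is $1$ for $i$ even and $0$ for $i$ odd in the relevant range, the Euler characteristic splits as
\[
\chi(X_\C) = 2\sum_{i=0}^{k-m-1}(-1)^i b_i(\PP^k_\C) + (-1)^{k-m} b_{k-m}(X_\C).
\]
The first term is bounded in absolute value by $k-m+1$ (each $b_i(\PP^k_\C)$ is $0$ or $1$), so rearranging yields $b_{k-m}(X_\C) \leq |\chi(X_\C)| + (k-m+1)$.

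Plugging this back into the total sum gives
\[
\beta^k_m(d_1,\dots,d_m) \;=\; 2\sum_{i=0}^{k-m-1} b_i(\PP^k_\C) + b_{k-m}(X_\C) \;\leq\; 2(k-m+1) + |\chi^k_m(d_1,\dots,d_m)|,
\]
at which point Proposition \ref{prop:ci_bound_NEW} supplies $|\chi^k_m(d_1,\dots,d_m)| \leq \binom{k+1}{m+1} d_1\cdots d_{m-1} d_m^{k-m+1}$, completing the argument. I would also briefly sanity-check the boundary case $m=k$ (where $X_\C$ is a finite set of Bezout cardinality $d_1\cdots d_m$ and the bound reduces to that cardinality plus $2$) to make sure the formula is stated consistently.

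There is no real obstacle here: the argument is essentially bookkeeping, and all the non-trivial content has already been absorbed into the Lefschetz/Poincaré statements and into Proposition \ref{prop:ci_bound_NEW}. The only place to be slightly careful is the parity of $k-m$ when converting $2\sum_{i=0}^{k-m-1} b_i(\PP^k_\C) = 2\lceil (k-m)/2 \rceil$ to the uniform bound $k-m+1$, but this is immediate in either parity.
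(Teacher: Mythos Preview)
Your argument is correct and is exactly the unpacking of what the paper leaves implicit when it says the corollary is ``an immediate consequence of Proposition~\ref{prop:ci_bound_NEW} and the remarks preceding it'': you use Lefschetz plus Poincar\'e duality to reduce $\beta^k_m$ to the known off-middle Betti numbers plus $b_{k-m}$, then extract $b_{k-m}$ from $\chi$ and invoke the bound on $|\chi^k_m|$. The parity bookkeeping and the boundary case $m=k$ are handled correctly.
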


\subsection{Smith inequality}
\label{subsec:Smith}
We state a version of the
Smith inequality which plays a crucial role in the proof of the main
theorem.
Recall that for any compact topological
space equipped with an involution, inequalities derived from the {\em Smith
exact sequence} allows one to bound the {\em sum} of the Betti numbers
(with $\Z_2$ coefficients)
of the fixed point set of the involution by the sum of the Betti numbers
(again with $\Z_2$ coefficients)
of the space itself (see for instance, \cite{Viro},
p.~131).
In particular, we have for a complex projective
variety defined by real forms, with the involution taken to be
complex conjugation, the following theorem.

\begin{theorem}[Smith inequality]\label{thm:smith}
Let ${\mathcal Q} \subset \R[X_1,\ldots,X_{k+1}]$ be a family of
homogeneous polynomials.
Then,
\[
b(\Zer({\mathcal Q},\PP^k_{\R}))\le b(\Zer({\mathcal Q},\PP^k_{\textnormal{C}})).
\]
\end{theorem}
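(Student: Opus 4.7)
The plan is to deduce the inequality from the classical Smith inequality for compact Hausdorff spaces equipped with a continuous $\Z_2$-action, which asserts that $b(X^\tau) \leq b(X)$ (Betti numbers with $\Z_2$-coefficients) whenever $\tau$ is an involution on $X$ with fixed point set $X^\tau$ (see \cite{Viro}). I would apply this with $X = \Zer(\mathcal{Q},\PP^k_{\textnormal{C}})$ and the involution $\tau \colon X \to X$ given by complex conjugation of homogeneous coordinates. Since each $Q \in \mathcal{Q}$ has coefficients in $\R$, the involution preserves $X$. Moreover, a projective point is fixed by $\tau$ precisely when it admits a representative with all coordinates real: indeed, $\bar{x} = \lambda x$ forces $|\lambda|=1$, and one can then rescale by any $\mu \in \C^\ast$ with $\mu/\bar{\mu} = \lambda$ to obtain a real representative. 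Hence $X^{\tau} = \Zer(\mathcal{Q},\PP^k_{\R})$, and the classical inequality immediately yields the theorem.

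Two foundational points require care. First, since $\R$ is an arbitrary real closed field rather than necessarily the reals, one cannot apply topological Smith theory verbatim. The standard remedy is a transfer-principle reduction: both sides of the inequality are semi-algebraic $\Z_2$-Betti numbers of sets defined by finitely many polynomials, and these are invariant under real closed field extensions (see \cite[Chapter 6]{BPRbook2}); one therefore reduces to the case where $\R$ is the field of real numbers by passing to a real closure of the subfield generated by the (finitely many) coefficients of $\mathcal{Q}$ and embedding it into $\mathbb{R}$. Second, one should record that complex conjugation is semi-algebraic in the presentation $\C = \R[i]$, so the identification of the fixed point set is compatible with the semi-algebraic homology framework used throughout the chapter.

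The main obstacle is thus not the topological statement itself, which is elementary once one has a finite $\tau$-equivariant CW structure on $X$, but rather the reconciliation of classical Smith theory with the semi-algebraic homology used here. Given the semi-algebraic triangulation theorems from \cite{BPRbook2}, one can produce such a CW structure compatible with the conjugation involution, after which the Smith exact sequence and the ensuing dimension bound on $\Z_2$-cohomology apply directly. In the body of the paper this will be invoked as a black box; the only non-formal input is the identification $X^{\tau} = \Zer(\mathcal{Q},\PP^k_{\R})$ combined with the standard citation of the inequality.
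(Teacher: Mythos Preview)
Your approach is exactly the one the paper takes: it simply recalls the classical Smith inequality for a compact space with involution (citing \cite{Viro}) and specializes to complex conjugation on the complex projective variety defined by real forms, so that the fixed-point set is the real locus. The paper does not spell out the transfer-principle reduction to $\R=\mathbb{R}$ or the identification $X^{\tau}=\Zer(\mathcal{Q},\PP^k_{\R})$ that you carefully supply, so your write-up is, if anything, more complete.
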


\begin{remark}
Note that we are going to use Theorem \ref{thm:smith} only for bounding the
number of semi-algebraically connected components (that is
the zero-th Betti number) of certain real varieties. Nevertheless, to
apply the inequality we need a bound on the sum of all the Betti numbers
(not just $b_0$) on the right hand side.
\end{remark}

The following theorem
used in the proof of Theorem \ref{thm:main} is a direct consequence
of Theorem \ref{thm:smith} and the bound in Corollary
\ref{cor:bijkm}.

\begin{theorem}
\label{thm:main2}
Let $\R$ be a real closed field and
$\mathcal{P} = \{P_1,\ldots,P_m\}
\subset \R[X_1,\ldots,X_k]$ with $\deg(P_i) = d_i, i=1,\ldots,m$,
and
$1\leq
d_1 \leq d_2 \leq \cdots \leq d_m$.
Let $\mathcal{P}^h = \{P_1^h,\ldots,P_m^h\}$, and
suppose
that
$P_1^h,\ldots,P_m^h$ define a non-singular complete intersection in
$\PP_\C^k$. Then,
$$
\displaylines{
b_0(\ZZ(\mathcal{P}^h,\PP_\R^k)) \leq
 \textstyle\binom{k+1}{m+1}
d_1 \cdots d_{m-1} d_m^{k-m+1}+2(k-m+1).
}
$$
In case $\ZZ(\mathcal{P},\R^k)$ is bounded,
$$
\displaylines{
b_0(\ZZ(\mathcal{P},\R^k)) \leq
\textstyle\binom{k+1}{m+1}d_1 \cdots d_{m-1} d_m^{k-m+1}+2(k-m+1).
}
$$
\end{theorem}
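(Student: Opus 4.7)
The plan is to derive both inequalities as direct consequences of Corollary \ref{cor:bijkm} and Theorem \ref{thm:smith}, combined with one topological separation argument to pass from the projective to the affine statement.

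For the projective bound, I would first apply Corollary \ref{cor:bijkm} to the non-singular complete intersection $X_{\C}=\ZZ(\mathcal{P}^h,\PP_{\C}^{k})$ to obtain
$$
b(\ZZ(\mathcal{P}^h,\PP_{\C}^{k})) \;=\; \beta^{k}_{m}(d_1,\ldots,d_m) \;\leq\; \textstyle\binom{k+1}{m+1}d_1\cdots d_{m-1}d_{m}^{k-m+1}+2(k-m+1).
$$
Since all $P_i^h$ have coefficients in $\R$, the variety $X_{\C}$ is defined over $\R$ and carries the involution of complex conjugation, whose fixed-point set is $\ZZ(\mathcal{P}^h,\PP_{\R}^{k})$. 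Applying Theorem \ref{thm:smith} yields $b(\ZZ(\mathcal{P}^h,\PP_{\R}^{k})) \leq b(\ZZ(\mathcal{P}^h,\PP_{\C}^{k}))$, and since $b_0(X)\leq b(X)$ for any semi-algebraic set, the first claimed inequality follows immediately.

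For the affine case, assuming $\ZZ(\mathcal{P},\R^{k})$ is bounded, I would identify $\R^{k}$ with the standard affine chart of $\PP_{\R}^{k}$. Boundedness of $\ZZ(\mathcal{P},\R^{k})$ implies (semi-algebraically) that it is closed in $\PP_{\R}^{k}$ and disjoint from the hyperplane at infinity $H_{\infty}$. Hence the projective variety decomposes as a disjoint union of two closed semi-algebraic sets,
$$
\ZZ(\mathcal{P}^h,\PP_{\R}^{k}) \;=\; \ZZ(\mathcal{P},\R^{k}) \;\sqcup\; \bigl(\ZZ(\mathcal{P}^h,\PP_{\R}^{k})\cap H_{\infty}\bigr),
$$
so each piece is also open in the projective variety. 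Consequently every semi-algebraically connected component of $\ZZ(\mathcal{P}^h,\PP_{\R}^{k})$ lies entirely in one of the two pieces, which gives $b_0(\ZZ(\mathcal{P},\R^{k}))\leq b_0(\ZZ(\mathcal{P}^h,\PP_{\R}^{k}))$ and therefore the second bound.

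There is no real obstacle here; the statement is essentially a packaging result. The only point that deserves care is the last separation argument, which uses boundedness in an essential way (otherwise the projective closure could glue affine components through $H_{\infty}$ and reverse the inequality). Over a general real closed field this is handled in the semi-algebraic sense, using the standard identification of the affine chart and the semi-algebraic notions of closedness, as developed in \cite[Chapter 6]{BPRbook2}.
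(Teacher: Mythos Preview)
Your proposal is correct and follows exactly the route the paper takes: the paper's proof is the single line ``immediate from Theorem \ref{thm:smith} and Corollary \ref{cor:bijkm}'', and you have simply spelled out the chain $b_0(\ZZ(\mathcal{P}^h,\PP_\R^k))\le b(\ZZ(\mathcal{P}^h,\PP_\R^k))\le b(\ZZ(\mathcal{P}^h,\PP_\C^k))=\beta^k_m(d_1,\ldots,d_m)$ and the passage to the affine case via boundedness. Your separation argument for the affine statement is the natural one and is implicitly what the paper intends.
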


\begin{proof}
Proof is immediate from Theorem \ref{thm:smith} and Corollary \ref{cor:bijkm}.
\end{proof}

\begin{remark}
Note that the bound in Theorem \ref{thm:main2} is not true
if we omit the
assumption of being a non-singular complete intersection. A
counter-example is provided by Example \ref{eg:counterexample}.
\end{remark}

\begin{remark}
Another possible approach to the proof of Theorem \ref{thm:main2}
is to use the  critical point method and bound directly the number of
critical points of a generic projection
using the multi-homogeneous B\'ezout theorem (see for example
\cite{Safey}).
\end{remark}

\subsection{Generic coordinates}
\label{subsec:generic}
Unless otherwise stated, for any real closed field $\R$,
we are going to use the Euclidean topology
(see, for example, \cite[page 26] {BCR}) on $\R^k$.
Sometimes we will need to use (the coarser) Zariski topology, and
we explicitly state this whenever it is the case.

\begin{notation}
For a real algebraic set $V=\Zer(Q,\R^k)$
we let $\textnormal{reg} V$ denote the non-singular points
in dimension $\dim V$ of $V$
(\cite[Definition 3.3.9]{BCR}).
\end{notation}

\begin{definition}
Let $V=\Zer(Q,\R^k)$ be a real algebraic set.
Define $V^{(k)}=V$, and for $0\leq i \leq k-1$ define
$$V^{(i)}=V^{(i+1)}\setminus \textnormal{reg } V^{(i+1)}.$$
Set $\dim V^{(i)}=d(i)$.
\end{definition}

\begin{remark}{\label{rem:Vi}}
Note that $V^{(i)}$ is
Zariski closed for each $0\leq i \leq k$.
\end{remark}

\begin{notation}
We denote by $\textnormal{Gr}_\R(k,j)$ the real Grassmannian of $j$-dimensional
linear subspaces of  $\R^k$.
\end{notation}

\begin{notation}
For a real algebraic variety $V \subset \R^k$, and
$x \in \textnormal{reg } V $ where $\dim \textnormal{reg } V = p$, we denote by $T_x V$
the tangent space at $x$ to $V$ (translated to the origin). Note
that $T_x V$ is a $p$-dimensional subspace of $\R^k$, and hence
an element
of $\textnormal{Gr}_\R(k,p)$.
\end{notation}

\begin{definition}{\label{def:good}}
Let $V=\Zer(Q,\R^k)$ be a real algebraic set,
$1\leq j \leq k$, and
$\ell\in \textnormal{Gr}(k,k-j)$.  We say the linear space $\ell$ is
\textit{$j$-good} with respect to $V$ if either:
\blist
\item $j\notin d([0,k])$, or
\item $d(i)=j$,
and
$$
A_\ell:=\{x\in \textnormal{reg } V^{(i)}|\
\dim(\T_x V^{(i)} \cap \ell) =0\}
$$
is a non-empty dense Zariski open subset of $\textnormal{reg } V^{(i)}$.
\elist
\end{definition}

\begin{remark}{\label{rem:Aell}}
Note that the semi-algebraic subset $A_\ell$ is always a
(possibly empty) Zariski open subset of $\textnormal{reg } V^{(i)}$,
hence of $V^{(i)}$.  In the case where $V^{(i)}$ is an
irreducible Zariski closed subset (see Remark \ref{rem:Vi}),
the set $A_\ell$ is either empty or a non-empty dense
Zariski open subset of $\textnormal{reg } V^{(i)}$.
\end{remark}

\begin{definition}
Let $V=\Zer(Q,\R^k)$ be a real algebraic set and
$\mathcal{B}=\{v_1,\dots,v_k\}\subset \R^k$
a basis of $\R^k$.  We say that the basis
$\mathcal{B}$ is \textit{good} with respect to $V$
if for each $j$, $1\leq j \leq k$,
the linear space
$\textnormal{span}\{v_1,\dots,v_{k-j}\}$ is $j$-good.
\end{definition}

\begin{proposition}
{\label{prop:generic}}
Let $V=\Zer(Q,\R^k)$ be a real algebraic set and
$\{v_1,\dots,v_k\}\subset \R^k$ a basis of $\R^k$.
Then, there exists a non-empty open
semi-algebraic subset
of linear transformations
$\mathcal{O}\subset \textnormal{GL}(k,\R)$ such that for every $T\in \mathcal{O}$ the basis
$\{T(v_1),\dots,T(v_k)\}$ is good with respect to $V$.
\end{proposition}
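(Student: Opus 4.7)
The plan is to prove Proposition \ref{prop:generic} by a standard transversality-and-pullback argument. For each $j\in\{1,\ldots,k\}$ I will produce a non-empty Zariski open subset $\mathcal{U}_j\subset\textnormal{Gr}_\R(k,k-j)$ consisting entirely of $j$-good subspaces, and then set
$$
\mathcal{O}=\bigcap_{j=1}^{k}\Phi_j^{-1}(\mathcal{U}_j),
$$
where $\Phi_j:\textnormal{GL}(k,\R)\to\textnormal{Gr}_\R(k,k-j)$ is the algebraic map $T\mapsto \textnormal{span}\{T(v_1),\ldots,T(v_{k-j})\}$, which is surjective because $\textnormal{GL}(k,\R)$ acts transitively on $\textnormal{Gr}_\R(k,k-j)$. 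Each preimage $\Phi_j^{-1}(\mathcal{U}_j)$ is then a non-empty Zariski open subset of $\textnormal{GL}(k,\R)$, and since $\textnormal{GL}(k,\R)$ is Zariski irreducible the finite intersection $\mathcal{O}$ is a non-empty Zariski open subset, a fortiori a non-empty open semi-algebraic subset.

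To construct $\mathcal{U}_j$, I handle the trivial case $j\notin d([0,k])$ by taking $\mathcal{U}_j=\textnormal{Gr}_\R(k,k-j)$. Otherwise let $i$ be the (unique) index with $d(i)=j$, uniqueness following from the strict inequality $d(i)<d(i+1)$ built into the definition of the sequence $V^{(i)}$; by Remark \ref{rem:Vi} the set $V^{(i)}$ is Zariski closed of dimension $j$, with finitely many top-dimensional irreducible components $W_1,\ldots,W_r$ whose smooth loci comprise $\textnormal{reg}\,V^{(i)}$. For each such $W=W_s$ I would introduce the incidence variety
$$
I_W=\{(x,\ell)\in(\textnormal{reg}\,V^{(i)}\cap W)\times\textnormal{Gr}_\R(k,k-j):\dim(T_xV^{(i)}\cap\ell)\geq 1\}.
$$
The fibre over a fixed $x$ is the special Schubert hypersurface of $(k-j)$-planes meeting the fixed $j$-plane $T_xV^{(i)}$ non-trivially, which has codimension $1$ in $\textnormal{Gr}_\R(k,k-j)$, so $\dim I_W\le j+j(k-j)-1$. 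Now consider the projection $\pi_W:I_W\to\textnormal{Gr}_\R(k,k-j)$: if it is not dominant, the complement of its image is a non-empty Zariski open set of $\ell$ for which $A_\ell\cap W$ equals all of $\textnormal{reg}\,V^{(i)}\cap W$; if it is dominant, the generic fibre has dimension $\le j-1<\dim W$, so the bad locus inside $W$ is a proper Zariski closed subset over a non-empty Zariski open subset of $\textnormal{Gr}_\R(k,k-j)$. In either case a non-empty Zariski open $\mathcal{U}_W\subset\textnormal{Gr}_\R(k,k-j)$ exists on which $A_\ell\cap W$ is a non-empty dense Zariski open subset of $\textnormal{reg}\,V^{(i)}\cap W$; setting $\mathcal{U}_j=\bigcap_s\mathcal{U}_{W_s}$ gives the required subset.

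The main technical step is the dimension count that governs $\pi_W$, which rests on the classical fact that the special Schubert variety $\{\ell:\ell\cap T\neq 0\}$ attached to a fixed $j$-plane $T$ is a hypersurface in $\textnormal{Gr}_\R(k,k-j)$; everything downstream reduces to comparing $\dim I_W\le j(k-j)+j-1$ with $\dim\textnormal{Gr}_\R(k,k-j)=j(k-j)$. A secondary subtlety I expect to address explicitly is the possible reducibility of $\textnormal{reg}\,V^{(i)}$, so that the word ``dense'' in Definition \ref{def:good} is interpreted as meeting each top-dimensional irreducible component; the component-by-component construction above handles this via a finite intersection. All remaining issues are formal, relying only on the Zariski irreducibility of $\textnormal{GL}(k,\R)$ and the transitivity of its action on each Grassmannian.
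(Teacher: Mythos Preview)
Your argument is correct and constitutes a genuinely different route from the paper's. The paper proceeds by contradiction: it supposes an open semi-algebraic set $U\subset\textnormal{Gr}_\R(k,k-j)$ of $\ell$ that are not $j$-good, reduces to a single irreducible component $V^{(i)}_r$, picks any $z\in\textnormal{reg}\,V^{(i)}_r$, and observes that then $T_z V^{(i)}\in\bigcap_{\ell\in U}\Omega(\ell)$, contradicting Lemma~\ref{lem:empty}. That lemma, in turn, is proved by an explicit construction (due to Chistov et al.) of a finite family of $(k-j)$-planes, arbitrarily close to any given one, such that every $j$-plane is transverse to at least one of them.

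Your approach replaces this contradiction-plus-explicit-family argument with a direct incidence-variety dimension count: the Schubert hypersurface structure of each fibre over $x$ gives $\dim I_W\le j(k-j)+j-1$, and the semi-algebraic fibre-dimension formula then forces the locus of $\ell$ with $\dim\pi_W^{-1}(\ell)\ge j$ to have dimension at most $j(k-j)-1$, hence to miss a dense open set. This is a cleaner, more standard algebro-geometric argument and yields the marginally sharper conclusion that the bad locus has positive codimension rather than merely empty interior. The paper's approach, by contrast, is more elementary in that it avoids any fibre-dimension theorem, and its key Lemma~\ref{lem:empty} has independent algorithmic content. One small point of care in your write-up: the word ``Zariski'' in ``non-empty Zariski open $\mathcal{U}_W$'' needs Chevalley-type input (upper semicontinuity of fibre dimension gives a constructible bad set, whose Zariski closure has the same dimension); alternatively, working throughout with semi-algebraic open \emph{dense} sets suffices, since finite intersections of such are still non-empty, and that is all the proposition requires.
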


The proof of Proposition \ref{prop:generic} uses the following
notation and lemma.

\begin{notation}
For any $\ell\in \textnormal{Gr}_\R(k,k-j)$, $1\leq j \leq k$, we denote
by $\Omega(\ell)$
the real algebraic subvariety of $\textnormal{Gr}_\R(k,j)$ defined by
$$\Omega(\ell)=\{\ell'\in \textnormal{Gr}_\R(k,j)| \ \ell \cap \ell'\neq 0\}.$$
\end{notation}

\begin{lemma}
\label{lem:empty}
For any non-empty open
semi-algebraic subset
$U\subset \textnormal{Gr}_\R(k,k-j)$, $1\leq j \leq k$,
we have
$$\bigcap_{\ell\in U} \Omega (\ell) = \emptyset.$$
\end{lemma}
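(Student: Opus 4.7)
The plan is to prove the contrapositive: show that any fixed $\ell' \in \textnormal{Gr}_\R(k,j)$ fails to lie in $\Omega(\ell)$ for \emph{some} $\ell \in U$, so no $\ell'$ lies in all of them. Concretely, I will fix an arbitrary $\ell' \in \textnormal{Gr}_\R(k,j)$ and show that the set
\[
S(\ell') := \{\ell \in \textnormal{Gr}_\R(k,k-j) \mid \ell \cap \ell' \neq 0\}
\]
is a proper Zariski-closed subset of $\textnormal{Gr}_\R(k,k-j)$. Granting this, the complement $\textnormal{Gr}_\R(k,k-j) \setminus S(\ell')$ is a non-empty Zariski-open subset of the irreducible variety $\textnormal{Gr}_\R(k,k-j)$, hence Euclidean-dense; it therefore meets the non-empty Euclidean-open subset $U$, yielding some $\ell \in U$ with $\ell \cap \ell' = 0$, i.e.\ $\ell' \notin \Omega(\ell)$. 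This contradicts $\ell' \in \bigcap_{\ell \in U}\Omega(\ell)$, so the intersection is empty.

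For the Zariski-closedness, I would argue as follows. Pick a basis $v_1,\ldots,v_j$ of $\ell'$. Given $\ell \in \textnormal{Gr}_\R(k,k-j)$ represented locally by a basis $w_1,\ldots,w_{k-j}$, the condition $\ell \cap \ell' = 0$ is equivalent to the $k \times k$ matrix $M(\ell,\ell')$ with columns $v_1,\ldots,v_j,w_1,\ldots,w_{k-j}$ being non-singular, since $\dim \ell + \dim \ell' = k$. On any standard affine chart of $\textnormal{Gr}_\R(k,k-j)$ we can take the $w_i$'s to depend polynomially on the chart coordinates, so $\det M(\ell,\ell')$ is a polynomial in those coordinates; its vanishing locus is the local equation for $S(\ell')$, which is therefore Zariski-closed in $\textnormal{Gr}_\R(k,k-j)$. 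Properness of $S(\ell')$ follows from the existence of a complementary subspace to $\ell'$ (any $\ell$ spanned by vectors projecting to a basis of $\R^k/\ell'$).

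The only step with any subtlety is the passage from ``$S(\ell')$ is a proper Zariski-closed subset'' to ``its complement meets the non-empty Euclidean-open set $U$''. This uses the standard fact that $\textnormal{Gr}_\R(k,k-j)$ is an irreducible real algebraic variety, so any non-empty Zariski-open subset is dense in the Euclidean topology (see, e.g., \cite[Proposition 3.3.7]{BCR} applied to each affine chart, together with the connectedness/irreducibility of the Grassmannian). Once this is granted, the proof is complete; I expect this topological density statement to be the only point that needs to be cited carefully, as the rest is a one-line linear algebra observation.
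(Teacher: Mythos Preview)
Your argument is correct. Fixing $\ell'\in\textnormal{Gr}_\R(k,j)$, the set $S(\ell')$ is indeed the zero locus of the single polynomial $\det M(\ell,\ell')$ on each standard affine chart $\R^{j(k-j)}$ of $\textnormal{Gr}_\R(k,k-j)$; since a complementary subspace to $\ell'$ exists, this polynomial is not identically zero, so $S(\ell')$ has empty Euclidean interior and its complement is Euclidean-dense. Any non-empty Euclidean-open $U$ therefore meets this complement, giving $\ell\in U$ with $\ell'\notin\Omega(\ell)$. (You do not even need irreducibility of the Grassmannian as such: it suffices that on each chart a nonzero polynomial has nowhere-dense zero set, and that some chart meets $U$.)

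This is, however, a genuinely different route from the paper's proof (the argument referenced from \cite{Barone-Basu11a}). There the approach is constructive: one writes down an explicit one-parameter family of $(k-j)$-planes $\ell_x=\mathrm{span}\{v_k(x),\ldots,v_k(x+k-j-1)\}$ built from the moment-curve vectors $v_k(x)=\sum_{i=0}^{k-1}x^ie_i$, and shows via a Vandermonde-type argument that the finite collection $L_{\eps,k,j}=\{\ell_{m\eps}\mid 0\le m\le k(k-j)\}$ already satisfies $\bigcap_m\Omega(\ell_{m\eps})=\emptyset$. A rotation places $\ell_0$ in $U$, and shrinking $\eps$ forces all of $L_{\eps,k,j}$ into $U$. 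Your dimension/density argument is shorter and more conceptual; the paper's construction, by contrast, produces an explicit finite witnessing family of bounded size, which is what one wants in the algorithmic settings (cf.\ \cite{BPR95b}, \cite[Proposition~13.27]{BPRbook2}) that originally motivated this lemma.
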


\begin{proof}
See Lemma 2.19 of \cite{Barone-Basu11a}.
\hide{
We use a technique due to Chistov et al. (originally appearing in \cite{K})
who explicitly constructed a finite family of elements in
$\textnormal{Gr}_\R(k,k-j)$ such that every $\ell' \in \textnormal{Gr}_\R(k,j)$ is transversal
to at least one member of this family.
More precisely, let
$e_0,\ldots,e_{k-1}$ be the standard basis vectors in $\R^k$, and
let for any $x\in \R$,
$$
\displaylines{
v_k(x) = \sum_{i=0}^{k-1}{x^i} e_i.
}
$$

Then the set of vectors $v_{k}(x),v_k(x+1),\ldots,v_k(x+k-j-1)$ are
linearly independent and span a $(k-j)$-dimensional subspace of $\R^k$.
Denote by $\ell_x$ the corresponding element in $\textnormal{Gr}_\R(k,k-j)$. An easy
adaptation of the proof of Proposition 13.27 \cite{BPRbook2}
now shows that, for $\eps>0$, the set
\[
L_{\eps,k,j} := \{\ell_{m\eps} | 0 \leq m \leq k(k-j) \} \subset \textnormal{Gr}(k,k-j),
\]
has the property that for any $\ell' \in \textnormal{Gr}_\R(k,j)$, there exists
some $m$, $0 \leq m\leq k(k-j)$, such that $\ell' \cap \ell_{m \eps} = 0$.
In other words, for every $\eps > 0$,
$$
\displaylines{
\bigcap_{0 \leq m \leq k(k-j)} \Omega(\ell_{m\eps}) = \emptyset.
}
$$
By rotating co-ordinates we can assume that $\ell_0 \in U$, and
then by choosing $\eps$ small enough we can assume that
$L_{\eps,k,j} \subset U$. This finishes the proof.
}
\end{proof}

\begin{proof}[Proof of Proposition \ref{prop:generic}]
We prove that for each $j, 0 \leq j \leq k$,
the set of $\ell \in \textnormal{Gr}_\R(k,k-j)$ such that $\ell$ is not
$j$-good for $V$ is a semi-algebraic
subset of $\textnormal{Gr}_\R(k,k-j)$ without interior.
It then follows that its complement contains a
non-empty
open
dense
semi-algebraic
subset of $\textnormal{Gr}_\R(k,k-j)$, and hence there is a
non-empty
 open
semi-algebraic
subset
$\mathcal{O}_j \subset \textnormal{GL}(k,\R)$ such that for each $T \in \mathcal{O}_j$,
the linear space
$\textnormal{span}\{T(v_1),\dots,T(v_{k-j})\}$ is $j$-good with respect to $V$.

Let $j=d(i)$, $0\leq i \leq k$. Seeking a contradiction,
suppose that there is an open
semi-algebraic
subset
$U\subset \textnormal{Gr}_\R(k,k-j)$ such that every $\ell\in U$ is not $j$-good
with respect to $V$.
Let
$V^{(i)}_1,\dots,V^{(i)}_n$
be the
distinct
irreducible components of the Zariski closed set $V^{(i)}$.
For each $\ell \in U$,
$\ell$ is not $j$-good for some
$V^{(i)}_r$,
$1\leq r\leq n$ (otherwise $\ell$ would be $j$-good
for $V
$).
Let $U_1,\dots,U_n$ denote
the semi-algebraic sets defined by
$$U_r:=\{\ell\in U| \ \ell \text{ is not } j\text{-good for } V^{(i)}_r\}.$$
We have $U=U_1\cup \dots \cup U_n$, and $U$ is open in $\textnormal{Gr}_\R(k,k-j)$.
Hence,
for some $r$, $1\leq r \leq n$, we have $U_r$
contains an non-empty open
semi-algebraic
subset.
Replacing $U$ by this (possibly smaller) subset we have that
the set $A_\ell\cap \textnormal{reg } V^{(i)}_r$ is empty
for each $\ell \in U$ (cf. Definition \ref{def:good}, Remark \ref{rem:Aell}).
So,
$\textnormal{reg }V^{(i)}_r\subset \textnormal{reg } V^{(i)}\setminus A_\ell$
for every
$\ell \in U$,
and $$
\emptyset \neq \textnormal{reg } V^{(i)}_r\subset \bigcap_{\ell\in U} \textnormal{reg } V^{(i)}\setminus A_\ell.$$
Let
$\displaystyle{z\in \bigcap_{\ell\in U} \textnormal{reg } V^{(i)}\setminus A_\ell}$, but then
the linear space $\ell' = T_z (\textnormal{reg } V^{(i)})$
is in
$\displaystyle{\bigcap_{\ell \in U} \Omega(\ell)}$,
contradicting Lemma \ref{lem:empty}.
\end{proof}

\subsection{Non-singularity of the set critical of points of
hypersurfaces for generic projections}
\label{subsec:polar}
\begin{notation}
Let $H \in \R[X_1,\ldots,X_k]$. For $0 \leq p \leq k$, we will denote by
$\Cr_p(H)$ the set of polynomials
\[
\{
H, \frac{\partial H}{\partial X_1},\ldots, \frac{\partial H}{\partial X_p}
\}.
\]
We will denote by $\Cr_p^h(H)$ the corresponding set
\[
\{
H^h, \frac{\partial H^h}{\partial X_1},\ldots,
\frac{\partial H^h}{\partial X_p}
\}
\]
of homogenized polynomials.
\end{notation}

\begin{notation}
Let $d$ be even. We will denote by
$\Pos_{\R,d,k} \subset \R[X_1,\ldots,X_k]$
the set of non-negative polynomials in $\R[X_1,\ldots,X_k]$
of degree at most $d$.
Denoting by $\R[X_1,\ldots,X_k]_{\leq d}$
the finite dimensional
vector subspace of $\R[X_1,\ldots,X_k]$ consisting of polynomials of degree
at most $d$, we have that $\Pos_{\R,d,k}$ is a (semi-algebraic) cone in
$\R[X_1,\ldots,X_k]_{\leq d}$ with non-empty interior.
\end{notation}

\begin{proposition}
\label{prop:generic_polar}
Let $\R$ be a real closed field and $\C$ the algebraic closure of $\R$.
Let $d>0$ be even. Then there exists $H \in \Pos_{\R,d,k}$, such that
for each $p,\; 0 \leq p \leq k$, $\Cr_p^h(H)$ defines a non-singular complete
intersection in $\PP^k_\C$.
\end{proposition}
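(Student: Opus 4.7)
The strategy is a standard transversality/Bertini argument, combined with the observation that $\Pos_{\R,d,k}$ has non-empty Euclidean interior in its ambient vector space $\R[X_1,\ldots,X_k]_{\leq d}$. For each $p$ with $0 \leq p \leq k$, set
\[
U_p = \{ H \in \R[X_1,\ldots,X_k]_{\leq d} \mid \Cr_p^h(H) \text{ is a non-singular complete intersection in } \PP^k_\C \}.
\]
First I would observe that $U_p$ is Zariski open: the failure of $\Cr_p^h(H)$ to cut out a non-singular complete intersection of codimension $p+1$ in $\PP^k_\C$ is a closed condition, cut out by the vanishing of a suitable multi-resultant together with the vanishing of appropriate minors of the Jacobian along the common zero set, both of which translate into polynomial equations in the coefficients of $H$.

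The main obstacle is showing each $U_p$ is non-empty. I would use a parametric transversality argument. Working over $\C$, consider the evaluation map
\[
\Phi_p : \C[X_1,\ldots,X_k]_{\leq d} \times (\C^{k+1}\setminus\{0\}) \to \C^{p+1},\quad (H,x) \mapsto ( H^h(x),\; \partial H^h / \partial X_1 (x),\; \ldots,\; \partial H^h / \partial X_p (x) ).
\]
The key computation is that at every $x \neq 0$, the partial differential of $\Phi_p$ in the $H$-direction alone is surjective onto $\C^{p+1}$: working in a chart with $X_j(x) \neq 0$, one can produce $p+1$ monomials in $H$ (for instance $X_j^d$ together with $X_j^{d-1} X_i$ for $i = 1,\ldots,p$, after minor adjustment when $j \in \{1,\ldots,p\}$) whose coefficient variations yield linearly independent vectors in $\C^{p+1}$. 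Hence $\Phi_p$ is a submersion along $\Phi_p^{-1}(0)$, and parametric transversality (Sard / generic smoothness) gives that for a generic $H \in \C[X_1,\ldots,X_k]_{\leq d}$ the fiber $\Zer(\Cr_p^h(H),\PP^k_\C)$ is either empty (when $p = k$) or smooth of codimension exactly $p+1$ in $\PP^k_\C$, i.e., a non-singular complete intersection. Since $U_p$ is defined by the non-vanishing of polynomials with real coefficients and $\R^N$ is Zariski dense in $\C^N$, non-emptiness over $\C$ yields non-emptiness over $\R$.

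Consequently, $U = \bigcap_{p=0}^{k} U_p$ is a non-empty Zariski open subset of $\R[X_1,\ldots,X_k]_{\leq d} \cong \R^N$, and its complement has empty Euclidean interior. Since $\Pos_{\R,d,k}$ has non-empty Euclidean interior (for example, $1 + X_1^d + \cdots + X_k^d$ lies in the interior when $d$ is even, as its leading form is positive definite and it is bounded below by $1$), the Euclidean-open dense set $U$ must meet the interior of $\Pos_{\R,d,k}$. Any $H \in U \cap \Pos_{\R,d,k}$ then satisfies the conclusion of the proposition.
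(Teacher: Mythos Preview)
Your high-level strategy---prove that the good locus $U=\bigcap_{p}U_p$ is a non-empty Zariski open subset of $\R[X_1,\ldots,X_k]_{\leq d}$ and then intersect it with the non-empty Euclidean interior of $\Pos_{\R,d,k}$---coincides with the paper's. The difference is that the paper does not carry out the genericity step itself: it simply invokes the known fact that generic polar varieties of a non-singular complex hypersurface are non-singular complete intersections (citing \cite[Proposition~3]{Bank97}) and then uses that $\Pos_{\R,d,k}$ has non-empty interior. Your argument attempts a self-contained proof of that cited fact via parametric transversality; the pieces concerning Zariski openness of $U_p$, non-empty interior of $\Pos_{\R,d,k}$, and passage from $\C$ to $\R$ are fine.

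The transversality step, however, has a real gap. Your claim that the $H$-directional differential of $\Phi_p$ is surjective at \emph{every} $x\in\C^{k+1}\setminus\{0\}$ is false. Writing $X_0$ for the homogenizing variable, Euler's identity $d\,H^h=\sum_{i=0}^{k}X_i\,\partial H^h/\partial X_i$ forces, at any $x$ with $x_0=x_{p+1}=\cdots=x_k=0$,
\[
d\,H^h(x)\;=\;\sum_{i=1}^{p}x_i\,\frac{\partial H^h}{\partial X_i}(x)\qquad\text{for every }H,
\]
so the image of the linear map $H\mapsto\Phi_p(H,x)$ lies in a fixed hyperplane of $\C^{p+1}$. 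This is precisely the case ``$j\in\{1,\ldots,p\}$'' you flagged as needing a minor adjustment; no choice of monomials can overcome the Euler relation there. The gap is repairable: your monomial computation \emph{does} give surjectivity whenever some $x_j\neq 0$ with $j\in\{0,p+1,\ldots,k\}$, so generic smoothness yields the conclusion away from the linear subspace $L=\{X_0=X_{p+1}=\cdots=X_k=0\}$ (of projective dimension $p-1$), and a second, easier transversality/dimension count shows that for generic $H$ the locus $\Zer(\Cr_p^h(H),\PP^k_\C)$ does not meet $L$. With that patch your argument goes through.
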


\begin{proof}[Proof]
The proposition follows from the fact that the generic polar varieties of
non-singular complex hypersurfaces are non-singular complete intersections
\cite[Proposition 3]{Bank97}, and since $\Pos_{\R,d,k}$
has non-empty interior, we can choose a generic polynomial in
$\Pos_{\R,d,k}$ having this property.
\end{proof}

\begin{remark}

The fact that generic
polar varieties of a non-singular complex variety
are non-singular complete intersections
is not true in general for higher codimension varieties,
see \cite{Bank97,Bank10}, in particular \cite[Section 3]{Bank10}.
\end{remark}

\subsection{Infinitesimals and Puiseux series}{\label{sec:puiseux}}
\label{subsec:puiseux}
In our arguments we are going to use infinitesimals and
non-archimedean extensions of a given real closed field $\R$.  A
typical non-archimedean extension of $\R$ is the field $\R\la\eps\ra$
of algebraic Puiseux series with coefficients in $\R$ , which coincide
with the
germs of semi-algebraic continuous functions (see \cite{BPRbook2},
Chapter 2, Section 6 and Chapter 3, Section 3).  An element $x\in \R\la
\eps\ra$ is bounded over $\R$ if $\vert x \vert \le r$ for some $0\le
r \in \R$.  The subring $\R\la\eps\ra_b$ of elements of $\R\la\eps\ra$
bounded over $\R$ consists of the Puiseux series with non-negative
exponents.  We denote by $\lim_{\varepsilon}$ the ring homomorphism
from~$\R \langle \varepsilon \rangle_b$ to $\R$ which maps $\sum_{i
  \in \mathbb{N}} a_i \varepsilon^{i / q}$ to $a_0$. So, the mapping
$\lim_{\varepsilon}$ simply replaces $\varepsilon$ by $0$ in a bounded
Puiseux series.  Given $S\subset \R\la \eps \ra^k$, we denote by
$\lim_\eps(S)\subset \R^k$ the image by $\lim_\eps$ of the elements of
$S$ whose coordinates are bounded over $\R$.
We denote by $\R\langle \ep_1,\ep_2,\ldots,\ep_\ell\rangle$ the real closed field $\R\langle \ep_1 \rangle \langle \ep_2 \rangle \cdots \langle \ep_\ell \rangle$, and we let $\lim_{\ep_i}$ denote the ring homomorphism $\lim_{\ep_i}\lim_{\ep_{i+1}}\cdots \lim_{\ep_{\ell}}$.

More generally, let $\R'$ be a real closed field extension of $\R$.
If $S\subset \R^ k$ is a semi-algebraic set, defined by a boolean
formula $\Phi$ with coefficients in $\R$, we denote by $\Ext(S,\R')$
the extension of $S$ to $\R'$, i.e.\ the semi-algebraic subset of
$\R'^k$ defined by $\Phi$.  The first property of $\Ext(S,\R')$ is
that it is well defined, i.e.\ independent on the formula $\Phi$
describing $S$ (\cite{BPRbook2} Proposition 2.87).  Many properties of
$S$ can be transferred to $\Ext(S,\R')$: for example $S$ is non-empty
if and only if $\Ext(S,\R')$ is non-empty, $S$ is semi-algebraically
connected if and only if $\Ext(S,\R')$ is semi-algebraically connected
(\cite{BPRbook2} Proposition 5.24).

\section{Proof of the Main Theorem}
\label{sec:main}

\begin{remark}
Most of the techniques employed in the proof of the main theorem are similar to those found in \cite{BPRbook2}, see \cite[Section 13.1 and Section 13.3]{BPRbook2}.
\end{remark}

Throughout this section, $\R$ is a real closed field,
$\mathcal{Q},\mathcal{P}$ are finite subsets
of $\R[X_1,\ldots,X_k]$, with
$\deg P= d_P$
for all $P \in \mathcal{P}$, and
$\deg(Q) \leq d_0$ for all $Q \in \mathcal{Q}$.
We
denote by
$k'$ the real dimension of $\ZZ(\mathcal{Q},\R^k)$.
Let $Q = \sum_{F \in \mathcal{Q}} F^2$.

For $x \in \R^k$ and $r > 0$, we will denote by $B_k(0,r)$ the open ball
centred at $x$ of radius $r$. For any semi-algebraic subset $X \subset \R^k$,
we denote by $\overline{X}$ the closure of $X$ in $\R^k$. It follows from
the Tarski-Seidenberg transfer principle (see for example
\cite[Ch 2, Section 5]{BPRbook2}) that the closure
of a semi-algebraic set is again semi-algebraic.

We suppose
using Proposition \ref{prop:generic} that
after making a
linear change in co-ordinates if necessary
the given system of co-ordinates
is good
with respect to $\ZZ(Q,\R^k)$.

Using Proposition \ref{prop:generic_polar},
suppose that $H \in \Pos_{\R,2d_0,k}$ satisfies
\begin{property}
\label{prop:H}
for any $p, 0 \leq p \leq k$, $\Cr_p^h(H)$ defines a non-singular complete
intersection in $\PP^k_\C$.
\end{property}

Let $\Def(Q,H,\zeta)$ be defined by
\[
\Def(Q,H,\zeta) = (1 -\zeta)Q - \zeta H.
\]

We first prove several properties of the polynomial $\Def(Q,H,\zeta)$.

\begin{proposition}
\label{prop:limitofdef}
Let
$\tilde{\R}$ be any real closed field containing $\R\langle 1/\Omega\rangle$, and let
$C$ be a semi-algebraically connected component of
$$
\ZZ(Q,\tilde{\R}^k)  \cap B_k(0,\Omega).
$$
Then, there exists a semi-algebraically connected component,
$D \subset \tilde{\R}\langle \zeta\rangle^k$  of the semi-algebraic set
$$W = \{ x \in B_k(0,\Omega) \;\mid\; \Def(Q,H,\zeta)(x) \leq 0 \}$$
such that $\overline{C} = \lim_\zeta D$.
\end{proposition}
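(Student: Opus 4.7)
The plan is to choose $D$ as the semi-algebraically connected component of $W$ containing the extension $\Ext(C,\tilde{\R}\langle\zeta\rangle)$, and then verify the set equality $\overline{C} = \lim_\zeta D$ by proving each inclusion separately. To see that this $D$ exists and is unique, note that $Q = \sum_{F \in \mathcal{Q}} F^2$ is everywhere non-negative and vanishes on $C$, while $H \in \Pos_{\R,2d_0,k}$ is non-negative on $\tilde{\R}^k$, so for every $x \in C$ one has $\Def(Q,H,\zeta)(x) = -\zeta H(x) \leq 0$; together with $C \subseteq B_k(0,\Omega)$ this yields $\Ext(C,\tilde{\R}\langle\zeta\rangle) \subseteq W$. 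Semi-algebraic connectivity is preserved under extension of scalars, so $\Ext(C,\tilde{\R}\langle\zeta\rangle)$ sits inside a unique semi-algebraically connected component $D$ of $W$.

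The inclusion $\overline{C} \subseteq \lim_\zeta D$ then follows from $C \subseteq \lim_\zeta D$ (since $\lim_\zeta$ is the identity on $\tilde{\R}^k$) together with the standard fact that $\lim_\zeta$ of a bounded semi-algebraic set is closed. For the reverse inclusion, take any $y = \lim_\zeta \tilde{y}$ with $\tilde{y} \in D$ bounded over $\tilde{\R}$. From the inequality $(1-\zeta)Q(\tilde{y}) \leq \zeta H(\tilde{y})$ and boundedness of $H(\tilde{y})$, applying $\lim_\zeta$ gives $Q(y) \leq 0$; combined with $Q \geq 0$ this places $y \in \ZZ(Q,\tilde{\R}^k) \cap \overline{B_k(0,\Omega)}$. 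It remains to ensure that $y$ lies in the specific closure $\overline{C}$, rather than in the closure of some other semi-algebraically connected component of $\ZZ(Q,\tilde{\R}^k) \cap B_k(0,\Omega)$.

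This last step is the main obstacle: ruling out the possibility that the infinitesimal tube $W$ merges distinct components of the variety. The plan is to invoke Hardt semi-algebraic triviality for the one-parameter family $\mathcal{W}_t = \{x \in \overline{B_k(0,\Omega)} : (1-t)Q(x) - tH(x) \leq 0\}$ with $t \in (0,1)$: over some initial interval $(0,t_0)$ the projection $\mathcal{W} \to (0,t_0)$ is semi-algebraically trivial, and each fiber deformation retracts onto $\ZZ(Q,\tilde{\R}^k) \cap \overline{B_k(0,\Omega)}$, yielding a bijection between semi-algebraically connected components. Base-changing to $\tilde{\R}\langle\zeta\rangle$ and specializing at $t = \zeta$, this bijection forces any component $D$ of $W$ meeting $\Ext(C,\tilde{\R}\langle\zeta\rangle)$ to satisfy $\lim_\zeta D \subseteq \overline{C}$, completing the proof.
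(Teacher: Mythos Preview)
Your setup matches the paper's exactly: take $D$ to be the component of $W$ containing $\Ext(C,\tilde{\R}\langle\zeta\rangle)$, and check both inclusions. Where you diverge is in the final step. You treat ``ruling out that the tube merges components'' as the main obstacle and reach for Hardt triviality, but the paper dispatches this in one line via a fact you overlooked: since $D$ is bounded over $\tilde{\R}$ and semi-algebraically connected, $\lim_\zeta D$ is itself semi-algebraically connected (Proposition~12.43 in \cite{BPRbook2}). Once $\lim_\zeta D$ is known to be connected, contained in $\ZZ(Q,\tilde{\R}^k)\cap\overline{B_k(0,\Omega)}$, and to contain $\overline{C}$, the equality is immediate, because $\overline{C}$ is a full semi-algebraically connected component of $\ZZ(Q,\tilde{\R}^k)\cap\overline{B_k(0,\Omega)}$ (by the conical structure at infinity): a connected set containing one component of a space cannot meet any other component.

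Your Hardt-triviality route is not wrong in spirit, but as written it has a gap. Hardt triviality over $(0,t_0)$ only tells you the fibers $\mathcal{W}_t$ for $t\in(0,t_0)$ are mutually semi-algebraically homeomorphic; it says nothing about the degenerate fiber at $t=0$, and in particular does not furnish the deformation retract onto $\ZZ(Q,\tilde{\R}^k)\cap\overline{B_k(0,\Omega)}$ that you assert. Producing that retraction, or the claimed bijection of components, requires a separate argument---essentially the connectedness-of-limit fact above, or a local conical structure argument---so you would end up invoking the paper's key ingredient anyway.
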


\begin{proof}
It is clear that $\ZZ(Q,\tilde{\R}\langle \zeta\rangle^k) \cap B_k(0,\Omega)
\subset W$,
since
$H(x) \geq 0$ for all $x\in \tilde{\R}\langle \zeta\rangle^k$. Now let $C$ be
a semi-algebraically connected component of
$\ZZ(Q,\tilde{\R}^k) \cap B_k(0,\Omega)$
and let $D$ be the
semi-algebraically connected component of $W$ containing
$\Ext(C,\tilde{\R}\langle\zeta\rangle)$.
Since $D$ is bounded over $\tilde{\R}$
and semi-algebraically connected
we have $\lim_\zeta D$ is semi-algebraically
connected (using for example Proposition 12.43 in \cite{BPRbook2}),
and contained in $\ZZ(Q,\tilde{\R}^k)
\cap \overline{B_k(0,\Omega)}$.
Moreover, $\lim_\zeta D$ contains $\overline{C}$.
But $\overline{C}$ is a semi-algebraically
connected component of $\ZZ(Q,\tilde{\R}^k) \cap
\overline{B_k(0,\Omega)}$
(using the
conical structure at infinity of $\ZZ(Q,\tilde{\R}^k)$),
and hence $\lim_\zeta D = \overline{C}$.
\end{proof}

\begin{proposition}
\label{prop:criticallocusontube}
Let
$\tilde{\R}$ be any real closed field containing $\R\langle 1/\Omega\rangle$, and
$$W = \ZZ(\Cr_{k-k'-1}(\Def(Q,H,\zeta)), \tilde{\R}\langle \zeta\rangle^k)
\cap B_k(0,\Omega).
$$
Then, $\lim_\zeta W = \ZZ(Q,\tilde{\R}^k) \cap \overline{B_k(0,\Omega)}$.
\end{proposition}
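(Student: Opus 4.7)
The plan is to prove the two set inclusions separately. The forward inclusion $\lim_\zeta W\subseteq\ZZ(Q,\tilde{\R}^k)\cap\overline{B_k(0,\Omega)}$ is immediate: any bounded $x\in W$ satisfies $\Def(Q,H,\zeta)(x)=(1-\zeta)Q(x)-\zeta H(x)=0$, and applying the ring homomorphism $\lim_\zeta$ yields $Q(\lim_\zeta x)=0$, while $x\in B_k(0,\Omega)$ forces $\lim_\zeta x\in\overline{B_k(0,\Omega)}$.

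For the substantive reverse inclusion, fix $y\in\ZZ(Q,\tilde{\R}^k)\cap\overline{B_k(0,\Omega)}$. By the conical structure of semi-algebraic sets at infinity, $y$ lies in $\overline{C}$ for some semi-algebraically connected component $C$ of $\ZZ(Q,\tilde{\R}^k)\cap B_k(0,\Omega)$; Proposition \ref{prop:limitofdef} then produces a semi-algebraically connected component $D$ of $\{\Def(Q,H,\zeta)\le 0\}\cap B_k(0,\Omega)$ with $\lim_\zeta D=\overline{C}$. The strategy is to first establish $v\in\lim_\zeta W$ for every $v$ in the dense open subset $A_\ell\cap C$ of $\textnormal{reg} V\cap C$ (where $A_\ell$ is the transversality locus afforded by the good-coordinates hypothesis with $\ell=\textnormal{span}(e_1,\ldots,e_{k-k'})$) by an explicit local construction, and then extend to all of $\overline{C}$ using density of $A_\ell\cap C$ in $\overline{C}$ together with closedness of $\lim_\zeta$ applied to the closure of $W$.

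The local construction at such a $v$ proceeds as follows. By transversality, the implicit function theorem writes $V$ locally as the graph $(X_1,\ldots,X_{k-k'})=\phi(X_{k-k'+1},\ldots,X_k)$; setting $s=(X_1,\ldots,X_{k-k'})-\phi(X_{k-k'+1},\ldots,X_k)$ and $t=(X_{k-k'+1},\ldots,X_k)$, the identity $\textnormal{Hess}(Q)|_v=2\sum_{F\in\mathcal Q}(\nabla F)(\nabla F)^T$ shows that $\textnormal{Hess}(Q)$ is positive definite on the normal space to $V$ at a regular point (the gradients $\nabla F$ span the conormal there), hence $Q(x)=\langle A(t)s,s\rangle+O(|s|^3)$ with $A(t)$ a positive definite form near $v$. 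The tube boundary $\{\Def(Q,H,\zeta)=0\}$ is thus locally the infinitesimal ellipsoid $\langle A(t)s,s\rangle=\zeta H(v)/(1-\zeta)+O(\zeta^2)$, and the critical points of the projection $\pi=(X_{k-k'},\ldots,X_k)$ restricted to this hypersurface are exactly the two points at which the sub-ellipsoid obtained by additionally fixing $s_{k-k'}$ collapses to a single point: $s_{k-k'}=\pm\sqrt{\zeta H(v)/(\alpha(t)(1-\zeta))}$ (with $\alpha(t)$ a Schur complement of $A(t)$) and $(s_1,\ldots,s_{k-k'-1})$ determined by the induced minimization. Both of these points satisfy $\Def=0$ together with $\partial\Def/\partial X_i=0$ for $i=1,\ldots,k-k'-1$, so they lie in $W$, and both converge to $v$ as $\zeta\to 0$.

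The main obstacle will be making this local description rigorous, particularly justifying that $\textnormal{Hess}(Q)$ is positive definite on the normal to $V$ at regular points (which relies on $\mathcal Q$ generating the conormal space there, a reducedness-type condition on $V$) and identifying the critical locus of $\pi|_{\{\Def=0\}}$ with the asserted ellipsoid-degeneracy condition. An alternative that sidesteps the Hessian analysis, useful when the regularity hypotheses on $\mathcal Q$ fail, is to invoke Hardt's semi-algebraic triviality of the family $\{\Def(Q,H,\zeta)\le 0\}$ parametrized by $\zeta$ over a small neighborhood of $v$ in $V$, producing the required critical points of $\pi$ by a purely topological argument.
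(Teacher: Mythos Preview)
Your forward inclusion is fine and matches the paper. The reverse inclusion, however, has two genuine gaps.

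\textbf{(1) The Hessian step assumes what is not given.} The identity $\textnormal{Hess}(Q)|_v=2\sum_{F\in\mathcal Q}(\nabla F)(\nabla F)^T$ is correct at a zero $v$ of $Q$, but positive definiteness on the normal space of $V$ requires the $\nabla F(v)$ to span that normal space, i.e.\ that $\mathcal Q$ cuts out $V$ with full rank Jacobian at $v$. No such hypothesis is available: $\mathcal Q$ is an arbitrary finite family, possibly with redundancies or singular presentation of $V$. You flag this yourself as an ``obstacle,'' but neither the implicit-function expansion nor the Hardt-triviality alternative you sketch actually produces a point satisfying the specific first-order conditions $\partial\,\Def/\partial X_i=0$ for $i\le k-k'-1$ without this rank assumption.

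\textbf{(2) Working only on the top stratum misses components.} You prove the claim on $A_\ell\subset\textnormal{reg}\,V$ (the regular locus in dimension $k'$) and then close up. But $\textnormal{reg}\,V$ need not be Euclidean-dense in $V$: a component $C$ of $\ZZ(Q,\tilde\R^k)\cap B_k(0,\Omega)$ may lie entirely in a lower-dimensional stratum $V^{(i)}$ (e.g.\ $V$ a surface disjoint union an isolated point), in which case $A_\ell\cap C=\emptyset$ and your density argument says nothing. The good-coordinates hypothesis was set up precisely to give transversality at \emph{every} dimension $d(i)$, not only at $k'$; you are using only the $k'$-goodness.

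The paper's argument avoids both issues and is shorter. Given $x\in V\cap B_k(0,\Omega)$, every neighbourhood of $x$ contains some $y\in\textnormal{reg}\,V^{(i')}$ at which the local dimension of $V$ equals $p=d(i')\le k'$ and (by $p$-goodness) $T_y(\textnormal{reg}\,V^{(i')})$ is transverse to $\textnormal{span}(e_1,\ldots,e_{k-p})$. Transversality makes $y$ isolated in $V\cap\pi_{[k-p+1,k]}^{-1}(y)$, so the connected component $D_y$ of $\{\Def(Q,H,\zeta)\le 0\}\cap B_k(0,\Omega)\cap\pi_{[k-p+1,k]}^{-1}(y)$ containing $y$ is closed, bounded, and has $\lim_\zeta D_y=\{y\}$. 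Take $z\in D_y$ maximizing the coordinate $X_{k-p}$: then $z$ lies on the tube boundary and is a critical point of the projection to $(X_{k-p},\ldots,X_k)$, i.e.\ $z\in\ZZ(\Cr_{k-p-1}(\Def(Q,H,\zeta)))$. Since $p\le k'$ this set is contained in $W$, and $\lim_\zeta z=y$. No Hessian analysis, no rank assumption on $\mathcal Q$, and the argument runs uniformly over all strata.
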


We will use the following notation.
\begin{notation}
For $ 1\leq p \leq q < k$,
we denote by  $\pi_{[p,q]}: \R^k=\R^{[1,k]}\rightarrow \R^{[p,q]}$ the projection
$$(x_1, \ldots, x_k)\mapsto (x_p, \ldots, x_q).$$
\end{notation}

\begin{proof}[Proof of Proposition \ref{prop:criticallocusontube}]
By Proposition \ref{prop:limitofdef} it is clear that
$\lim_\zeta W \subset \ZZ(Q,\tilde{\R}^k) \cap \overline{B_k(0,\Omega)}$.
We prove the other inclusion.

Let $V = \ZZ(Q,\tilde{\R}^k)$,
and suppose that $x \in \textnormal{reg } V^{(i)} \cap B_k(0,\Omega)$ for
some $i, k-k' \leq i \leq k$.
Every open
semi-algebraic
neighbourhood $U$ of $x$ in
$V \cap B_k(0,\Omega)$
contains a point $y \in \textnormal{reg } V^{(i')} \cap B_k(0,\Omega)$
for some $i' \geq i$, such that
that the local dimension of $V$ at
$y$ is equal to $d(i')$.
Moreover, since the given system of co-ordinates is assumed to be good for
$V$,
we can also assume that
the tangent space
$T_{y}(\textnormal{reg }\; V^{(i')})$
is transverse to the span of the first $k-
d(i')$ co-ordinate
vectors.

It suffices to prove that there exists $z \in W$ such that $\lim_\zeta z = y$.
If this is  true for every neighbourhood $U$ of $x$ in $V$, this
would imply that $x \in \lim_\zeta W$.

Let $p = d(i')$.
The property that
$T_{y}(\textnormal{reg }\; V^{(i')})$
is transverse to the span of the first $k-p$ co-ordinate
vectors implies that $y$ is an isolated point of
$V \cap \pi_{[k-p+1,k]}^{-1}(y)$.
Let $T \subset \tilde{\R}\langle \zeta\rangle^k$
denote the semi-algebraic subset of $B_k(0,\Omega)$
defined by
$$
T = \{ x \in B_k(0,\Omega)\; |\; \Def(Q,H,\zeta)(x) \leq 0 \},
$$
and
$D_y$ denote the semi-algebraically connected component of
$T \cap  \pi_{[k-p+1,k]}^{-1}(y)$ containing $y$. Then,
$D_y$ is a closed and bounded semi-algebraic set, with
$\lim_\zeta D_y = y$.
The boundary of $D_y$ is contained in
\[\ZZ(\Def(Q,H,\zeta),\tilde{\R}\langle \zeta\rangle^k) \cap \pi_{[k-p+1,k]}^{-1}(y).
\]
Let $z \in D_y$ be a point in $D_y$ for which the $(k-p)$-th
co-ordinate achieves
its maximum. Then,
$z \in \ZZ(\Cr_{k-p-1}(\Def(Q,H,\zeta)),\tilde{\R}\langle \zeta\rangle^k)$,
and since $p \leq k'$,
$$\ZZ(\Cr_{k-p-1}(\Def(Q,H,\zeta)),\tilde{\R}\langle \zeta\rangle^k)
\subset
\ZZ(\Cr_{k-k'-1}(\Def(Q,H,\zeta)),\tilde{\R}\langle \zeta\rangle^k)$$
and
hence, $z \in W$. Moreover,
$\lim_\zeta z = y$.
\end{proof}

\begin{proposition}
\label{prop:general_pos_for_cr}
Let $\tilde{\R}$ be any real closed field containing $\R$ and $\tilde{\C}$ the algebraic closure of $\tilde{\R}$.
For for every $p, 0 \leq p \leq k$, $\Cr_p^h(\Def(Q,H,\zeta))$
defines a non-singular complete intersection in
$\PP^k_{\tilde{\C}\langle \zeta\rangle}$.
\end{proposition}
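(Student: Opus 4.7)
The plan is to view $\Def(Q,H,\zeta)=(1-\zeta)Q-\zeta H$ as the specialization, at a transcendental parameter, of a one-parameter family, and to exploit the fact that the non-singular complete intersection property is Zariski open in such a family, together with the fact that this property already holds at the specialization $T=1$ by our choice of $H$ (Property \ref{prop:H}).

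First, I would introduce a formal indeterminate $T$ and set $P_T := (1-T)Q - TH \in \R[T][X_1,\ldots,X_k]$. Since $H$ has degree exactly $2d_0$ (and we may assume the same for $Q$), $P_T$ has degree $2d_0$ in the variables $X_i$, and
\[
P_T^h \;=\; (1-T) Q^h - T H^h, \qquad \frac{\partial P_T^h}{\partial X_i} \;=\; (1-T)\,\frac{\partial Q^h}{\partial X_i} - T\,\frac{\partial H^h}{\partial X_i}.
\]
Thus $\Cr_p^h(P_T)$ is an $\R[T]$-linear family of tuples of homogeneous polynomials in $X_0,\ldots,X_k$.

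Next, I would observe that for each fixed $p$ with $0 \leq p \leq k$, the locus in the space of coefficients of a $(p+1)$-tuple of homogeneous polynomials in $X_0,\ldots,X_k$ of the prescribed degrees where the tuple fails to cut out a non-singular complete intersection in $\PP^k$ over an algebraically closed field is Zariski closed: it is obtained by elimination from the incidence variety defined by common vanishing of the tuple together with the drop-of-rank condition on the Jacobian. Pulling this closed condition back along the (linear in $T$) morphism $T \mapsto \Cr_p^h(P_T)$ yields a polynomial $R_p(T) \in \R[T]$ such that $R_p(t) \neq 0$ guarantees that $\Cr_p^h(P_t)$ defines a non-singular complete intersection in $\PP^k$ over any algebraically closed extension of $\R$. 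At $t = 1$, $P_1 = -H$, so $\Cr_p^h(P_1)$ cuts out the same projective variety as $\Cr_p^h(H)$, which by Property \ref{prop:H} is a non-singular complete intersection in $\PP^k_\C$; hence $R_p(1) \neq 0$, so $R_p \not\equiv 0$.

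Finally, $\zeta$ is by construction transcendental over $\R$, hence over $\tilde{\R}$, and hence over $\tilde{\C}$ (which is algebraic over $\tilde{\R}$); therefore $R_p(\zeta) \neq 0$ in $\tilde{\C}\langle\zeta\rangle$ for every $p$, $0 \leq p \leq k$. Consequently, $\Cr_p^h(\Def(Q,H,\zeta)) = \Cr_p^h(P_\zeta)$ defines a non-singular complete intersection in $\PP^k_{\tilde{\C}\langle\zeta\rangle}$ for every such $p$. The main obstacle in making this plan fully rigorous is the elimination-theoretic construction of $R_p(T)$ and the bookkeeping to handle all values of $p$ uniformly (including the degenerate case $p=k$, where the complete intersection is empty); conceptually, however, all that is happening is that the Zariski-open locus on the $T$-line where $\Cr_p^h(P_T)$ is a non-singular complete intersection is non-empty (witnessed by $T=1$), so it contains the generic point, which is exactly where $\zeta$ sits.
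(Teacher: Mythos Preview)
Your proposal is correct and follows essentially the same approach as the paper: both arguments view $\Def(Q,H,\zeta)$ as the member at $T=\zeta$ of the one-parameter family $P_T=(1-T)Q-TH$, observe that the non-singular complete intersection condition on $\Cr_p^h(P_T)$ holds at $T=1$ by Property~\ref{prop:H}, and use that the locus of good $T$ is Zariski open (equivalently, cofinite) in the affine line. The only cosmetic difference is in the final step: the paper phrases the conclusion order-theoretically (the cofinite good set contains an open interval to the right of $0$ in $\tilde{\R}$, hence contains the infinitesimal $\zeta$), whereas you phrase it algebraically (the bad set is the zero locus of a nonzero $R_p\in\R[T]$, and $\zeta$ is transcendental over $\R$, so $R_p(\zeta)\neq 0$); these are equivalent. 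One small wording fix: your implication ``transcendental over $\R$, hence over $\tilde{\R}$'' goes the wrong way---what you actually use (and what is true) is that $\zeta$ is transcendental over $\tilde{\R}$ by construction of $\tilde{\R}\langle\zeta\rangle$, hence a fortiori over $\R$, which suffices since $R_p\in\R[T]$.
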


\begin{proof}
By Property \ref{prop:H} of $H$,
we have that
for each $p,\; 0 \leq p \leq k$, $\Cr_p^h(H)$
defines a non-singular complete intersection in
$\PP^k_{\tilde{\C}}$.
Thus, for each $p,\; 0 \leq p \leq k$, $\Cr_p^h(\Def(Q,H,1))$
defines a non-singular complete intersection in
$\PP^k_{\tilde{\C}}$.
Since the property of being non-singular complete intersection
is first order expressible, the set of
$t \in \tilde{\C}$
for which this holds
is constructible, and since the property is also stable there is an open
subset containing $1$ for which it holds. But since a constructible subset of
$\tilde{\C}$
is either finite or co-finite, there exists
an open interval to the right of $0$ in
$\tilde{\R}$ for which the property holds,
and in particular it holds for infinitesimal $\zeta$.
\end{proof}

\begin{proposition}
\label{prop:nonstrict-to-strict}
Let $\sigma \in \{0,+1,-1\}^{\mathcal{P}}$, and let
let $C$ be a semi-algebraically connected component of
$\RR(\sigma,\ZZ(Q,\R\langle 1/\Omega \rangle^k) \cap B_k(0,\Omega))$.
Then, there exists a unique semi-algebraically connected component,
$D \subset \R\langle 1/\Omega,\eps,\delta\rangle^k$,
of the semi-algebraic set defined by
$$
\displaylines{
(Q = 0)
\wedge
\bigwedge_{P \in \mathcal{P}\atop \sigma(P)=0}
(-\delta  < P < \delta)
\wedge
\bigwedge_{P \in \mathcal{P}\atop \sigma(P)=1} (P  > \eps) \wedge
\bigwedge_{P \in \mathcal{P}\atop \sigma(P)=-1} (P <  -\eps)
\wedge
(|X|^2 < \Omega^2)
}
$$
such that
$C = D \cap \R
\langle 1/\Omega \rangle
^k$.
Moreover, if $C$ is a semi-algebraically connected component of
$\RR(\sigma,\ZZ(Q,\R\langle 1/\Omega \rangle^k)\cap B_k(0,\Omega))$,
$C'$ is a semi-algebraically connected component of
$\RR(\sigma',\ZZ(Q,\R\langle 1/\Omega \rangle^k) \cap B_k(0,\Omega))$, and
$D,D'$ are the unique semi-algebraically connected components as above satisfying
$C=D\cap \R\langle 1/\Omega \rangle^k,\; C'=D'\cap \R\langle 1/\Omega \rangle^k$,
then we have $\overline{D}\cap \overline{D'}=\emptyset$ if $C\neq C'$.

\end{proposition}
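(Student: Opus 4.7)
The plan is to set $S_\sigma$ equal to the semi-algebraic subset of $\R\langle 1/\Omega,\eps,\delta\rangle^k$ defined by the displayed formula and let $T_\sigma := \RR(\sigma, \ZZ(Q,\R\langle 1/\Omega\rangle^k) \cap B_k(0,\Omega))$. I will establish three things: (a) $C$ is contained in a unique semi-algebraically connected component $D$ of $S_\sigma$; (b) $D \cap \R\langle 1/\Omega\rangle^k = C$; (c) distinct such components have disjoint closures.

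For (a), first observe that $S_\sigma \cap \R\langle 1/\Omega\rangle^k = T_\sigma$: for any $x \in \R\langle 1/\Omega\rangle^k$, the infinitesimality of $\eps,\delta$ over $\R\langle 1/\Omega\rangle$ makes the strict inequalities $P(x) > \eps$, $P(x) < -\eps$ and $-\delta < P(x) < \delta$ equivalent to $P(x) > 0$, $P(x) < 0$ and $P(x) = 0$ respectively. Hence $C \subset T_\sigma \subset S_\sigma$. To show that $C$ lies in a single component, take any two points $x_1,x_2 \in C$ and join them by a semi-algebraic path $\gamma \subset C$ over $\R\langle 1/\Omega\rangle$. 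Since $\gamma$ is closed and bounded, each $|P|$ with $\sigma(P) \neq 0$ attains on $\gamma$ a positive minimum $m \in \R\langle 1/\Omega\rangle$, and $m > \eps$ by infinitesimality. By transfer, $\Ext(\gamma, \R\langle 1/\Omega,\eps,\delta\rangle)$ lies inside $S_\sigma$, and it is semi-algebraically connected by the extension principle (Proposition~5.24 of \cite{BPRbook2}). Thus $x_1,x_2$ share a common component $D$ of $S_\sigma$, and this $D$ is unique since any other component of $S_\sigma$ containing $C$ would share a point with $D$ and therefore coincide with it.

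For (b), I apply the specialization $\lim_\delta$ to $D$. By Proposition~12.43 of \cite{BPRbook2}, $\lim_\delta D$ is semi-algebraically connected in $\R\langle 1/\Omega,\eps\rangle^k$, and the defining inequalities of $S_\sigma$ specialize as $-\delta < P < \delta \mapsto P = 0$, $P > \eps \mapsto P \geq \eps > 0$, $P < -\eps \mapsto P \leq -\eps < 0$, yielding the strict $\sigma$-sign condition. Thus $\lim_\delta D \subset \Ext(T_\sigma, \R\langle 1/\Omega,\eps\rangle)$. By the extension principle, this set decomposes as the disjoint union $\bigsqcup_j \Ext(C_j,\cdot)$ over the connected components $C_j$ of $T_\sigma$. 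Since $\lim_\delta$ is the identity on $\R\langle 1/\Omega\rangle^k$ and $C \subset D$, we have $C \subset \lim_\delta D$, so connectedness forces $\lim_\delta D \subset \Ext(C,\cdot)$. Intersecting with $\R\langle 1/\Omega\rangle^k$ gives $D \cap \R\langle 1/\Omega\rangle^k \subset C$, and combined with (a) yields equality.

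Finally, for (c): if $\sigma \neq \sigma'$, pick $P$ with $\sigma(P) \neq \sigma'(P)$; the corresponding pair of weak inequalities valid on $\overline{D}$ and $\overline{D'}$ is incompatible thanks to the ordering $0 < \delta < \eps$ (for instance $P \geq \eps$ versus $|P| \leq \delta$ contradicts $\delta < \eps$, and $P \geq \eps$ versus $P \leq -\eps$ contradicts $\eps > 0$). In the delicate case $\sigma = \sigma'$, $C \neq C'$, suppose $y \in \overline{D} \cap \overline{D'}$. Both $\overline{D}$ and $\overline{D'}$ are bounded and semi-algebraically connected, as closures of bounded connected semi-algebraic sets, so by Proposition~12.43 of \cite{BPRbook2} the sets $\lim_\delta \overline{D}$ and $\lim_\delta \overline{D'}$ are semi-algebraically connected. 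The same inequality analysis as in (b) places each of them inside $\Ext(T_\sigma,\R\langle 1/\Omega,\eps\rangle)$, and since they respectively contain $C$ and $C'$ (as $\lim_\delta$ fixes $\R\langle 1/\Omega\rangle^k$), the argument of (b) forces $\lim_\delta \overline{D} \subset \Ext(C,\cdot)$ and $\lim_\delta \overline{D'} \subset \Ext(C',\cdot)$. These two components of $\Ext(T_\sigma,\cdot)$ are disjoint, contradicting $\lim_\delta y \in \lim_\delta \overline{D} \cap \lim_\delta \overline{D'}$. The main technical point is precisely to verify that the inequality analysis goes through on the full closure $\overline{D}$, and that $\lim_\delta$ interacts with closures as indicated, for which the separation provided by the gap between $\eps$ and $\delta$ is essential.
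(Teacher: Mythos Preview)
Your argument is correct and follows the same route as the paper's proof: the paper declares the first part ``clear'' and, for the second, takes $x \in \overline{D} \cap \overline{D'}$ and observes that $\lim_\delta x$ lands in $\Ext(C,\R\langle 1/\Omega,\eps\rangle) \cap \Ext(C',\R\langle 1/\Omega,\eps\rangle) = \emptyset$. You have simply supplied the details the paper omits---in particular the verification (via connectedness of $\lim_\delta \overline{D}$ and the gap $\delta < \eps$) that $\lim_\delta \overline{D}$ really lies inside $\Ext(C,\R\langle 1/\Omega,\eps\rangle)$---so the two arguments coincide.
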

\begin{proof}
The first part is clear.  To prove the second part suppose, seeking a contradiction, that $x\in \overline{D}\cap \overline{D'}$.  Notice that $\lim_{\delta} x \in \Ext(C,\R\langle 1/\Omega ,\ep \rangle)\cap \Ext(C',\R\langle 1/\Omega,\ep)$, but $\Ext(C,\R\langle 1/\Omega,\ep \rangle)\cap \Ext(C',\R\langle 1/\Omega,\ep\rangle)=\emptyset$ since $C\cap C'=\emptyset$.
\end{proof}

We denote by $\R'$ the real closed field
$\R\langle 1/\Omega,\eps,\delta\rangle$
and by $\C'$ the algebraic closure of $\R'$.

We also denote
\begin{equation}
\label{eqn:G}
G = \sum_{i=1}^{k} X_i^2 - \Omega^2.
\end{equation}

Let
$\mathcal{P}' \subset \R'[X_1,\ldots,X_k]$
be defined by
$$
\displaylines{
\mathcal{P}' =
\bigcup_{P \in \mathcal{P}} \{P\pm \eps, P\pm \delta\} \cup \{G\}.
}
$$

By Proposition \ref{prop:nonstrict-to-strict} we will henceforth restrict
attention to strict sign conditions on the family $\mathcal{P}'$.

Let
$\mathcal{H} =
(H_F \in \Pos_{\R',\deg(F),k}
)_{F \in \mathcal{P}'} $
be a family of polynomials with generic coefficients.
More precisely, this means that that $\mathcal{H}$ is chosen so that it
avoids a certain Zariski
closed subset of the product
$\displaystyle{
\times_{F\in \mathcal{P}'} \Pos_{\R'\langle\zeta\rangle,\deg(F),k}
}
$
of codimension at least one, defined by the condition that
$$
\Cr_{k-k'-1}^h(\Def(Q,H,\zeta)) \cup
\bigcup_{F \in \mathcal{P}''}\{H_F^h\}
$$
is not a non-singular, complete intersection in
$\PP_{\C'\langle\zeta\rangle}^k$ for some
${\mathcal P}'' \subset \mathcal{P}'$.

\begin{proposition}
\label{prop:general_pos}
For each $j, \; 0 \leq  j \leq k'$,
and subset ${\mathcal P}'' \subset \mathcal{P}'$ with
$\card \;\mathcal{P}'' = j$,
and $\tau \in \{-1,+1\}^{\mathcal{P}''}$
the set of homogeneous polynomials
$$
\Cr_{k-k'-1}^h(\Def(Q,H,\zeta)) \cup
\bigcup_{F \in \mathcal{P}''}\{(1 -\eps') F^h  - \tau(F)\; \eps'\; H_F^h\}
$$
defines a non-singular, complete intersection in
$\PP_{\C'\langle\zeta,\eps'\rangle}^k$.
\end{proposition}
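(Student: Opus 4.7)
The plan is to mirror the deformation argument from the proof of Proposition~\ref{prop:general_pos_for_cr}: verify the non-singular complete intersection property at the concrete value $\eps' = 1$ of the deformation parameter, then propagate it to infinitesimal $\eps'$ by combining first-order transfer with the fact that a constructible subset of an affine line over an algebraically closed field is either finite or cofinite.

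For the base case, specializing $\eps' = 1$ turns each form $(1-\eps')F^h - \tau(F)\eps' H_F^h$ into $-\tau(F)H_F^h$, which defines the same hypersurface in $\PP^k_{\C'\langle\zeta\rangle}$ as $H_F^h$. Hence at $\eps' = 1$ the family in question defines the same subvariety as
\[
\Cr_{k-k'-1}^h(\Def(Q,H,\zeta)) \cup \bigcup_{F \in \mathcal{P}''}\{H_F^h\}.
\]
The genericity hypothesis on $\mathcal{H}$ stated immediately before the proposition was arranged exactly so that this union is a non-singular complete intersection in $\PP^k_{\C'\langle\zeta\rangle}$ for every subset $\mathcal{P}'' \subseteq \mathcal{P}'$ and every sign pattern $\tau$; the base case thus holds uniformly in $\mathcal{P}''$ and $\tau$.

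Now, for fixed $\mathcal{P}''$ and $\tau$, let $T$ be the set of $t \in \C'\langle\zeta\rangle$ for which $\Cr_{k-k'-1}^h(\Def(Q,H,\zeta)) \cup \bigcup_{F\in \mathcal{P}''}\{(1-t)F^h - \tau(F)\, t\, H_F^h\}$ is a non-singular complete intersection in $\PP^k_{\C'\langle\zeta\rangle}$. The property is first-order expressible (the Jacobian has the expected rank at every common projective zero), so $T$ is a constructible subset of $\C'\langle\zeta\rangle$; moreover it is stable under small perturbations, so it contains an open neighborhood of the point $1$. Since a constructible subset of the affine line over an algebraically closed field is either finite or cofinite, $T$ is cofinite, and hence $T \cap \R'\langle\zeta\rangle$ contains an open interval $(0, \eta)$ for some positive $\eta \in \R'\langle\zeta\rangle$. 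As $\eps'$ is a positive infinitesimal over $\R'\langle\zeta\rangle$, one has $0 < \eps' < \eta$ in $\R'\langle\zeta,\eps'\rangle$; Tarski--Seidenberg transfer applied to the first-order formula defining $T$ then gives that the specialized family at $\eps'$ is a non-singular complete intersection in $\PP^k_{\C'\langle\zeta,\eps'\rangle}$, as required.

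The main obstacle is essentially one of bookkeeping across the tower of extensions $\C' \subset \C'\langle\zeta\rangle \subset \C'\langle\zeta,\eps'\rangle$: one must verify that the pre-existing genericity choice of $\mathcal{H}$---which was specified precisely by requiring $\mathcal{H}$ to avoid the bad Zariski closed set indexed over \emph{all} subsets $\mathcal{P}'' \subset \mathcal{P}'$---indeed handles every $\mathcal{P}''$ and every sign pattern $\tau$ simultaneously, so that the base case at $\eps' = 1$ is available uniformly before the transfer step.
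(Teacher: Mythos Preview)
Your argument is correct and follows essentially the same route as the paper: specialize the deformation parameter to $t=1$, invoke the genericity of $\mathcal{H}$ (together with Proposition~\ref{prop:general_pos_for_cr}) to secure the base case, then use constructibility plus stability of the non-singular complete intersection condition to conclude cofiniteness and transfer to the infinitesimal $\eps'$. Your write-up is in fact slightly more explicit than the paper's at each step, but the underlying proof is the same.
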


\begin{proof}
Consider the family of polynomials,
$$
\displaylines{
\Cr_{k-k'-1}^h(\Def(Q,H,\zeta)) \cup
\bigcup_{F \in \mathcal{P}''}\{(1 - t) F^h - \tau(F)\; t\; H_F^h\}
}
$$
obtained by substituting $t$ for $\eps'$ in the given system.
Since, by Proposition \ref{prop:general_pos_for_cr}
the set $\Cr_{k-k'-1}^h(\Def(Q,H,\zeta))$ defines
a non-singular complete intersection in $\PP_{\C'\langle \zeta \rangle}$, and the $H_F$'s are
chosen generically,
the above system defines a non-singular complete intersection in
$\PP_{\C'\langle \zeta \rangle}$ when $t=1$. The set of $t \in \C'\langle \zeta \rangle$  for which the above system
defines a non-singular, complete intersection is constructible, contains $1$,
and since being a non-singular, complete intersection is a stable condition,
it is co-finite. Hence, it must contain an
open interval to the right of 0 in $\R'\langle \zeta \rangle$, and hence in particular if we substitute the infinitesimal $\eps'$ for
$t$ we obtain that the system defines a non-singular, complete intersection
in $\PP_{\C'\langle\zeta, \eps'\rangle}^k$.
\end{proof}

\begin{proposition}
\label{prop:main}
Let $\tau \in \{+1,-1\}^{\mathcal{P}'}$ with
$\tau(F) = -1$, if $F = G$
(Eqn. \ref{eqn:G}),
and
let $C$ be a semi-algebraically
connected component of
$\RR(\tau,\ZZ(Q,\R'^k))$.

Then, there exists a
a subset
$\mathcal{P}'' \subset \mathcal{P'}$ with
$\card \;\mathcal{P}''  \leq k'$,
and
a
bounded
semi-algebraically connected component $D$ of
the algebraic set
$$
\displaystyle{
\ZZ(\Cr_{k-k'-1}(\Def(Q,H,\zeta)) \cup
\bigcup_{F \in \mathcal{P}''}\{(1 -\eps') F - \tau(F)\; \eps'\; H_F\},
\R'\langle\zeta,\eps'\rangle^k)
}
$$
such that
$\lim_\zeta D \subset \overline{C}$,
and $\lim_\zeta D \cap C\neq \emptyset$.
\end{proposition}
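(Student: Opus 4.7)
The plan is to construct an infinitesimally perturbed ``tube'' $T_\tau$ around the strict sign-condition region on the deformed variety $\{\Def(Q,H,\zeta)\leq 0\}$, locate a semi-algebraically connected component $D_0$ of $T_\tau$ whose limit lies in $\overline{C}$, and then apply the critical-point method underlying Proposition~\ref{prop:criticallocusontube} to extract the required algebraic component $D$.

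Concretely, define
$$
T_\tau=\Bigl\{x\in\R'\langle\zeta,\eps'\rangle^k : \Def(Q,H,\zeta)(x)\leq 0,\ \tau(F)\bigl((1-\eps')F(x)-\tau(F)\eps'H_F(x)\bigr)\geq 0\ \forall F\in\mathcal{P}'\Bigr\}.
$$
This set is closed, and thanks to $\tau(G)=-1$ together with $H_G\geq 0$ it is bounded. Pick a top-dimensional regular point $y'\in C$: strictness of $\tau(F)F(y')>0$ (bounded below by a positive element of $\R'$) together with the infinitesimality of $\eps'$ implies that $y'$ lifts into $T_\tau$. Let $D_0$ be the semi-algebraically connected component of $T_\tau$ containing $y'$. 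Arguing as in Proposition~\ref{prop:limitofdef} (using Proposition~12.43 of \cite{BPRbook2} for connectedness of limits), $\lim_{\zeta,\eps'}D_0$ is a connected subset of $\ZZ(Q,\R'^k)\cap\{\tau(F)F\geq 0\,\forall F\}=\overline{\RR(\tau,\ZZ(Q,\R'^k))}$ containing $y'$, hence contained in $\overline{C}$.

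Next, mimicking the proof of Proposition~\ref{prop:criticallocusontube}, use the good-coordinate assumption to ensure that $y'$ is isolated in $\ZZ(Q,\R'^k)\cap\pi_{[k-k'+1,k]}^{-1}(y'')$ with $y''=\pi_{[k-k'+1,k]}(y')$. The connected component $D_{y''}$ of $D_0\cap\pi_{[k-k'+1,k]}^{-1}(y'')$ containing (the extension of) $y'$ is a closed, bounded infinitesimal neighborhood of $y'$, externally bounded by $\{\Def=0\}$. Let $z$ be a point of $D_{y''}$ where $X_{k-k'}$ attains its maximum; then $\Def(z)=0$. Set $\mathcal{P}''=\{F\in\mathcal{P}':(1-\eps')F(z)-\tau(F)\eps'H_F(z)=0\}$. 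A Karush--Kuhn--Tucker analysis at $z$, combined with the transversality provided by Proposition~\ref{prop:general_pos} and iterated if necessary on lower-codimension strata of $D_0$, shows that $z$ is a zero of every polynomial in $\Cr_{k-k'-1}(\Def)\cup\{(1-\eps')F-\tau(F)\eps'H_F:F\in\mathcal{P}''\}$, with $|\mathcal{P}''|\leq k'$ (otherwise the complete intersection would be empty near $z$).

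Let $D$ be the semi-algebraically connected component of this algebraic set containing $z$; by Proposition~\ref{prop:general_pos} it is a smooth subvariety of dimension $k'-|\mathcal{P}''|$, and since it is contained in the infinitesimal neighborhood of $y'\in C$ carved out by $D_0$ it is bounded, with $\lim_\zeta D\subset\overline{C}$ and $\lim_\zeta z\in\lim_\zeta D\cap C$. The main obstacle is the Karush--Kuhn--Tucker step of the previous paragraph: a naive Lagrange analysis at $z$ produces a mixed condition on $\nabla\Def$ and the $\nabla F'$'s that does not obviously yield $\partial\Def/\partial X_i=0$ for $i<k-k'$, and recovering the clean form $\Cr_{k-k'-1}(\Def)$ demanded by the statement requires an iterative maximization that peels off the active sign-condition constraints one at a time, exploiting the genericity of $H$ and of the $H_F$'s. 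This parallels the strategy for sample points on sign conditions in \cite[Chapter 13]{BPRbook2}.
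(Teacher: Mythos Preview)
Your approach diverges from the paper's at a structural level, and the obstacle you yourself flag is genuine and not resolved by the vague appeal to ``iterative maximization''.

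The paper does \emph{not} bundle the tube constraint $\Def\leq 0$ and the perturbed sign-condition constraints into a single set $T_\tau$ and then extract a critical point.  Instead it separates the two steps cleanly.  First, for any $x\in C$ it invokes Proposition~\ref{prop:criticallocusontube} (applied over $\tilde{\R}=\R'$) to produce a point $y\in\ZZ(\Cr_{k-k'-1}(\Def(Q,H,\zeta)),\R'\langle\zeta\rangle^k)$ with $\lim_\zeta y=x$; since $\RR(\tau,\R'^k)$ is open, $y$ already lies in $\RR(\tau,\R'\langle\zeta\rangle^k)$.  Thus one obtains a semi-algebraically connected component $C'$ of
\[
\ZZ(\Cr_{k-k'-1}(\Def(Q,H,\zeta)),\R'\langle\zeta\rangle^k)\cap\RR(\tau,\R'\langle\zeta\rangle^k)
\]
with $\lim_\zeta C'\subset\overline{C}$ and $\lim_\zeta C'\cap C\neq\emptyset$.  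At this point the equations in $\Cr_{k-k'-1}(\Def)$ are already satisfied identically on $C'$; there is no mixing of gradients to worry about.  The second step is then a standard sample-point argument (the paper cites Proposition~13.2 of \cite{BPRbook2}) carried out entirely on the $k'$-dimensional set $\ZZ(\Cr_{k-k'-1}(\Def))$: one uses the general-position fact from Proposition~\ref{prop:general_pos} that no $k'+1$ of the perturbed polynomials $(1-\eps')F-\tau(F)\eps'H_F$ can vanish simultaneously on this variety, which forces the desired $\mathcal{P}''$ with $\card\,\mathcal{P}''\leq k'$.

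Your route, by contrast, maximizes a coordinate on the slice $D_{y''}$ of the combined set $T_\tau$.  The KKT condition at the maximizer $z$ says only that $e_{k-k'}$ lies in the span of $\nabla\Def(z)$, the gradients of the active perturbed constraints, and the $e_j$ for $j>k-k'$.  Projecting to the first $k-k'-1$ coordinates gives a linear relation among $\partial_i\Def(z)$ and the $\partial_i$ of the active $F$-perturbations, not the vanishing of each $\partial_i\Def(z)$ separately.  Nothing in your sketch explains how ``peeling off'' constraints would force the multipliers on the $F$-constraints to vanish; generically they will not.  A second, independent gap is your claim that the connected component $D$ of the algebraic set through $z$ stays inside $D_0$: an algebraic component is not confined by the inequalities defining $T_\tau$, so $\lim_\zeta D\subset\overline{C}$ does not follow from your construction.

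In short, the paper's decoupling---first land on the critical locus via Proposition~\ref{prop:criticallocusontube}, then handle the sign conditions on that $k'$-dimensional object---is precisely what sidesteps the KKT mixing you encountered.
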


\begin{proof}
Let $V = \ZZ(Q,\R'^k)$.
By Proposition \ref{prop:criticallocusontube}, with $\tilde{\R}$ substituted by $\R'$, we have that,
for each $x \in C$ there exists
$y \in \ZZ(\Cr_{k-k'-1}(\Def(Q,H,\zeta), \R'\langle \zeta\rangle^k)$ such that
$\lim_\zeta y = x$.

Moreover, using the fact that
$\RR(\tau,\R'^k)$ is open we have that
\[
y \in \ZZ(\Cr_{k-k'-1}(\Def(Q,H,\zeta)), \R'\langle \zeta\rangle^k)
\cap \RR(\tau,\R'^k).
\]

Thus, there exists a semi-algebraically
connected component $C'$
of
$$\ZZ(\Cr_{k-k'-1}(\Def(Q,H,\zeta)), \R'\langle\zeta\rangle^k)
\cap \RR(\tau,\R'^k),
$$
such that
$\lim_\zeta C' \subset \overline{C}$,
and $\lim_\zeta C' \cap C \neq \emptyset$.

Note that the closure $\overline{C}$ is a semi-algebraically
connected component of
\[
\RR(\overline{\tau},\ZZ(Q,\R'^k)),
\]
where $\overline{\tau}$ is the formula
$\wedge_{F \in \mathcal{P}'} (\tau(F) F \geq 0)$.

The proof of the proposition now follows the proof of
Proposition 13.2 in \cite{BPRbook2}, and uses
the fact that the set of  polynomials
$\bigcup_{F \in \mathcal{P}'}\{(1 -\eps') F - \tau(F)\eps' H_P\}$
has the property that, no $k'+1$ of distinct elements of this set
can have a common zero in
$\ZZ(\Cr_{k-k'-1}(\Def(Q,H,\zeta)), \R'\la\zeta,\eps'\ra^k)$
by Proposition \ref{prop:general_pos}.
\end{proof}

\begin{proof}[ Proof of Theorem \ref{thm:main}]
Using the conical structure at infinity of semi-algebraic sets, we have the following equality,
$$
\begin{aligned}
&\sum_{\sigma \in \{-1,1,0\}^\mathcal{P}} b_0 (\RR(\sigma,\Zer(\mathcal{Q},\R^k)))\\
=&\sum_{\sigma \in \{-1,1,0\}^\mathcal{P}} b_0 (\RR(\sigma,\Zer(\mathcal{Q},\R\langle 1/\Omega \rangle^k)\cap B_k(0,\Omega))).
\end{aligned}
$$

By Proposition \ref{prop:nonstrict-to-strict}
it suffices to bound the number of semi-algebraically
connected components of the realizations
$\RR(\tau, \ZZ(Q,\R'^k)\cap B_k(0,\Omega))$,
where $\tau \in \{-1,1\}^{\mathcal{P}'}$ satisfying
\begin{eqnarray}
\label{eqn:tau}
\tau(F) &=& -1, \mbox{ if } F = G, \nonumber \\
\tau(F) &=&  1   \mbox{ if } F = P - \eps \mbox{ or } F = P + \delta \mbox{ for some } P \in \mathcal{P},\\
\tau(F) &=& -1, \mbox { otherwise}. \nonumber
\end{eqnarray}

Using Proposition \ref{prop:main} it suffices to bound the number
of semi-algebraically connected components which are bounded over
$\R'$ of the real algebraic sets
$$
\displaystyle{
\ZZ(\Cr_{k-k'-1}(\Def(Q,H,\zeta)) \cup
\bigcup_{F \in \mathcal{P}''}\{(1 -\eps') F + \tau(F) \eps' H_F\},
\R'\langle\zeta,\eps'\rangle^k)
}
$$
for all $\mathcal{P}'' \subset \mathcal{P}'$ with
$\card \;\mathcal{P''} =j \leq k'$ and all
$\tau \in \{-1,1\}^{\mathcal{P}''}$
satisfying Eqn. \ref{eqn:tau}.

Using Theorem \ref{thm:main2}
we get that the number of such
components is
bounded by
\[
\textstyle\binom{k+1}{k-k'+\#\mathcal{P}''+1}\; (2 d_0)^{k-k'}\;
{d}_{\mathcal{P}''}\;\max_{F\in \mathcal{P}''}\{ 2d_0,\deg(F) \}^{k'-\#\mathcal{P}''}+2(k-\#\mathcal{P}''+1),
\]
where ${d}_{\mathcal{P}''} = \prod_{F \in \mathcal{P}''} \deg(F)$.

Each $F\in \mathcal{P}'\setminus \{G\}$ is of the form $F\in \{P\pm \ep,P\pm \delta\}$ for some $P \in \mathcal{P}$, and the algebraic sets defined by each of these four polynomials are disjoint.
Thus, we have that
$$\displaylines{
\sum_{\sigma \in \{0,1,-1\}^{\mathcal{P}}} b_0(\RR(\sigma,\ZZ(Q,\R^k)))
}
$$
is bounded by
$$
\displaylines{
\sum_{{\mathcal{I}\subset \mathcal{P}}\atop \#\mathcal{I}\leq k'}
4^{\#\mathcal{I}}\left(\textstyle\binom{k+1}{k-k'+\#\mathcal{I}+1}  \; (2 d_0)^{k-k'}\; {d}_{\mathcal{I}}\; \max_{P\in \mathcal{I}}\{2d_0,d_P \}^{k'-\#\mathcal{I}}  +2(k-\#\mathcal{I}+1)\right).
}$$
\end{proof}

\section{Applications}
\label{sec:applications}
There are several applications of the bound on the number of semi-algebraically
connected components of sign conditions of a family of real polynomials in
discrete geometry.
We discuss below
an application for bounding the number of geometric
permutations of $n$ well separated convex bodies in $\re^d$ induced
by $k$-transversals.

In \cite{GPW96} the authors reduce the problem
of bounding
the number of geometric
permutations of $n$ well separated convex bodies in $\re^d$ induced
by $k$-transversals
to bounding the number
of semi-algebraically connected components realizable sign conditions
of
\[
{2^{k+1} -2 \choose k}{n \choose k+1}
\]
polynomials in $d^2$ variables,
where each polynomial has degree at most $2k$,
on an algebraic variety (the real Grassmannian of $k$-planes in
$\re^d$) in
$\re^{d^2}$ defined by polynomials of degree $2$. The real Grassmanian
has dimension $k(d-k)$. Applying Theorem \ref{thm:main} we obtain that the
number of semi-algebraically connected components of all realizable
sign conditions in this case is bounded by
$$
\displaylines{
\left (k {2^{k+1} -2 \choose k}{n \choose k+1}\right)^{k(d-k)}\; (O(1))^{d^2},
}
$$
which is a strict improvement of the bound,
$$
\displaylines{
\left ({2^{k+1} -2 \choose k}{n \choose k+1}\right)^{k(d-k)}\; (O(k))^{d^2},
}
$$
in \cite[Theorem 2]{GPW96} (especially in the case
when $k$ is close to $d
)$.

As mentioned in the introduction our bound
might also have some relevance in a new method which has been
developed for  bounding the number of incidences between points and
algebraic varieties of constant degree, using a decomposition
technique
based on the polynomial ham-sandwich cut theorem
\cite{Guth-Katz,Matousek11b,Solymosi-Tao,Zahl}.



\chapter{Homotopy types of limits and additive complexity}{\label{ch:homotopy}}

\newcommand{\constM}{(p+1)(k+a+2)+2k\binom{p+1}{2}}
\newcommand{\constMprime}{(p+1)(s+2)+3\binom{p+1}{2}+3}


In tame geometries, sets of bounded description complexity have several finiteness properties including having finitely many connected components, finitely many homeomorphism types, etc.  While the fact that these numbers are finite follows from Hardt's triviality theorem for some notions of description complexity (e.g., dense format), bounds on the number of connected components or the number of homeomorphism types that can occur do not follow from Hardt's theorem for some notions of description complexity.

In this chapter\footnote{The results of this chapter have already been described in a joint work with S. Basu \cite{Barone-Basu11b}, to which we refer for some of the proofs.}
we prove that the number of
distinct
homotopy types of limits of one-parameter
semi-algebraic families of closed and bounded semi-algebraic sets is
bounded singly exponentially in the additive complexity of any
quantifier-free first order formula defining the family.  As an
important consequence, we derive that the number of
distinct
homotopy types of
semi-algebraic subsets of $\mathbb{R}^k$ defined by a quantifier-free first
order formula $\Phi$, where the sum of the additive complexities of
the polynomials appearing in $\Phi$ is at most $a$, is bounded by
$2^{(k+a)^{O(1)}}$.
This proves a conjecture made in \cite{BV06}.

\hide{
Roughly speaking the additive complexity of a
polynomial (see Definition \ref{def:rational_additive} below for a
precise definition) is bounded from above by the number of additions
in any straight line program (allowing divisions) that computes the
values of the polynomial at generic points of $\mathbb{R}^n$.
This measure of complexity strictly generalizes the more familiar
measure of complexity of real polynomials
based on counting the number of monomials in the support
(as in Khovanski\u{\i}'s  theory of ``Fewnomials'' \cite{Kho}),
and is thus of considerable interest in quantitative real
algebraic geometry.
Additive complexity of real univariate polynomials was first considered
in the context of computational complexity theory by Borodin and Cook
\cite{Borodin-Cook76}, who proved an effective bound on the number
of real zeros of an univariate
polynomial in terms of its additive
complexity. This result was further improved upon by
Grigoriev \cite{Grigoriev82}
and Risler \cite{Risler85} who applied
Khovanski\u{\i}'s  results on fewnomials \cite{Kho}.
A surprising
fact conjectured in \cite{BRbook}, and proved by Coste \cite{CosteFew}
and van den Dries \cite{Dries}, is that the number of topological
types of real algebraic varieties defined by polynomials of bounded
additive complexity is finite. \footnote{Note that what we call
  ``additive complexity'' is called ``rational additive complexity''
  in \cite{BV06}, and what we call ``division-free additive
  complexity'' is called ``additive complexity'' there.}
}

In order to state our result
precisely, we need a few preliminary definitions.

\begin{definition}
\label{def:additive}
The \emph{division-free additive complexity} of a polynomial is a
non-negative integer,
and we say that
a polynomial $P \in \mathbb{R}[X_1, \ldots ,X_k]$ has \emph{division-free additive
complexity
at most $a$},
$a\geq 0$, if there are polynomials $Q_1, \ldots , Q_a \in \mathbb{R}[X_1,
  \ldots ,X_k]$ such that
\begin{itemize}
\item[(i)]
$Q_1=u_1X_{1}^{\alpha_{11}} \cdots X_{k}^{\alpha_{1k}} +
v_1X_{1}^{\beta_{11}} \cdots X_{k}^{\beta_{1
k}}$,\\
where $u_1, v_1 \in \mathbb{R}$, and
$\alpha_{11}, \ldots ,\alpha_{1k}, \beta_{11}, \ldots , \beta_{1k} \in \N$;

\item[(ii)]
$Q_j=u_jX_{1}^{\alpha_{j1}} \cdots X_{k}^{\alpha_{j k}}
\prod_{1 \le i \le j-1}Q_{i}^{\gamma_{j i}} +
v_jX_{1}^{\beta_{j1}} \cdots X_{k}^{\beta_{j k}}\prod_{1 \le i \le j-1}Q_{i}^{\delta_{ji}}$,\\
where $1 < j \le a$, $u_j, v_j \in \mathbb{R}$, and
$\alpha_{j1}, \ldots ,\alpha_{j k}, \beta_{j1}, \ldots , \beta_{j k},
\gamma_{ji}, \delta_{ji} \in \N$ for $1 \le i <j$;

\item[(iii)]
$P= cX_{1}^{\zeta_{1}} \cdots X_{k}^{\zeta_{k}}\prod_{1 \le j \le a}Q_{j}^{\eta_{j}}$,\\
where $c \in \mathbb{R}$, and $\zeta_1, \ldots , \zeta_k, \eta_1, \ldots ,\eta_a \in \N$.
\end{itemize}
In this case, we say that the above sequence of equations
is a \emph{division-free additive representation} of $P$ of length $a$.
\end{definition}

In other words, $P$ has division-free additive complexity at most $a$ if
there exists a straight line program which,
starting with variables
$X_1, \ldots ,X_m$ and constants in $\mathbb{R}$ and
applying additions and multiplications,
computes $P$ and
which uses at most $a$ additions
(there is no bound on the number of multiplications).
Note that
the additive complexity of a polynomial (cf. Definition \ref{def:rational_additive})
is clearly at most its
division-free additive complexity, but can be much smaller (see
Example \ref{ex:rational_additive} below).

\begin{example}
\label{ex:additive}
The polynomial $P:=(X+1)^d \in \mathbb{R}[X]$ with $0<d \in \Z$, has $d+1$ monomials when expanded but division-free additive complexity at most 1.
\end{example}

\begin{notation}
We denote by
$\mathcal{A}^{\mathrm{div-free}}_{k,a}$ the family of ordered (finite) lists
${\mathcal P}=(P_1, \ldots , P_s)$ of polynomials $P_i \in \mathbb{R}[X_1,\ldots,X_k]$,
with the division-free additive complexity of every $P_i$ not exceeding $a_i$, with
$a=\sum_{1 \le i \le s}a_i$.
Note that $\mathcal{A}^{\mathrm{div-free}}_{k,a}$ is allowed to contain lists
of different sizes.
\end{notation}

Suppose that $\phi$ is a Boolean formula with atoms
$\{p_i,q_i,r_i \mid 1 \leq i \leq s\}$.
For an ordered list ${\mathcal P} = (P_1,\ldots,P_s)$ of polynomials
$P_i \in {\mathbb{R}[X_1,\ldots,X_k]}$, we denote by $\phi_{\mathcal P}$
the formula obtained from $\phi$ by replacing
for each $i,\ 1\leq i \leq s $, the atom $p_i$
(respectively, $q_i$ and $r_i$) by
$P_i= 0$ (respectively, by $P_i > 0$ and by $P_i < 0$).

\begin{definition}\label{def:hom_lists}
We say that two ordered lists ${\mathcal P} = (P_1,\ldots,P_s)$,
${\mathcal Q} = (Q_1,\ldots,Q_s)$
of polynomials $P_i, Q_i \in {\mathbb{R}[X_1,\ldots,X_k]}$ have the same \emph{homotopy
type} if for any Boolean formula $\phi$, the semi-algebraic sets defined by
$\phi_{\mathcal P}$ and $\phi_{\mathcal Q}$ are
homotopy equivalent.
Clearly, in order to be homotopy equivalent two lists should have equal size.
\end{definition}

\begin{example}
\label{eg:homotopy_types}
Consider the lists
$\mathcal{P} = (X_1,X_2^2,X_1^2+X_2^2+1)$
and $\mathcal{Q}= (X_1^3, X_2^4,1)$. It is easy to see that they
have the same homotopy type, since in this case
for
each Boolean formula
$\phi$ with $9$ atoms, the semi-algebraic sets
defined by $\phi_{\mathcal{P}}$ and $\phi_{\mathcal{Q}}$ are
identical.
A slightly more non-trivial example is provided by
$\mathcal{P} = (X_2 - X_1^2, X_2)$ and $\mathcal{Q} = (X_2, X_2 + X_1^2)$.
In this case,
for
each Boolean formula
$\phi$ with $6$ atoms, the semi-algebraic sets
defined by $\phi_{\mathcal{P}}$ and $\phi_{\mathcal{Q}}$ are
not identical but
homeomorphic. Finally, the singleton sequences
$\mathcal{P} = (X_2X_1(X_1-1))$ and $\mathcal{Q} = (X_2(X_1^2 - X_2^4))$
are homotopy equivalent. In this case the semi-algebraic sets
sets
defined by $\phi_{\mathcal{P}}$ and $\phi_{\mathcal{Q}}$ are
homotopy equivalent, but not necessarily
homeomorphic. For instance, the algebraic set
defined by $X_2X_1(X_1-1)=0$ is
homotopy equivalent to
the algebraic set defined by $X_2(X_1^2 - X_2^4)=0$, but they are not
homeomorphic to each other.
\end{example}

The following theorem is proved in \cite{BV06}.

\begin{theorem}\cite{BV06}
\label{thm:additive}
The number of
distinct
homotopy types of ordered lists in
$\mathcal{A}^{\mathrm{div-free}}_{k,a}$
does not exceed
\begin{equation}\label{eq:additive}
2^{O(k+a)^8}.
\end{equation}
In particular, if $\phi$ is any Boolean formula with $3s$ atoms,
the number of
distinct
homotopy types of the  semi-algebraic sets defined
by $\phi_{\mathcal P}$, where
${\mathcal P} = (P_1,\ldots,P_s)\in \mathcal{A}^{\mathrm{div-free}}_{k,a}$,
does not exceed (\ref{eq:additive}).
\end{theorem}

\begin{remark}
The bound 
in Theorem \ref{thm:additive}
is stated in a slightly different form than in the original paper, to take
into account the fact that by our definition the division-free
additive complexity of a polynomial (for example, that of a monomial)
is allowed to be $0$. This is not an important issue (see Remark
\ref{rem:zero} below).
\end{remark}

The additive complexity of a polynomial is defined
as follows \cite{Borodin-Cook76, Grigoriev82, Risler85, BRbook}.

\begin{definition}
\label{def:rational_additive}
A polynomial $P \in \mathbb{R}[X_1, \ldots ,X_k]$ is said to have
\emph{additive complexity}
at most $a$ if there are \emph{rational functions}
$Q_1, \ldots , Q_a \in \mathbb{R}(X_1, \ldots ,X_k)$ satisfying conditions
(i), (ii), and (iii)
in Definition \ref{def:additive}
with $\N$ replaced by $\Z$,
and we say that the above sequence of equations
is an \emph{additive representation} of $P$ of length $a$.
\end{definition}

\begin{example}
\label{ex:rational_additive}
The polynomial $X^d+ \cdots + X+1 =(X^{d+1}-1)/(X-1)\in \mathbb{R}[X]$
with $0<d \in \Z$, has  additive complexity
(but not division-free additive complexity) at most $2$ (independent
of $d$).
\end{example}

\begin{notation}
We denote by
$\mathcal{A}_{k,a}$ the family of ordered (finite) lists
${\mathcal P}=(P_1, \ldots , P_s)$ of polynomials $P_i \in \re[X_1,\ldots,X_k]$,
with the  additive complexity of every $P_i$ not exceeding $a_i$,
with
$a=\sum_{1 \le i \le s}a_i$.
\end{notation}

It was conjectured in \cite{BV06} that
Theorem \ref{thm:additive} could be strengthened by replacing
$\mathcal{A}^{\mathrm{div-free}}_{k,a}$ by $\mathcal{A}_{k,a}$.
In this chapter we prove this conjecture. More formally, we prove

\begin{theorem}
\label{thm:main1}
The number of
distinct
homotopy types of ordered lists in
$\mathcal{A}_{k,a}$
does not exceed
$2^{
(k+a)^{O(1)}}$.
\end{theorem}

\subsection{Additive complexity and limits of semi-algebraic sets}
The proof of Theorem \ref{thm:additive} in \cite{BV06} proceeds by reducing the
problem to the case of bounding the number of
distinct
homotopy types of
semi-algebraic sets defined by polynomials having a bounded number
of monomials.
The reduction
which was already used by Grigoriev \cite{Grigoriev82} and Risler \cite{Risler85}
is as follows.
Let  ${\mathcal P} \in \mathcal{A}^{\mathrm{div-free}}_{k,a}$ be an ordered list.
For each polynomial $P_i \in {\mathcal P}$, $1 \le i \le s$,
consider the sequence of polynomials
$Q_{i 1}, \ldots, Q_{i a_i}$ as in Definition~\ref{def:additive}, so that
$$
P_i:=c_i X_{1}^{\zeta_{i 1}} \cdots X_{k}^{\zeta_{i k}}\prod_{1 \le j \le a_i}Q_{i j}^{\eta_{i j}}.
$$
Introduce $a_i$ new variables
$Y_{i1}, \ldots ,Y_{i a_i}$.
Fix a semi-algebraic set $S \subset \mathbb{R}^m$,
defined by a formula $\phi_{\mathcal P}$.
Consider
the semi-algebraic set $\widehat S$, defined by the conjunction of $a$
3-nomial equations obtained from equalities in (i), (ii) of
Definition~\ref{def:additive} by replacing
$Q_{i j}$ by $Y_{i j}$
for all $1 \le i \le s$, $1 \le j \le a_k$, and the formula $\phi_{\mathcal P}$
in which every occurrence of an atomic formula
of the kind $P_k \ast 0$, where $\ast \in \{ =, >, < \}$,
is replaced by the formula
$$
c_i X_{1}^{\zeta_{i 1}} \cdots X_{k}^{\zeta_{i k}}\prod_{1 \le j \le a_i}Y_{i j}^{\eta_{i j}}
\ast 0.
$$
Note that $\widehat S$ is a
semi-algebraic set in $\mathbb{R}^{k+a}$.

Let $\rho:\> \mathbb{R}^{k+a} \to \mathbb{R}^k$
be the projection map on the subspace
spanned by
$X_1, \ldots ,X_k$.
It is clear that the restriction $\rho_{\widehat S}:\> \widehat S \to S$ is a
homeomorphism, and moreover $\widehat S$ is defined by polynomials having
at most $k+a$ monomials.
Thus, in order to bound the number of
distinct
homotopy types
for $S$, it suffices to bound the same number for $\widehat{S}$, but
since   $\widehat S$ is defined by at most $2 a$ polynomials in
$k+a$ variables having at most $k+a$ monomials in total,
we have reduced the problem of bounding the number
of distinct homotopy types occurring in
$\mathcal{A}^{\mathrm{div-free}}_{k,a}$,
to that of bounding the the number
of distinct homotopy types of semi-algebraic sets
defined by at most $2 a$ polynomials
in $k+a$ variables,
with the total number of monomials appearing bounded by $k+a$.
This allows us to apply a bound proved in the fewnomial case in \cite{BV06},
to obtain
a singly exponential bound on the number of distinct homotopy types
occurring in  $\mathcal{A}^{\mathrm{div-free}}_{k,a}$.

Notice that for the map $\rho_{\widehat S}$ to be a homeomorphism it is
crucial that the exponents $\eta_{i j},\gamma_{i j},\delta_{i j}$
be non-negative, and this restricts
the proof to the case of division-free additive complexity.
We overcome this difficulty as follows.

Given a polynomial $F \in \mathbb{R}[X_1,\ldots,X_k]$ with
additive complexity bounded by $a$, we
prove that $F$ can be expressed as a quotient $\frac{P}{Q}$ with
$P,Q \in \mathbb{R}[X_1,\ldots,X_k]$ with
the sum of the \emph{division-free} additive complexities of $P$ and $Q$ bounded by $a$
(see Lemma \ref{lem:equivalence} below).
We then express the set of real zeros of $F$ in $\mathbb{R}^k$ inside any
fixed closed ball
as the Hausdorff limit of a
one-parameter semi-algebraic family defined using the polynomials
$P$ and $Q$ (see
Proposition \ref{prop:sectionfivemain} and the accompanying
Example \ref{eg:main} below).

While the limits of one-parameter semi-algebraic families defined by
polynomials with bounded division-free additive complexities
themselves can have complicated descriptions which cannot be described
by polynomials of bounded division-free additive complexity, the
topological complexity (for example, measured by their Betti numbers)
of such limit sets are well controlled.  Indeed, the problem of
bounding the Betti numbers of Hausdorff limits of one-parameter
families of semi-algebraic sets was considered by Zell in
\cite{Hausdorff}, who proved a singly exponential bound on the Betti
numbers of such sets.  We prove in this chapter (see
Theorems \ref{thm:main_weak} and  \ref{thm:main} below)
that the number of
distinct
homotopy types of such limits
can indeed be bounded singly exponentially in terms of the format of
the formulas defining the one-parameter family.  The techniques
introduced by Zell in \cite{Hausdorff} (as well certain semi-algebraic
constructions described in \cite{BZ09}) play a crucial role in the
proof of our bound.  These intermediate results may be of independent
interest.

Finally, applying
Theorem \ref{thm:main_weak}
to the one-parameter
family referred to in the above paragraph, we obtain a bound
on the number of
distinct
homotopy types of real algebraic varieties defined by
polynomials having bounded additive complexity. The semi-algebraic
case requires certain additional techniques and is dealt with in
Section \ref{subsec:semi-algebraic}.

\subsection{Homotopy types of limits of semi-algebraic sets}
In order to state our results on bounding the number of
distinct
homotopy types of limits
of one-parameter families of semi-algebraic sets we need to introduce
some notation.

\begin{notation}
For any first order formula $\Phi$
with $k$ free variables, if $\mathcal{P}\subset \mathbb{R}[X_1,\dots,X_k]$
consists of the polynomials appearing in $\Phi$, then we call $\Phi$ a $\mathcal{P}$-formula.
\end{notation}

\begin{remark}{\label{rem:zero}}
A monomial has additive complexity
0, but every $\mathcal{P}$-formula with $\mathcal{P}\subset \mathbb{R}[X_1,\ldots,X_k]$ containing only monomials is equivalent to a $\mathcal{P}'$-formula, where $\mathcal{P}'=\{X_1,\ldots,X_k\}$.
In particular, if $\phi$ is a $\mathcal{P}$-formula with
(division-free) additive format bounded by $(a,k)$, then $\phi$ is equivalent to a $\mathcal{P}'$-formula having
(division-free)
additive format bounded by $(a,k)$ and such that the cardinality of $\mathcal{P}'$ is
at most $a+k$.
\end{remark}

\section{Proof of a Weak Version of Theorem \ref{thm:main}}
\label{sec:main}

In this section we prove the following weak version of Theorem \ref{thm:main}
(using \emph{division-free} additive format rather than additive format) which
is needed in the proof of Theorem \ref{thm:main1}.

\begin{theorem}
\label{thm:main_weak}
For each $a,k\in \mathbb{N}$,
there exists a finite collection
$\mathcal{S}_{k,a}$
of semi-algebraic subsets of
$\mathbb{R}^N$, $N=(k+2)(k+1)+\binom{k+2}{2}$, with
$\card\; \mathcal{S}_{k,a}= 2^{O(k(k^2+a))^{8}} =
2^{(k+a)^{O(1)}}
$,
which satisfies
the following property.
If
$\TT\subset \mathbb{R}^k\times \mathbb{R}_+$
is a bounded semi-algebraic set described by a formula having
\emph{division-free}
additive format bounded by $(a,k+1)$
such that $\TT_t$ is closed for each $t>0$,
then
$\TT_{\limit}$
is
homotopy equivalent to some $S\in \mathcal{S}_{k,a}$
(cf. Notation \ref{not:limit}).
\end{theorem}

\subsection{Outline of the proof}
The main steps in the
proof of Theorem \ref{thm:main_weak}
are as follows.  Let $\TT \subset
\mathbb{R}^{k}\times
\mathbb{R}_{+}$
be a bounded semi-algebraic set,
such that
$\TT_t$ is closed for each $t \in \mathbb{R}$,
and let
$\TT_{\limit}$
be as in Notation \ref{not:limit}.

We first prove that for all  small enough $\lambda>0$,
there exists a semi-algebraic surjection
$f_\lambda:\TT_\lambda \rightarrow \TT_{\limit}$
which is metrically close to the identity map $1_{\TT_\lambda}$
(see Proposition \ref{prop:12} below).
Using a semi-algebraic realization of the fibered join described in
\cite{BZ09}
(see also \cite{GVZ04}),
we then
consider, for any fixed $p \geq 0$, a semi-algebraic set
$\mathcal{J}^p_{f_\lambda}(\TT_\lambda)$ which is $p$-equivalent to
$\TT_\limit$
(see Proposition \ref{prop:5}). The
definition of $\mathcal{J}^p_{f_\lambda}(\TT_\lambda)$ still involves the map
$f_\lambda$, whose definition is not simple, and hence we cannot
control the topological type of $\mathcal{J}^p_{f_\lambda}(\TT_\lambda)$
directly.
However, the fact that $f_\lambda$ is
metrically close to the identity map
enables us to adapt the main technique in \cite{Hausdorff} due to Zell. We
replace $\mathcal{J}^p_{f_\lambda}(\TT_\lambda)$ by another semi-algebraic set,
which we denote by
$\mathcal{D}^p_\eps(\TT)$ (for $\eps>0$ small enough),
which is  homotopy equivalent to
$\mathcal{J}^p_{f_\lambda}(\TT_\lambda)$, but
whose definition no longer involves the
map $f_\lambda$ (Definition \ref{defncalD}).
We can now bound the format of $\mathcal{D}^p_\eps(\TT)$
in terms of the format of the formula defining $\TT$.
This key result is summarized in Proposition  \ref{prop:main}.

We first recall the definition of $p$-equivalence
(see, for example, \cite[page 144]{tomDieck08}).

\begin{definition}[$p$-equivalence]
\label{def:p-equivalence}
A map $f: A \rightarrow B$ between two topological spaces is called a
\emph{$p$-equivalence} if the induced
map
\[
f_*: \boldsymbol{\pi}_i(A,a) \rightarrow \boldsymbol{\pi}_i(B,f(a))
\]
is,
 for each $a \in A$,
bijective
for  $0 \leq i < p$, and
surjective
for $i=p$,
and we say that $A$ is \em{$p$-equivalent} to $B$.
\end{definition}

\begin{proposition}
\label{prop:main}
Let
$\TT\subset \mathbb{R}^k\times \mathbb{R}_+$
be a bounded
semi-algebraic set such that $\TT_t$ is closed
for each  $t>0$, and let
$p \geq 0$.
Suppose also that $\TT$ is described by a formula
having
(division-free)
additive
format
bounded by
$(a,k+1)$
and dense format
$(s,d,k+1)$.
Then, there exists a semi-algebraic set
$\mathcal{D}^p\subset \mathbb{R}^N$,
$N=(p+1)(k+1)+\binom{p+1}{2}$,
 such  that $\mathcal{D}^p$ is
$p$-equivalent to
$\TT_{\limit}$ (cf. Notation \ref{not:limit})
and such that $\mathcal{D}^p$ is described by a
formula having
(division-free)
additive
 format
bounded by $(M,N)$
and dense format
$(M',d+1,N)$,
where
$M=\constM$
and $M'=\constMprime$.
\hide{
$$\begin{aligned}
M&=\textstyle{\constM}, \\
M'&=\textstyle{\constMprime}.
\end{aligned}$$
}

\end{proposition}

Finally, Theorem \ref{thm:main_weak} is an easy consequence of
Proposition \ref{prop:main}.

\begin{notation}{\label{not:limit}}
\hide{
For
any $k \geq 1$, and
 $ 1\leq p \leq q \leq k$,
we denote by  $\pi_{[p,q]}: \mathbb{R}^k=\mathbb{R}^{[1,k]}\rightarrow \mathbb{R}^{[p,q]}$ the projection
$$(x_1, \ldots, x_k)\mapsto (x_p, \ldots, x_q)$$
(omitting the dependence on $k$ which should be clear from context).
In case $p=q$ we
will denote by $\pi_p$ the projection $\pi_{[p,p]}$. %
For any semi-algebraic subset
$X\subset \mathbb{R}^{k+1}$,
 and $\lambda\in \mathbb{R}$,
we denote by $X_\lambda$ the following semi-algebraic subset of $\mathbb{R}^k$:
$$X_\lambda = \pi_{[1,k]}(X\cap \pi_{k+1}^{-1}(\lambda)).$$
}
We denote by $\mathbb{R}_+$ the set of strictly positive elements of $\mathbb{R}$.
If additionally $X\subset \mathbb{R}^k\times \mathbb{R}_+$, then we denote by $X_{\limit}$
the following semi-algebraic subset of $\mathbb{R}^k$:
$$ X_{\limit} := \pi_{[1,k]}(\overline{X} \cap \pi_{k+1}^{-1}(0)),$$
where $\overline{X}$ denotes the topological closure of $X$ in
$\mathbb{R}^{k+1}$.
\zz
\end{notation}

We have the following theorem
which establishes a singly exponential bound on
the number of
distinct
homotopy types of the Hausdorff limit
of a one-parameter family of compact semi-algebraic sets defined by a
first-order formula of bounded additive format. This result complements
the result in \cite{BV06} giving singly exponential bounds on the
homotopy types of semi-algebraic sets defined by first-order
formulas having bounded
division-free
additive format on one hand, and the
result of Zell \cite{Hausdorff} bounding the Betti numbers
of the Hausdorff
limits of one-parameter families of semi-algebraic sets on the other,
and could be of independent interest.

\begin{theorem}
\label{thm:main}
For each $a,k\in \mathbb{N}$,
there exists a finite collection
$\mathcal{S}_{k,a}$
of semi-algebraic subsets of
$\mathbb{R}^N$, $N=(k+2)(k+1)+\binom{k+2}{2}$, with
$\card\; \mathcal{S}_{k,a}
=2^{(k+a)^{O(1)}}$,
which satisfies
the following property.
If $\TT\subset \mathbb{R}^k\times \mathbb{R}_+$
is a bounded semi-algebraic set described by a formula having
additive format bounded by $(a,k+1)$
such that $\TT_t$ is closed for each $t>0$,
then
$\TT_{\limit}$
is
homotopy equivalent to some
$S\in \mathcal{S}_{k,a}$
(cf. Notation \ref{not:limit}).
\end{theorem}

The rest of the chapter is devoted to the proofs of Theorems \ref{thm:main}
and \ref{thm:main1} and is organized as follows. We first prove
a weak version (Theorem \ref{thm:main_weak}) of
Theorem \ref{thm:main} in Section \ref{sec:main}, in which the term
``additive complexity'' in the statement of Theorem \ref{thm:main}
is replaced by the term
``division-free additive complexity''.
Theorem \ref{thm:main_weak} is
then used  in Section \ref{sec:main1}
to prove Theorem \ref{thm:main1} after introducing some additional
techniques, which in turn is
used to prove Theorem \ref{thm:main}.

\subsection{Topological definitions}
We first recall the basic definition of the
the iterated join of a topological space.

\begin{notation}
\label{not:simplex}
For each
$p \geq 0$,
we denote
\[
\Delta_{[0,p]} = \{\textbf{t} = (t_0,\ldots,t_p)\mid t_i \geq  0, 0 \leq i \leq p, \sum_{i=0}^p
t_i = 1,
\}
\]
the standard $p$-simplex.
For each subset $I = \{i_0,\ldots,i_m\}, 0 \leq i_0 < \cdots < i_m \leq p$,
let $\Delta_I \subset \Delta_{[0,p]}$ denote the face
$$\Delta_I = \{ \textbf{t} = (t_0,\ldots,t_p)
\in \Delta_{[0,p]}\; \mid \; t_i = 0 \mbox{ for all } i \not\in I \}
$$
of $\Delta_{[0,p]}$.
\end{notation}

\hide{
\begin{definition}
\label{def:twofoldjoin}
The join $J(X,Y)$  of two topological spaces $X$ and $Y$ is defined by
\begin{equation}
\label{eqn:definitionoftwofoldjoin}
J(X,Y) \defeq X\times Y
\times \Delta_{[0,1]}/\sim,
\end{equation}
where
\[
(x,y,t_0,t_1) \sim (x',y',t_0,t_1)
\]
if
$t_0 = 1,x = x'$ or  $t_1=1, y= y'$.
\end{definition}

 \begin{figure}[hbt]
         \centerline{
           \scalebox{0.5}{
             \input{join.pstex_t}
             }
           }
         \caption{Join of two segments}
         \label{fig:join}
 \end{figure}

Intuitively, $J(X,Y)$ is obtained by joining each point of $X$ with
each point of $Y$ by a unit interval (see Figure \ref{fig:join}).

By iterating the above definition with the same space $X$ we obtain
}

\begin{definition}
\label{def:pfoldjoin}
For
$p \geq 0$,
the $(p+1)$-fold join
$J^p(X)$ of a topological space
$X$ is
\begin{equation}
\label{eqn:definitionofjoin}
J^p(X) \defeq \underbrace{X\times\cdots\times X}_{(p+1)\mbox{ times }}
\times \Delta_{[0,p]}/\sim,
\end{equation}
where
\[
(x_0,\ldots,x_p,t_0,\ldots,t_p) \sim (x_0',\ldots,x_p',t_0,\ldots,t_p)
\]
if for each $i$ with $t_i \neq 0$, $x_i = x_i'$.
\end{definition}

In the special situation
when $X$ is a semi-algebraic set,
the space $J^p(X)$ defined above is not immediately
a semi-algebraic set, because of taking quotients.
We now define a semi-algebraic set,
$\mathcal{J}^p(X)$, that is
homotopy equivalent
to $J^p(X)$.

\hide{
We use the lower case bold-face notation $\mathcal{\textbf{x}}$ to denote a
point
$\mathcal{\textbf{x}}=(x_1,\dots,x_k)$ of $\mathbb{R}^k$, and upper-case $\X=(X_1,\dots,X_k)$
to denote a \emph{block of variables}.
}

Let $\Delta'_{[0,p]} \subset \mathbb{R}^{p+1}$ denote the set defined by
$$
\Delta'_{[0,p]} = \{ \textbf{t} = (t_0,\ldots,t_p) \in \mathbb{R}^{p+1} \;\mid\; \sum_{0 \leq i \leq p} t_i = 1 , |\textbf{t}|^2 \leq 
1\}.
$$

For each subset $I = \{i_0,\ldots,i_m\}, 0 \leq i_0 < \cdots < i_m \leq p$,
let $\Delta'_I \subset \Delta'_{[0,p]}$ denote
$$
\Delta'_I = \{ \textbf{t} = (t_0,\ldots,t_p)
\in \Delta'_{[0,p]}\; \mid \; t_i = 0 \mbox{ for all } i \not\in I \}.
$$

It is clear that the standard simplex
$\Delta_{[0,p]}$
is a deformation retract of $\Delta'_{[0,p]}$
via a deformation retraction,
$\rho_p: \Delta'_{[0,p]} \rightarrow \Delta_{[0,p]}$,
that restricts to a deformation retraction
$\rho_p|_{\Delta'_I}:\Delta'_I\rightarrow \Delta_I$ for each $I \subset [0,p]$.

We use the lower case bold-face notation $\mathcal{\textbf{x}}$ to denote a
point
$\mathcal{\textbf{x}}=(x_1,\dots,x_k)$ of $\mathbb{R}^k$
and upper-case $\X=(X_1,\dots,X_k)$
to denote a \emph{block of variables}.
In the following definition the role of the
$\binom{p+1}{2}$
variables
$(A_{ij})_{0 \leq i < j \leq p}$ can be safely ignored, since they are
all set to $0$. Their significance will be clear later.

\begin{definition}[The semi-algebraic join \cite{BZ09}%
    \label{def:semi-algebraic-join}]
For a semi-algebraic subset $X \subset \mathbb{R}^k$
contained in $B_k(0,R)$,
defined by a $\mathcal{P}$-formula
$\Phi$,
we define
$$
\begin{aligned}
\mathcal{J}^p(X)=&\{(\mathcal{\textbf{x}}^0,\dots,\mathcal{\textbf{x}}^p,
  \textbf{t},\textbf{a})\in  \mathbb{R}^{(p+1)(k+1)+\binom{p+1}{2}}|\\ &\quad
  \Omega^R(\mathcal{\textbf{x}}^0,\dots,\mathcal{\textbf{x}}^p,\textbf{t}) \wedge
  \Theta_1(\textbf{t},\textbf{a}) \wedge
  \Theta_2^\Phi (\mathcal{\textbf{x}}^0,\dots,\mathcal{\textbf{x}}^p,\textbf{t})
\},
\end{aligned}
$$
where
\begin{equation}
{\label{eqn:semi-algebraic-join}}
\begin{aligned}
\Omega^R \ :=\ & \quad \bigwedge_{i=0}^p (|\X^i|^2\leq R^2) \wedge |\T|^2\leq
1, \\
\Theta_1 \ :=\ & \quad \sum_{i=0}^p T_i=1 \wedge
\sum_{0 \leq i < j \leq p} A_{i j}^2=0, \\
\Theta_2^\Phi\ :=\ & \quad \bigwedge_{i=0}^p (T_i=0 \vee  \Phi(\X^i)), \\
\end{aligned}\end{equation}
We denote the formula $\Omega^R\wedge \Theta_1\wedge \Theta_2^\Phi$
by $\mathcal{J}^p(\Phi)$.
\end{definition}

\begin{notation}
For any
$\R\in \mathbb{R}_+$,
we denote by $B_k(0,R) \subset \mathbb{R}^k$,
the open ball of radius $R$ centered at the origin.
\end{notation}

\hide{

We introduce some more notation.

\begin{notation}
For any
$\R\in \mathbb{R}_+$,
we denote by $B_k(0,R) \subset \mathbb{R}^k$,
the open ball of radius $R$ centered at the origin.
\end{notation}

\begin{notation}
For
$P \in \mathbb{R}[X_1,\ldots,X_k]$,
we denote by $\Zer(P,\mathbb{R}^k)$ the real
algebraic set defined by $P=0$.
\end{notation}

\begin{notation}{\label{not:reali}}
For any first order formula $\Phi$
with $k$ free variables, we denote by $\Reali(\Phi)$ the semi-algebraic
subset of $\mathbb{R}^k$ defined by $\Phi$.
\hide{
Additionally, if $\mathcal{P}\subset \mathbb{R}[X_1,\dots,X_k]$
consists of the polynomials appearing in $\Phi$, then we call $\Phi$ a $\mathcal{P}$-formula.
}
\end{notation}

}

It is checked easily from Definition \ref{def:semi-algebraic-join} that
$$
\displaylines{
\mathcal{J}^p(X) \subset \left(\overline{B_{k}(0,R)}\right)^{p+1} \times \Delta'_{[0,p]} \times
\{\textbf{0}\},
}
$$
and that the deformation retraction
$\rho_p: \Delta'_{[0,p]} \rightarrow \Delta_{[0,p]}$
extends to a deformation retraction,
$\tilde{\rho}_p: \mathcal{J}^p(X) \rightarrow \tilde{\mathcal{J}}^p(X)$, where
$\tilde{\mathcal{J}}^p(X)$ is defined by
$$
\displaylines{
\tilde{\mathcal{J}}^p(X)=
\{
(\mathcal{\textbf{x}}^0,\dots,\mathcal{\textbf{x}}^p,
  \textbf{t},\textbf{a})\in \left(\overline{B_{k}(0,R)}\right)^{p+1} \times \Delta_{[0,p]} \times
\{\textbf{0}\} \;\mid\;
   \Theta_2^\Phi (\mathcal{\textbf{x}}^0,\dots,\mathcal{\textbf{x}}^p,\textbf{t})
\}.
}
$$
Finally, it is a consequence of the Vietoris-Beagle theorem
(see \cite[Theorem 2]{BWW06})
that
$\tilde{\mathcal{J}}^p(X)$ and $J^p(X)$ are homotopy equivalent.
We thus have, using notation introduced
above, that
\begin{proposition}
$\mathcal{J}^p(X)$ is homotopy equivalent to $J^p(X)$.
\end{proposition}

\begin{remark}
The necessity of defining $\mathcal{J}^p(X)$ instead of just
$\tilde{\mathcal{J}}^p(X)$ has to do with removing the inequalities
defining the standard simplex from the defining formula
$\mathcal{J}^p(\Phi)$, and this will
simplify certain arguments later in our exposition.
\end{remark}

We now generalize the above constructions and define joins over maps (the
topological and semi-algebraic joins defined above are special cases when the
map is a constant map to a point).

\begin{notationdefinition}
\label{notdef:fiberproduct}
\emph{
Let $f:A \rightarrow B$ be a map
between topological spaces $A$ and $B$.
For each $p \geq 0$,
we denote by $W_f^p(A)$ the
\emph{$(p+1)$-fold fiber product} of $A$ over $f$.
In other words
\[
W_f^p(A) = \{(x_0,\ldots,x_p) \in A^{p+1}
\mid
f(x_0) = \cdots = f(x_p)
\}.
\]}
\end{notationdefinition}

\begin{definition}[Topological join over a map]
\label{def:joinoveramap1}
Let $f:X \rightarrow Y$ be a map
between topological spaces $X$ and $Y$.
For
$p \geq 0$,
the
\emph{$(p+1)$-fold join}
$J^p_f(X)$  of $X$ over $f$
is
\begin{equation}
\label{eqn:definitionofjoin1}
J^p_f(X) \defeq W_f^p(X) \times \Delta^p/\sim,
\end{equation}
where
\[
(x_0,\ldots,x_p,t_0,\ldots,t_p) \sim (x_0',\ldots,x_p',t_0,\ldots,t_p)
\]
if for each $i$ with $t_i \neq 0$, $x_i = x_i'$.
\end{definition}

In the special situation
when $f$ is a semi-algebraic continuous map,
the space $J^p_f(X)$ defined above is (as before) not immediately
a semi-algebraic set, because of taking quotients.
Our next goal is to obtain a semi-algebraic set,
$\mathcal{J}^p_{f}(X)$ which is homotopy equivalent
to $J^p_f(X)$ similar to the case of the ordinary join.

\newcommand{\thetafour}{\bigwedge_{0\leq i<j \leq p } (T_i= 0 \vee T_j = 0 \vee
|f(\X^i)-f(\X^j)|^2=A_{ij})}

\begin{definition}[The semi-algebraic fibered join \cite{BZ09}%
    \label{defnJP}]
For a semi-algebraic subset $ X\subset \mathbb{R}^k$
contained in $B_k(0,R)$,
defined by a $\mathcal{P}$-formula
$\Phi$ and $f:X\to Y$ a semi-algebraic map, we define
$$
  \begin{aligned}\mathcal{J}^p_{f}(X)=&\{
(\textbf{x}^0,\dots,\textbf{x}^p,
  \textbf{t},\textbf{a})\in  \mathbb{R}^{(p+1)(k+1)+\binom{p+1}{2}}|\\ &\quad
  \Omega^R(\mathcal{\textbf{x}}^0,\dots,\mathcal{\textbf{x}}^p,\textbf{t}) \wedge
  \Theta_1(\textbf{t},\textbf{a}) \wedge
  \Theta_2^\Phi (\mathcal{\textbf{x}}^0,\dots,\mathcal{\textbf{x}}^p,\textbf{t})\wedge
  \Theta_3^f
  (\mathcal{\textbf{x}}^0,\dots,\mathcal{\textbf{x}}^p,\textbf{t},\textbf{a})
\},
\end{aligned}$$ where

$\Omega^R, \Theta_1,
\Theta_2^\Phi$ have been defined previously, and
\begin{equation}{\label{eqn:fibjoin}}\begin{aligned}
\Theta_3^f \ := \  & \thetafour.
\end{aligned}\end{equation}
We denote the formula $\Omega^R\wedge \Theta_1\wedge \Theta_2^\Phi \wedge
\Theta_3^f$ by $\mathcal{J}^p_f(\Phi)$.
\end{definition}

Observe that there exists a natural map,
$J^p(f): \mathcal{J}^p_f(X) \rightarrow Y$, which maps a point
$(\textbf{x}^0,\dots,\textbf{x}^p, \textbf{t},\textbf{0}) \in
\mathcal{J}^p_f(X)$ to $f(\textbf{x}^i)$ (where $i$ is such that $t_i \neq 0$).
It is easy to see that for each $\textbf{y} \in Y$,
$J^p(f)^{-1}(\textbf{y}) = \mathcal{J}^p(f^{-1}(\textbf{y}))$.

The following proposition
follows from the above observation and the generalized
Vietoris-Begle theorem
(see \cite[Theorem 2]{BWW06})
and is important in the proof of Proposition \ref{prop:main};
it relates up to $p$-equivalence
the semi-algebraic set $\mathcal{J}^p_f(X)$ to the
image of a closed, continuous semi-algebraic surjection $f:X\to Y$.
\hide{
It is similar to
Theorem  2.12 proved in \cite{BZ09}.
}
Its proof
is similar to the proof of
Theorem  2.12 proved in \cite{BZ09} and is omitted.

\begin{proposition}
{\label{prop:5}}\cite{BZ09}
Let $f:X \to Y$ a closed, continuous
semi-algebraic
surjection
with
$X \subset B_k(0,R)$
a closed
semi-algebraic set.  Then,
for  every $p\geq 0$,
the map $J^p(f):  \mathcal{J}^p_f(X) \to Y$ is a $p$-equivalence.
\end{proposition}

We now define a thickened version of the semi-algebraic set
$\mathcal{J}^p_f(X)$ defined above and prove that it is homotopy equivalent to
$\mathcal{J}^p_f(X)$.
The variables $A_{ij}, 0 \leq i < j \leq p$, play an important role in
the thickening process.

\begin{definition}[The thickened semi-algebraic fibered join]
\label{defncalJ}
For $X\subset \mathbb{R}^k$ a semi-algebraic
  set contained in $B_k(0,R)$
  defined by a $\mathcal{P}$-formula
$\Phi$,
  $p\geq 1$, and $\ep>0$
  define
  $$
  \begin{aligned}\mathcal{J}^p_{f,\ep}(X)=&\{(\mathcal{\textbf{x}}^0,\dots,\mathcal{\textbf{x}}^p,
  \textbf{t},\textbf{a})\in  \mathbb{R}^{(p+1)(k+1)+\binom{p+1}{2}}|\\ &\quad
\Omega^R(\mathcal{\textbf{x}}^0,\dots,\mathcal{\textbf{x}}^p,\textbf{t}) \wedge
  \Theta_1^\ep(\textbf{t},\textbf{a}) \wedge
  \Theta_2^\Phi (\mathcal{\textbf{x}}^0,\dots,\mathcal{\textbf{x}}^p,\textbf{t})\wedge
  \Theta_3(\mathcal{\textbf{x}}^0,\dots,\mathcal{\textbf{x}}^p,\textbf{t},\textbf{a})
  \},
  \end{aligned}$$ where
\begin{equation}{\label{eqn:fibjoin2}}\begin{aligned}
\Omega^R\ :=\ &\quad \bigwedge_{i=0}^p (|\X^i|^2\leq R^2) \wedge |\T|^2\leq
1, \\
\Theta_1^\ep\ :=\ &\quad \sum_{i=0}^p T_i=1 \wedge \sum_{1\leq i<j\leq p} A_{i j}^2\leq \ep, \\
\Theta_2^\Phi\ :=\ & \quad \bigwedge_{i=0}^p (T_i=0 \vee  \Phi(\X^i)), \\
\Theta_3^f \ := \  & \thetafour.
\end{aligned}\end{equation}
\end{definition}

Note that if $X$ is closed
(and bounded), then
$\mathcal{J}^p_{f,\ep}(X)$ is again closed (and bounded).

The relation between $\mathcal{J}^p_{f}(X)$ and $\mathcal{J}^{p}_{f,\ep}(X)$ is
described in the following proposition.

\begin{proposition}
\label{prop:7}
For $p\in \mathbb{N}$,  $f:X\to Y$
  semi-algebraic  there exists $\ep_0>0$ such that $\mathcal{J}^p_{f}(X)$ is
homotopy equivalent to $\mathcal{J}^{p}_{f,\ep}(X)$
  for all $0<\ep\leq \ep_0$.
\end{proposition}

Proposition \ref{prop:7} follows from the following two lemmas.

\begin{lemma}{\label{lem:prop7b}} For $p\in \mathbb{N}$, $f:X\to Y$
  semi-algebraic we have
  $$\mathcal{J}^p_f(X)=\bigcap_{t>0} \mathcal{J}^p_{f,t}(X).$$
\end{lemma}

\begin{proof} Obvious from Definitions \ref{defnJP} and \ref{defncalJ}. \end{proof}


\begin{lemma}
\label{lem:prop7a}  Let
$\TT \subset \mathbb{R}^k\times \mathbb{R}_+$
  such that each $\TT_t$ is closed and $\TT_t\subseteq B_k(0,R)$
  for $t>0$.   Suppose further that for all $0<t\leq t'$ we have $\TT_t
  \subseteq \TT_{t'} $.
Then, $$\bigcap_{t>0}\TT_t =
  \pi_{[1,k]}\left(\overline{\TT}\cap \pi_{k+1}^{-1}(0)\right).$$
  Furthermore,
  there exists  $\ep_0>0$ such that for all $\ep$ satisfying
  $0<\ep\leq \ep_0$  we have that
$\TT_\ep$
  is
semi-algebraically homotopy
  equivalent to
$\TT_\limit$
(cf. Notation \ref{not:limit}).
\end{lemma}

\hide{
\begin{lemma}
\label{lem:prop7a}  Let
$X \subset \mathbb{R}^k\times \mathbb{R}_+$
  such that each $X_t$ is closed and $X_t\subseteq B_k(0,R)$
  for $t>0$.   Suppose further that for all $0<t\leq t'$ we have $X_t
  \subseteq X_{t'} $.
Then, $$\bigcap_{t>0}X_t =
  \pi_{[1,k]}\left(\overline{X}\cap \pi_{k+1}^{-1}(0)\right).$$
  Furthermore,
  there exists  $\ep_0>0$ such that for all $\ep$ satisfying
  $0<\ep\leq \ep_0$  we have that $X_{\limit}$ is
semi-algebraically homotopy
  equivalent to $X_\ep$
(cf. Notation \ref{not:limit}).
\end{lemma}
}

\hide{, but we could not
locate a precise statement to this effect in the
literature. We include a proof in the appendix.
}

\begin{proof} 
The first part of the proposition is straightforward.  The second part follows easily from Lemma 16.16 in \cite{BPRbook2}.

\end{proof}

\begin{proof}[Proof of Proposition \ref{prop:7}]
The set $\TT=\{(\mathcal{\textbf{x}},t)\in
\re^{k+1}| \ t>0 \wedge \mathcal{\textbf{x}}\in \mathcal{J}^p_{f,t}(X)\}$ satisfies the conditions
of Lemma  \ref{lem:prop7a}.  The proposition now follows from Lemma
\ref{lem:prop7a}  and Lemma \ref{lem:prop7b}.
\end{proof}

\begin{proposition}
\label{prop:8}
For $p\in\mathbb{N}$, $f:X\to Y$ semi-algebraic, and
$0<t\leq t'$,
  $$\mathcal{J}^p_{f, t}(X)\subseteq \mathcal{J}^p_{f,t'}(X).$$
  Moreover,  there exists $\ep_0>0$ such that for
$0<\ep\leq \ep'<\ep_0$
the above inclusion induces a semi-algebraic homotopy equivalence.
\end{proposition}

The first part of Proposition \ref{prop:8} is obvious
from the definition of $\mathcal{J}^p_{f, \ep}(X)$.
The second part follows from Lemma \ref{lem:2} below.

The following lemma is probably well known and easy.
However, since we were unable to locate
an exact statement to this effect in the literature,
we include a proof.

\begin{lemma}
\label{lem:2}
Let
$\TT\subset \mathbb{R}^k \times \mathbb{R}_+$
be a
semi-algebraic set, and
suppose that
$\TT_t \subset \TT_{t'}$
for all $0<t<t'$. Then, there
exists $\ep_0$ such that for each $0<\ep<\ep'\leq \ep_0$ the inclusion
map
$\TT_\ep \overset{i_{\ep'}}{\hookrightarrow} \TT_{\ep'}$
induces a
semi-algebraic homotopy equivalence.
\end{lemma}

\begin{proof}
We prove that
there exists $\phi_{\ep'}:\TT_{\ep'}\to
\TT_{\ep}$ such that $$\begin{aligned}\phi_{\ep'}\circ i_{\ep'}&:\TT_\ep
\to \TT_{\ep}\halfspace, \quad \phi_{\ep'}\circ i_{\ep'} \simeq \Id_{\TT_{\ep}}, \\
i_{\ep'} \circ \phi_{\ep'}&: \TT_{\ep'} \to \TT_{\ep'}, \quad i_{\ep'}\circ
\phi_{\ep'}\simeq \Id_{\TT_{\ep'}}.
\end{aligned}
$$

We first define $i_t:\TT_\ep \hookrightarrow \TT_t$ and $\widehat{i}_t:\TT_t
\hookrightarrow \TT_{\ep'}$, and note that trivially $i_\ep=
\Id_{\TT_\ep}$, $\widehat{i}_{\ep'}= \Id_{\TT_{\ep'}}$, and
$i_{\ep'}=\widehat{i}_\ep$.  Now, by Hardt triviality there exists
$\ep_0>0$, such that there is a definably trivial homeomorphism $h$ which
commutes with the projection
$\pi_{k+1}$,
i.e., the following diagram commutes.
$$
\xymatrix{\TT_{\ep_0} \times (0,\ep_0] \ar[r]^{h
\phantom{stuffhere}} \ar[d]^{\pi_{k+1}} & \TT \cap \{(\mathcal{\textbf{x}},t)| \ 0<t\leq
\ep_0\} \ar[dl]_{\pi_{k+1}} \\ (0,\ep_0] }
$$

Define
$F(\mathcal{\textbf{x}},t,s)=h(\pi_{[1,k]}\circ h^{-1}(\mathcal{\textbf{x}},t),s)$.  Note that $F(\mathcal{\textbf{x}},t,t)=h
(\pi_{[1,k]} \circ h^{-1}(\mathcal{\textbf{x}},t),t)=h(h^{-1}(\mathcal{\textbf{x}},t))=(\mathcal{\textbf{x}},t)$.
We define
$$\begin{aligned}\phi_t:\TT_t&\to \TT_\ep, \\ \phi_t(\mathcal{\textbf{x}})&=\pi_{[1,k]}\circ
F(\mathcal{\textbf{x}},t,\ep), \\ \widehat{\phi}_t: \TT_{\ep'}& \to \TT_t, \\
\widehat{\phi}_t(\mathcal{\textbf{x}})&=\pi_{[1,k]}\circ F(\mathcal{\textbf{x}},\ep',t)\end{aligned}$$ and
note that
$\phi_{\ep'}=\widehat{\phi}_\ep$.

Finally,
define $$\begin{array}{ll}
H_1(\cdot,t)&= \phi_t\circ i_t :\TT_\ep \to \TT_\ep, \\
H_1(\cdot,\ep)&=\phi_\ep\circ i_\ep = \Id_{\TT_\ep}, \\
H_1(\cdot,\ep')&=\phi_{\ep'}\circ i_{\ep'}, \\
\ & \ \\
H_2(\cdot,t)&= \widehat{i}_t \circ \widehat{\phi}_t: \TT_{\ep'} \to  \TT_{
  \ep'}, \\
H_2(\cdot,\ep)&=\widehat{i}_\ep \circ \widehat{\phi}_\ep = i_{\ep'}
\circ \phi_{ \ep'}, \\
H_2(\cdot,\ep')&=\widehat{i}_{\ep'} \circ \widehat{\phi}_{\ep'} =
\Id_{\TT_{ \ep'}}. \end{array}$$

The semi-algebraic continuous maps $H_1$ and $H_2$ defined above
give a semi-algebraic homotopy between the maps
$\phi_{\ep'} \circ i_{\ep'} \simeq  \Id_{\TT_{\ep}}$ and
$i_{\ep'} \circ \phi_{ \ep'} \simeq \Id_{\TT_{\ep'}}$
proving the required semi-algebraic homotopy equivalence.
\end{proof}

\newcommand{\Upsilonthree}
{\bigwedge_{0\leq i<j \leq p }
     (T_i= 0 \vee T_j = 0
\vee |\X^i-\X^j|^2= A_{i j})}

As mentioned before, we would like to
replace $\mathcal{J}^p_{f,\eps}(X)$
by another semi-algebraic set,
which we denote by
$\mathcal{D}^p_\eps(X)$,
which is  homotopy equivalent to
$\mathcal{J}^p_{f,\eps}(X)$,
under certain assumptions on $f$ and $\eps$,
whose definition no longer involves the
map $f$. This is what we do next.

\begin{definition}
[The thickened diagonal]
\label{defncalD}
For a semi-algebraic set $X\subset \mathbb{R}^k$ contained in $B_k(0,R)$
defined by a $\mathcal{P}$-formula
$\Phi$,
$p\geq 1$, and $\ep>0$,
define
$$
  \begin{aligned}\mathcal{D}^p_{\ep}(X)=&\{(\mathcal{\textbf{x}}^0,\dots,\mathcal{\textbf{x}}^p,
  \textbf{t},\textbf{a})\in  \mathbb{R}^{(p+1)(k+1)+\binom{p+1}{2}}|\\ &\quad
\Omega^R(\mathcal{\textbf{x}}^0,\dots,\mathcal{\textbf{x}}^p,\textbf{t}) \wedge
  \Theta_1(\textbf{t},\textbf{a}) \wedge
  \Theta_2^\Phi (\mathcal{\textbf{x}}^0,\dots,\mathcal{\textbf{x}}^p,\textbf{t})\wedge
  \Upsilon(\mathcal{\textbf{x}}^0,\dots,\mathcal{\textbf{x}}^p,\textbf{t},\textbf{a})
  \},
    \end{aligned}$$
    where $\Omega^R,\Theta_1^\ep,\Theta_2^\Phi$ are defined as in Equation \ref{eqn:fibjoin2}, and
\begin{equation*}{
\label{eqn:calD}
}\begin{aligned}
\Upsilon \ := \  & \Upsilonthree.
\end{aligned}\end{equation*}
\end{definition}

Notice that the formula defining the thickened diagonal,
$\mathcal{D}^p_{\ep}(X)$
in Definition
\ref{defncalD},
is identical
to that defining the thickened semi-algebraic fibered join,
$\mathcal{J}^p_{f,\ep}(X)$
in Definition \ref{defncalJ},
except that $\Theta_3^f$ is replaced by $\Upsilon$, and
$\Upsilon$ does not depend on the map $f$ or on the set $X$.

\begin{proposition}
{\label{prop:calDp} \label{prop:9}}
Let $X \subset \mathbb{R}^k$ be a semi-algebraic set defined by a
quantifier free
formula
$\Phi$ having
\hide{
dense format bounded by  $(s,d,k)$,
and division-free additive format bounded by $(a,k)$.
}
(division-free) additive format bounded by $(a,k)$
and dense format bounded by $(s,d,k)$.
 Then,
$\mathcal{D}^p_\eps(X)$
is a semi-algebraic subset set of
$\mathbb{R}^N$, defined by a formula
with
(division-free)
 additive format bounded by $(M,N)$ and dense format
bounded by   $(M',d+1,N)$,  where
$M=\constM$, $M'=\constMprime$, and
  $N=(p+1)(k+1)+\binom{p+1}{2}$.
\end{proposition}

\begin{proof}
It is a straightforward computation to bound the division-free additive
format and give the dense format of the formulas
$\Omega^R,\Theta_1^\eps,
 \Upsilon$ as well as the (division-free) additive format and dense format of the formula
 $\Theta_2^\Phi$.
More precisely, let
\hide{
$$\begin{array}{ll}
M_{\Omega^R} = (p+1)k+(p+1), & M'_{\Omega^R} = (p+1)+1, \\
M_{\Theta_1^\ep}=(p+1)+\binom{p+1}{2}, & M'_{\Theta_1^\ep}=2, \\
M_{\Theta_2^\Phi}=(p+1)(a+1), & M'_{\Theta_2^\Phi}=(p+1)(s+1), \\
M_{\Upsilon}=\binom{p+1}{2}(2k+2), &M'_{\Upsilon}=3\binom{p+1}{2}.
\end{array}$$
}
$$\begin{array}{ll}
M_{\Omega^R} = (p+1)k+(p+1), & M'_{\Omega^R} = (p+1)+1, \\
M_{\Theta_1^\ep}=(p+1)+\binom{p+1}{2}, & M'_{\Theta_1^\ep}=2, \\
M_{\Theta_2^\Phi}=(p+1)a, & M'_{\Theta_2^\Phi}=(p+1)(s+1), \\
M_{\Upsilon}=2k\binom{p+1}{2}, &M'_{\Upsilon}=3\binom{p+1}{2}.
\end{array}$$
It is clear from
Definition \ref{defncalD}
that the
division-free additive format (resp. dense format) of $\Omega^R$ is
bounded by $(M_{\Omega^R},N)$, $N=(p+1)(k+1)+\binom{p+1}{2}$
(resp. $(M'_{\Omega^R},2,N)$).  Similarly, the division-free
additive format (resp.
 dense format) of $\Theta_1^\ep,
 \Upsilon$
is bounded by $(M_{\Theta_1^\ep},N),
(M_{\Upsilon},N)$
 (resp. $(M'_{\Theta_1^\ep},2,N)$,
 $(M'_{\Upsilon},2,N)$).
 Finally, the (division-free) additive format of
 $\Theta_2^\Phi$
 is bounded by $(M_{\Theta_2^\Phi},N)$
 and dense format is $(M'_{\Theta_2^\Phi},d+1,N)$.
The (division-free) additive
 format (resp. dense format) of the formula defining
$\mathcal{D}^p_\ep(X)$
 is thus bounded by
$$\left(M_{\Omega^R}+M_{\Theta_1^\ep}+M_{\Theta_2^\Phi}
+M_{\Upsilon},N\right) \qquad (\text{resp.} (M'_{\Omega^R}+M'_{\Theta_1^\ep}+M'_{\Theta_2^\Phi}
+M'_{\Upsilon},d+1,N)).$$
\end{proof}

We now relate the thickened semi-algebraic fibered-join and
the thickened diagonal
using a sandwiching argument similar in spirit to that used in
\cite{Hausdorff}.

\subsubsection{Limits of one-parameter families}
In this section, we fix a bounded semi-algebraic set
$\TT\subset \mathbb{R}^k\times \mathbb{R}_+$
such that
$\TT_t$ is closed and  $\TT_t
\subseteq B_{k}(0,R)$ for some
$R\in \mathbb{R}_+$ and all $t>0$.
Let $\TT_\limit$ be as in Notation \ref{not:limit}.

We need the following proposition proved in \cite{Hausdorff}.

\begin{proposition}[\cite{Hausdorff} Proposition 8]
\label{prop:12}
There
exists $\lambda_0>0$ such that for every $\lambda \in (0,\lambda_0]$
there exists a continuous semi-algebraic surjection $f_\lambda: \TT_\lambda
\to
\TT_\limit
$ such that the family of maps
 $\{f_\lambda\}_{0<\lambda\leq \lambda_0}$
satisfies
\begin{enumerate}
\item
$$
\lim_{\lambda\to 0} \max_{\emph{\textbf{x}}\in
  \TT_\lambda}  |\emph{\textbf{x}}-f_\lambda(\emph{\textbf{x}})|=0,
$$
and
\item
for each $\lambda,\lambda'\in (0,\lambda_0)$, $f_\lambda=f_{\lambda'}\circ g$
for some semi-algebraic homeomorphism
$g :\TT_\lambda \to \TT_{\lambda'}$.
\end{enumerate}
\end{proposition}

\hide{
Proposition \ref{prop:12} is proved in \cite{Hausdorff}. However, we
need another result which could be deduced from the proof of Proposition
\ref{prop:12} in \cite{Hausdorff} but not explicitly stated there. In the
appendix, we include a different proof of Proposition \ref{prop:12}, along
with the proof of the intermediate result that we need.
}

\begin{proposition}
\label{prop:13}
There exist
  $\lambda_1$  satisfying $0<\lambda_1\leq \lambda_0$ and semi-algebraic
  functions  $\delta_0,\delta_1: (0,\lambda_1) \rightarrow \mathbb{R}$,
  such that
\begin{enumerate}
\item $0 < \delta_0(\lambda) < \delta_1(\lambda)$, for $\lambda \in (0,\lambda_1)$,
\item  $\lim_{\lambda \to 0}
 \delta_0 (\lambda)=0$,
  $\lim_{\lambda \to 0} \delta_1(\lambda)\neq
  0$,
\item
for  each $\lambda\in (0,\lambda_1)$,
and $\delta, \delta'$ satisfying
$0<\delta_0(\lambda)<\delta<\delta'<  \delta_1(\lambda)$,
the inclusion  $\mathcal{D}^p_{\delta'}(\TT_\lambda)
  \hookrightarrow \mathcal{D}^p_{\delta}(\TT_\lambda)$
induces a semi-algebraic homotopy equivalence.
\end{enumerate}
\end{proposition}

Proposition \ref{prop:13} is adapted from Proposition 20 in
\cite{Hausdorff}
and the proof is identical after replacing
$D^p_\lambda(\delta)$ (defined in \cite{Hausdorff}) with
the semi-algebraic set $\mathcal{D}^p_{\delta}(\TT_\lambda)$ defined above
(Definition \ref{defncalD}).


Let $f_\lambda$, $\lambda\in (0,\lambda_0]$, satisfy the conclusion of Proposition \ref{prop:12}.
As in \cite{Hausdorff}, define for $p\in \mathbb{N}$
\begin{equation}
\eta_p(\lambda)=p(p+1)\left(4R \max_{\mathcal{\textbf{x}} \in T_\lambda} |
\mathcal{\textbf{x}}-f_\lambda(\mathcal{\textbf{x}})| + 2\left(\max_{\mathcal{\textbf{x}} \in T_\lambda } |
\mathcal{\textbf{x}}-f_\lambda(\mathcal{\textbf{x}})|\right)^2\right).
\end{equation}

\hide{
Note that, for $q\leq p$, we have $\eta_q(\lambda)\leq \eta_p(\lambda)$
for each $\lambda\in (0,\lambda_0]$,
and that $\lim_{\lambda \to 0} \eta_p(\lambda)=0$ for each $p\in \mathbb{N}$
by Proposition \ref{prop:12} A.
}
Note that, for every $\lambda\in (0,\lambda_0]$ and every $q\leq p$,
we have $\eta_q(\lambda) \leq \eta_p(\lambda)$.  Additionally, for each $p\in \mathbb{N}$,
$\lim_{\lambda \to 0 } \eta_p(\lambda)=0$ by Proposition \ref{prop:12} A.

Define for $\overline{\textbf{x}}=(\mathcal{\textbf{x}}^0,\dots,\mathcal{\textbf{x}}^p)\in \mathbb{R}^{(p+1)k}$ the sum
$\rho_p(\overline{\textbf{x}})$ as $$\rho_p(\mathcal{\textbf{x}}^0,\dots,\mathcal{\textbf{x}}^p)=\sum_{1\leq i < j
\leq p} |\mathcal{\textbf{x}}^i - \mathcal{\textbf{x}}^j|^2.$$
A special case of this sum corresponding to all $t_i\neq 0$
appears in the formula $\Upsilon^\ep_1$ of Definition \ref{defncalD}
after making the replacement $a_{ij}=|\mathcal{\textbf{x}}^i-\mathcal{\textbf{x}}^j|$.
The next lemma is taken from \cite{Hausdorff} to which we refer
the reader for the proof.

\begin{lemma}[\cite{Hausdorff} Lemma 21]\label{lem:7}  Given $\eta_p(\lambda)$
  and  $f_\lambda:\TT_\lambda \to \TT
  _\limit$ as above, we have
$$|\halfhalfspace \sum_{i<j} |f_\lambda(\emph{\textbf{x}}^i)-f_\lambda(\emph{\textbf{x}}^j)|^2-
  \sum_{i<j} |\emph{\textbf{x}}^i-\emph{\textbf{x}}^j|^2\halfhalfspace| \halfspace \leq \halfspace
  \eta_p(\lambda),
$$
and in particular
$$\rho_p(\emph{\textbf{x}}^0,\dots,\emph{\textbf{x}}^p)\halfhalfspace \leq \halfhalfspace
\rho_p(f_\lambda(\emph{\textbf{x}}^0),\dots,f_\lambda(\emph{\textbf{x}}^p))+\eta_p(\lambda)
\halfhalfspace \leq \halfhalfspace \rho_p(\emph{\textbf{x}}^0,\dots,\emph{\textbf{x}}^p) + 2\eta_p(\lambda).
$$
\end{lemma}

The next proposition follows immediately from Lemma \ref{lem:7},
Definition \ref{defncalJ}, and Definition \ref{defncalD}.

\begin{proposition}
\label{propINC}
For every $\lambda\in (0,\lambda_0)$ and $\ep>0$, we have
  $$ \mathcal{J}^p_{f_\lambda,\ep}(\TT_\lambda) \subseteq
  \mathcal{D}^p_{\ep+\eta_p(\lambda)} (\TT_\lambda) \subseteq
  \mathcal{J}^p_{f_\lambda,\ep+2 \eta_p (\lambda)}(\TT_\lambda).$$
\end{proposition}

Let $\ep_1,\ep_2\in \mathbb{R}_+$ satisfy the conclusions of Proposition \ref{prop:7},
 Proposition \ref{prop:8}, respectively.  Set $\ep_0=\min\{\ep_1,\ep_2\}$.

\begin{proposition}
\label{prop:15}
For any $p\in
  \mathbb{N}$, there exist
$\lambda,\ep,\delta\in \mathbb{R}_+$ such that
$\ep\in (0,\ep_0)$, $\lambda\in (0,\lambda_0)$, and
\hide{
such that $\mathcal{D}^p_\delta(\TT_\lambda)
  \overset{i}{\hookrightarrow} \mathcal{J}^p_{f_\lambda,\ep}(\TT_\lambda)$ and
such that
these inclusions induce
 a homotopy  equivalence
}
 $$\mathcal{D}^p_\delta(\TT_\lambda) \simeq
  \mathcal{J}^p_{f_\lambda,\ep}(\TT_\lambda).$$
\end{proposition}

\begin{proof} We first describe how to choose
$\ep,\ep'\in (0,\ep_0)$, $\lambda\in (0,\lambda_0)$ and $\delta,\delta' \in
(\delta_0(\lambda),\delta_1(\lambda))$
(cf. Proposition \ref{prop:13}) so that $$
\mathcal{D}^p_{\delta'}(\TT_\lambda) \subseteq \mathcal{J}^p_{f_\lambda,\ep}(\TT_\lambda)
\overset{\ast}{\subseteq} \mathcal{D}^p_{\delta}(\TT_\lambda)\subseteq
\mathcal{J}^p_{f_\lambda,\ep'}(\TT_\lambda),$$ and secondly we show
that,
with these choices,
the inclusion $(\ast)$ induces a homotopy equivalence.

Since the limit of $\delta_1(\lambda)-\delta_0(\lambda)$ is not zero
for $0<\lambda<
\lambda_1\leq \lambda_0
$ and $\lambda$ tending to zero, while the
limits of $\eta_p(\lambda)$ and $\delta_0(\lambda)$ are zero
(by  Proposition \ref{prop:13}, Proposition \ref{prop:12} A), we can
choose $0<\lambda<\lambda_0$ which simultaneously satisfies
$$2\eta_p(\lambda)<\fraction{\delta_1(\lambda)-\delta_0(\lambda)}{2} \qquad
\text{and} \qquad \delta_0(\lambda)+4\eta_p(\lambda)<\ep_0.$$
\hide{
 Then, since
$\delta_0(\lambda)>0$ we have the following two inequalities
$$2\eta_p(\lambda)<\fraction{\delta_1(\lambda) + \delta_0(\lambda)}{2}
\qquad \text{and} \qquad
2\eta_p(\lambda)<\delta_1(\lambda)-\delta_0(\lambda).$$
}

Set
$\delta'=\delta_0+\eta_p(\lambda)$,
\hide{
$\ep=\delta+2\eta_p(\lambda)$,
$\delta'=\delta+3\eta_p(\lambda)$, and $\ep'=\delta+4\eta_p(\lambda)$.
}
$\ep=\delta_0+2\eta_p(\lambda)$,
$\delta=\delta_0+3\eta_p(\lambda)$, and $\ep'=\delta_0+4\eta_p(\lambda)$.
From Proposition \ref{propINC} we have the following inclusions,
$$
\mathcal{D}^p_{\delta'}(\TT_\lambda) \overset{i}{\hookrightarrow}
\mathcal{J}^p_{f_\lambda,\ep}(\TT_\lambda) \overset{j}{\hookrightarrow}
\mathcal{D}^p_{\delta}(\TT_\lambda)\overset{k}{\hookrightarrow}
\mathcal{J}^p_{f_\lambda,\ep'}(\TT_\lambda).$$

Furthermore,
it is easy to see that $\delta,
\delta'\in (\delta_0(\lambda),\delta_1(\lambda))$ and that
$\ep,\ep'\in (0,\ep_0)$, and so we have that both $j\circ i$ and
$k\circ j$ induce semi-algebraic homotopy equivalences (Proposition \ref{prop:13},
Proposition \ref{prop:8} resp.).

For each $\textbf{z} \in \mathcal{D}^p_{\delta'}(\TT_\lambda)$ we have the following
diagram between the homotopy groups.

$$
\xymatrix{ \boldsymbol{\pi}_\ast(\mathcal{D}^p_{\delta'}(\TT_\lambda),\textbf{z})
\ar[rr]^{(j\circ i )_\ast}_{\cong} \ar[rd]^{i_\ast} & &
 \boldsymbol{\pi}_\ast(\mathcal{D}^p_{\delta}(\TT_\lambda), \textbf{z}) \ar[rd]^{k_\ast} & \\ &
 \boldsymbol{\pi}_\ast(\mathcal{J}^p_{f_\lambda,\ep}(\TT_\lambda),\textbf{z}) \ar[rr]^{(k\circ j)_\ast}_{\cong}
 \ar[ru]^{j_\ast} & & \boldsymbol{\pi}_\ast(\mathcal{J}^p_{f_\lambda,\ep'}(\TT_\lambda),
\textbf{z})}
$$
where we have identified $\textbf{z}$ with its images under the various
inclusion maps.

Since $(j\circ i)_\ast=j_\ast \circ i_\ast$, the surjectivity of
$(j\circ i)_\ast$ implies that $j_\ast$ is surjective, and similarly
$(k\circ j)_\ast$ is injective
ensures that $j_\ast$ is injective.
Hence, $j_\ast$ is an isomorphism as required.

This implies that
the inclusion map
\hide{
between $\mathcal{D}^p_\delta (
\TT_\lambda)$ and $\mathcal{J}^p_{f_\lambda,\ep}(\TT_\lambda)$
}
$\mathcal{J}^p_{f_\lambda,\ep}(\TT_\lambda)\overset{j}{\hookrightarrow} \mathcal{D}^p_\delta(\TT_\lambda)$
 is a
weak homotopy equivalence
(see \cite[page 181]{Whitehead}). Since
both spaces have the structure of a finite
CW-complex, every weak equivalence is in fact a homotopy equivalence
(\cite[Theorem 3.5, p. 220]{Whitehead}).
\end{proof}


We now prove Proposition \ref{prop:main}.
\begin{proof}[Proof of Proposition \ref{prop:main}]
Let
$\TT\subset \mathbb{R}^k \times \mathbb{R}_+$
such that $\TT_\lambda$ is closed and $\TT_\lambda\subset B_k(0,R)$ for some $R\in \mathbb{R}$
and all
$\lambda\in \mathbb{R}_+$.
Applying Proposition \ref{prop:15},
we have that there exist $\lambda\in
(0,\lambda_0)$ and $\ep\in (0,\ep_0)$
such that
 the sets $\mathcal{D}^p_\delta(\TT_\lambda)\simeq
\mathcal{J}^p_{f_\lambda,\ep}(\TT_\lambda)$ are
semi-algebraically
homotopy equivalent.  Also, by
Proposition \ref{prop:7} the sets
$\mathcal{J}^p_{f_\lambda,\ep}(\TT_\lambda)\simeq \mathcal{J}^p_{f_\lambda}(\TT_\lambda)$ are
semi-algebraically homotopy equivalent.
By Proposition \ref{prop:5} and Proposition \ref{prop:12} the map
$J(f_\lambda):\mathcal{J}^p_{f_\lambda}(\TT_\lambda)\twoheadrightarrow \TT_0$ induces a
$p$-equivalence.

Thus we have the following sequence of homotopy equivalences and
$p$-equivalence.
\begin{equation}{\label{eqn:mainresult}}
\mathcal{D}^p_{\delta}(\TT_\lambda)\simeq
\mathcal{J}^p_{f_\lambda,\ep}(\TT_\lambda)
\simeq\mathcal{J}^p_{f_\lambda}(\TT_\lambda)
\halfspace {{\widetilde{\to\halfbackspace \succ}_p}}\halfspace \TT_{\limit}\end{equation}

The first homotopy equivalence follows from Proposition \ref{prop:15},
the second from Proposition \ref{prop:7}, and the last $p$-equivalence
is a consequence of Propositions \ref{prop:5} and \ref{prop:12}.
The bound on the format
of the
formula defining $\mathcal{D}^p:=\mathcal{D}^p_\delta(\TT_\lambda)$ follows from
Proposition \ref{prop:9}.  This finishes the proof.
\end{proof}

\begin{proof}[Proof of Theorem \ref{thm:main_weak}]
The theorem follows directly from Proposition \ref{prop:main},
Theorem \ref{thm:additive} and
Proposition \ref{prop:top_basic} after choosing $p=k+1$.
\end{proof}


\section{Proofs of Theorem \ref{thm:main1} and Theorem \ref{thm:main}
\label{sec:main1}}

\subsection{Algebraic preliminaries}
We start
with a lemma
that provides a slightly different characterization of
additive complexity from that given in Definition
\ref{def:rational_additive}.
Roughly speaking the lemma states that any given
 additive representation of a given polynomial $P$ can be
modified without changing its length to another  additive
representation of $P$ in which any negative exponents occur only
in the very last step. This simplification will be very useful in
what follows.

\begin{lemma}\cite[page 152]{Dries}
\label{lem:equivalence}
For any $P\in \mathbb{R}[X_1,\ldots,X_k]$ and $a\in \mathbb{N}$ we have
  $P$ has  additive complexity at most $a$ if and only if
there exists a sequence of equations (*)
\begin{itemize}
\item[(i)] $Q_1=u_1X_{1}^{\alpha_{11}} \cdots X_{k}^{\alpha_{1k}} +
  v_1X_{1}^{\beta_{11}} \cdots X_{k}^{\beta_{1k}}$,\\ where $u_1, v_1
  \in \mathbb{R}$, and $\alpha_{11}, \ldots ,\alpha_{1k}, \beta_{11}, \ldots ,
  \beta_{1k} \in \N$;

\item[(ii)] $Q_j=u_jX_{1}^{\alpha_{j1}} \cdots X_{k}^{\alpha_{j k}}
  \prod_{1 \le i \le j-1}Q_{i}^{\gamma_{j i}} + v_jX_{1}^{\beta_{j1}}
  \cdots X_{k}^{\beta_{j k}}\prod_{1 \le i \le
    j-1}Q_{i}^{\delta_{ji}}$,\\ where $1 < j \le a$, $u_j, v_j \in
  \mathbb{R}$, and $\alpha_{j1}, \ldots ,\alpha_{j k}, \beta_{j1}, \ldots ,
  \beta_{j k}, \gamma_{ji}, \delta_{ji} \in \N$ for $1 \le i <j$;

\item[(iii)] $P= cX_{1}^{\zeta_{1}} \cdots X_{k}^{\zeta_{k}}\prod_{1
  \le j \le a}Q_{j}^{\eta_{j}}$,\\ where $c \in \mathbb{R}$, and $\zeta_1,
  \ldots , \zeta_k, \eta_1, \ldots ,\eta_a \in \mathbb{Z}$.
\end{itemize}
\end{lemma}


\begin{remark}
Observe that in Lemma \ref{lem:equivalence}
all exponents other than those in line (iii) are in $\mathbb{N}$ rather
than in $\mathbb{Z}$ (cf. Definition~\ref{def:rational_additive}).
Observe also that if
a polynomial $P$ satisfies the conditions of the lemma, then
it has  additive complexity at most $a$.
\end{remark}



\newcommand{\hatP}{\widehat{P}}
\newcommand{\hatQ}{\widehat{Q}}

\subsection{The algebraic case}
Before proving Theorem \ref{thm:main1} it is useful to first consider the
algebraic case separately, since the main technical ingredients
used in the proof of Theorem \ref{thm:main1} are more clearly
visible in this case. With this in mind, in this section we consider the
algebraic case and prove the following theorem, deferring the proof in the
general semi-algebraic case till the next section.

\begin{theorem}{
\label{thm:algebraic}}
 The number of
distinct
homotopy types of
  $\Zer(F,\mathbb{R}^k)$ amongst all polynomials $F\in \mathbb{R}[X_1,\dots,X_k]$ having
    additive complexity at most $a$ does not exceed
$$ 2^{O(k(k^2+a))^8} =
2^{(k+a)^{O(1)}}.$$
\end{theorem}

Before proving Theorem \ref{thm:algebraic} we need a few preliminary results.

\begin{proposition}{\label{prop:sectionfivemain}}
Let $F,P,Q\in \mathbb{R}[\X]$
such that $FQ=P$,
$R \in \mathbb{R}_+$, and define
\begin{equation}{\label{eqn:tee}}
\TT:=\{(\emph{\textbf{x}},t)\in
\mathbb{R}^k \times \mathbb{R}_+ | \; 
P^2(\emph{\textbf{x}}) \leq t(Q^2(\emph{\textbf{x}})-t^N) \wedge 
|\emph{\textbf{x}}|^2\leq R^2
\},
\end{equation}
where $N = 2 \deg(Q)+1$.
Then, using Notation \ref{not:limit}
\[
\TT_{\limit}=\Zer(F,\mathbb{R}^k) \cap \overline{B_k(0,R)}.
\]
\end{proposition}

Before proving Proposition \ref{prop:sectionfivemain}
we first discuss an illustrative example.

\begin{example}
\label{eg:main}
Let
\[
F_1=X(X^2+Y^2-1),
\]
\[F_2=X^2+Y^2-1.
\]
Also, let
\[
P_1=X^2(X^2+Y^2-1),
\]
\[
P_2= X(X^2+Y^2-1),
\]
and
\[
Q_1=Q_2=X.
\]

For $i=1,2$, and $R >0$, let
\[
\TT^i =\{(\mathcal{\textbf{x}},t)\in
\mathbb{R}^k\times \mathbb{R}_+
| \; 
P_i^2(\mathcal{\textbf{x}}) \leq t(Q_i^2(\mathcal{\textbf{x}})-t^N)
\wedge 
|\mathcal{\textbf{x}}|^2
\leq R^2
\}
\]
as in Proposition \ref{prop:sectionfivemain}.

In Figure \ref{fig:example}, we display
from left to right,
$\Zer(F_1,\mathbb{R}^2)$, $\TT^1_\ep$, $\Zer(F_2,\mathbb{R}^2)$ and
and $\TT^2_\ep$, respectively (where $\ep=.005$ and $N=3$).
Notice that, for $i=1,2$ and any fixed $R>0$, the semi-algebraic
set $T^i_\ep$ approaches (in the
sense of Hausdorff
distance)
the set $\ZZ(F_i,\mathbb{R}^2) \cap \overline{B_2(0,R)}$
as $\ep \rightarrow 0$.

\begin{figure}[h!]
\vspace{-150 pt}
\centering
  \subfloat{\label{fig:a} \includegraphics[bb=50 0 500 700,
      clip,width=.18\textwidth]{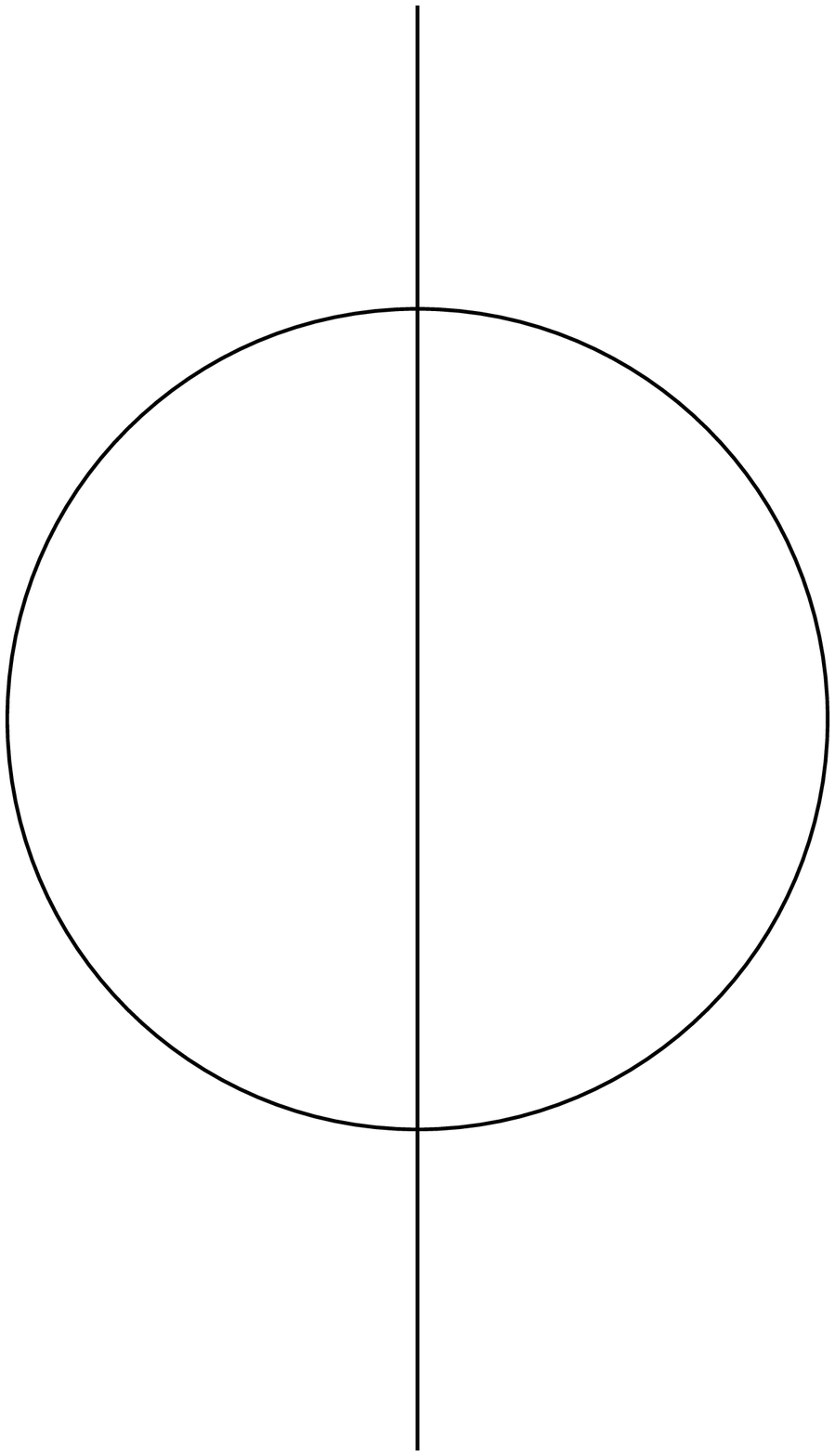}}
  \subfloat{\label{fig:b} \includegraphics[bb=20 0 230 500,
      clip,width=.27\textwidth]{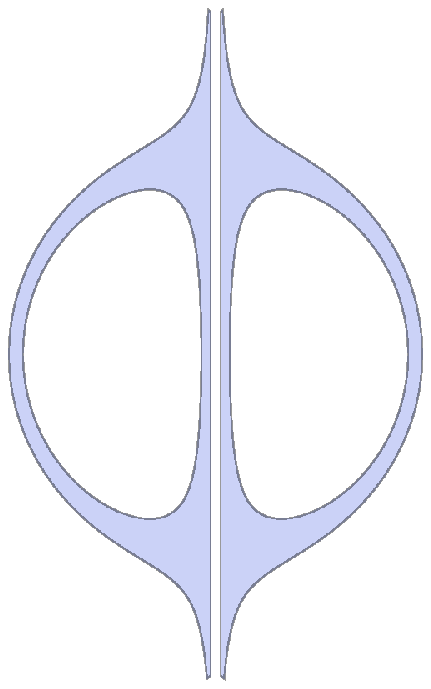}}
  \subfloat{\label{fig:c} \includegraphics[bb=80 40 500 700,clip,
      width=.18\textwidth]{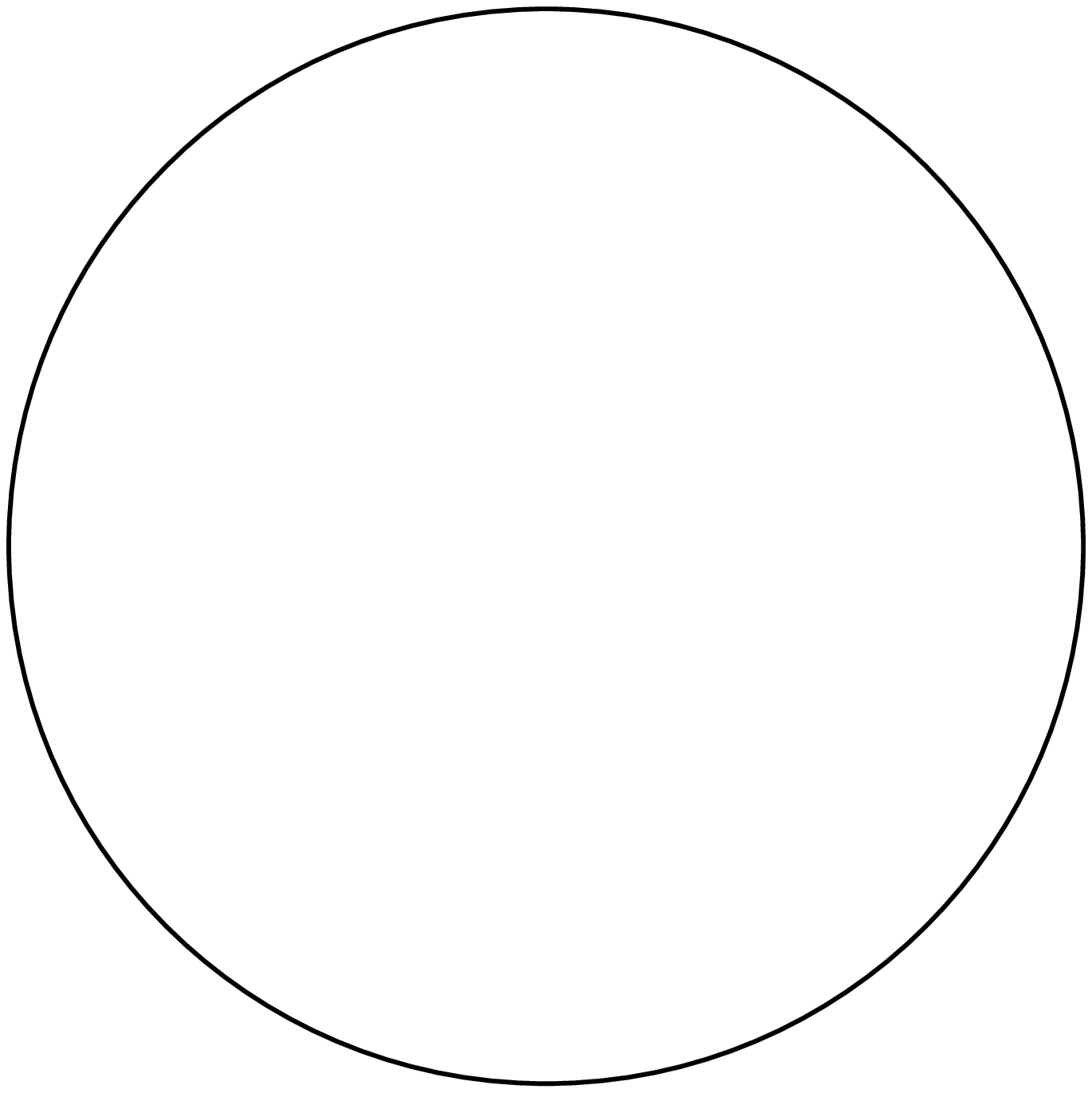}}
  \subfloat{\label{fig:d} \includegraphics[bb=40 0 200 500 ,clip,
      width=.22\textwidth]{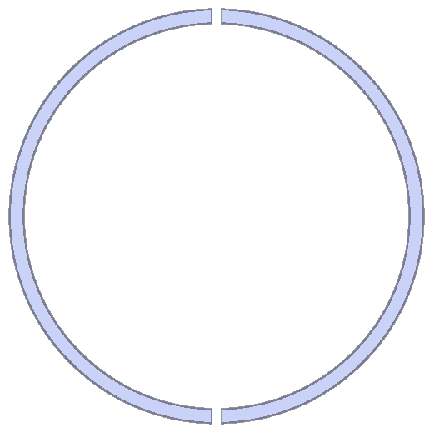}}
  \caption{Two examples.}
  \label{fig:example}
\end{figure}

\end{example}

We now prove Proposition \ref{prop:sectionfivemain}.

\begin{proof}[Proof of Proposition \ref{prop:sectionfivemain}]
We show both inclusions.  First let $\mathcal{\textbf{x}}\in \TT_{\limit}$, and we
show that $F(\mathcal{\textbf{x}})=0$.
In particular, we prove that $0\leq F^2(\mathcal{\textbf{x}})<\ep$ for every $\ep>0$.

Let $\ep>0$.
Since $F^2$ is
continuous,
there exists $\delta>0$ such that
\begin{equation}{\label{eqn:sectfiveCONTF}}|\mathcal{\textbf{x}}-\textbf{y}|^2<\delta \quad
\implies \quad |F^2(\mathcal{\textbf{x}})-F^2(\textbf{y})|<\fraction{\ep}{2}.\end{equation} After
possibly making $\delta$ smaller we can suppose that
$\delta<\fraction{\ep^2}{4}$.

From the definition of $\TT_{\limit}$
(cf. Notation \ref{not:limit}),
we have that
\begin{equation}{\label{eqn:Tzero}} \TT_{\limit} = \{ \mathcal{\textbf{x}} \mid \quad (\forall
\delta)(\delta>0\implies (\exists t)(\exists \textbf{y})(\textbf{y}\in \TT_t \wedge
|\mathcal{\textbf{x}}-\textbf{y}|^2+t^2<\delta))\}.\end{equation}

Since
$\mathcal{\textbf{x}}\in \TT_{\limit}$,
there exists
$t\in \mathbb{R}_+$
and $\textbf{y}\in \TT_t$ such that $|\mathcal{\textbf{x}}-\textbf{y}|^2+t^2<\delta$, and in
particular both $|\mathcal{\textbf{x}}-\textbf{y}|^2<\delta$ and
$t^2<\delta<\fraction{\ep^2}{4}$.  The former inequality implies that
$|F^2(\mathcal{\textbf{x}})-F^2(\textbf{y})|<\fraction{\ep}{2}$.  The latter inequality implies
$t<\fraction{\ep}{2}$, and this together with $\textbf{y}\in \TT_t$
implies
$$\begin{aligned} &P^2(\textbf{y})\leq t (Q^2(\textbf{y})-t^N)\\ \implies &
F^2(\textbf{y})Q^2(\textbf{y})\leq t (Q^2(\textbf{y})-t^N) \\ \implies & 0\leq
F^2(\textbf{y})\leq t-\frac{t^{N+1}}{Q^2(\textbf{y})}<t \\ \implies & 0\leq
F^2(\textbf{y})<\fraction{\ep}{2}.
\end{aligned}$$
So, $F^2(\textbf{y})<\fraction{\ep}{2}$.  Finally, note that $|F^2(\mathcal{\textbf{x}})|\leq
|F^2(\mathcal{\textbf{x}}) - F^2(\textbf{y})| + |F^2(\textbf{y})| < \fraction{\ep}{2} +
\fraction{\ep}{2} = \ep$.

We next prove the other inclusion, namely we show
$\Zer(F,\mathbb{R}^k)\cap \overline{B_k(0,R)}\subseteq \TT_{\limit}$.
Let $\mathcal{\textbf{x}}\in \Zer(F,\mathbb{R}^k)\cap \overline{B_k(0,R)}$.
We fix $\delta>0$ and  show that there exists
$t\in \mathbb{R}_+$
and $\textbf{y}\in \TT_t$
such that $|\mathcal{\textbf{x}}-\textbf{y}|^2+t^2<\delta$ (cf. Equation \ref{eqn:Tzero}).

There are two cases to consider.
\begin{itemize}
\item[$Q(\mathcal{\textbf{x}})\neq 0$:]
Since $Q(\mathcal{\textbf{x}})\neq 0$,
there exists $t>0$ such that $Q^2(\mathcal{\textbf{x}})\geq t^N$ and $t^2<\delta$.
Now, $\mathcal{\textbf{x}}\in \TT_t$ and $$|\mathcal{\textbf{x}}-\mathcal{\textbf{x}}|^2+t^2 \ = \ t^2 \ <\delta,$$ so
setting $\textbf{y}=\mathcal{\textbf{x}}$ we see that $\textbf{y}\in \TT_t$ and $|\mathcal{\textbf{x}}-\textbf{y}|+t^2<\delta$.
Thus, $\mathcal{\textbf{x}}\in \TT_{\limit}$ as desired.

\item[$Q(\mathcal{\textbf{x}})=0$:]

Let $\textbf{v}\in \mathbb{R}^k$ be generic, and
denote
$\widehat{P}(U)=P(\mathcal{\textbf{x}}+U\textbf{v})$,
$\widehat{Q}(U)=Q(\mathcal{\textbf{x}}+U\textbf{v})$, and
$\widehat{F}(U)=F(\mathcal{\textbf{x}}+U\textbf{v})$.
Note that
\begin{eqnarray}
\label{eqn:multiplicity}
\widehat{P} = \widehat{F}\widehat{Q}, \nonumber \\
\widehat{P}(0) = \widehat{Q}(0) = \widehat{F}(0) = 0.
\end{eqnarray}

If $F$ is not the zero polynomial, then neither is $\widehat{P}$, since $\textbf{v}$ is generic.
Indeed,
assume $F$ is not identically zero, and hence $P$
is not identically zero.
In order to prove that $\widehat{P}$ is not
identically zero for a generic choice of
$\textbf{v}$,
write $P = \sum_{0 \leq i \leq d} P_i$ where $P_i$ is the homogeneous
part of $P$ of degree $i$, and $P_d$ not identically zero. Then, it is
easy to see that $\widehat{P}(U) = P_d(\textbf{v}) U^d + \mathrm{\mbox{lower
    degree terms}}$.  Since $\mathbb{R}$ is an infinite field, a generic
choice of $\textbf{v}$ will avoid the set of zeros of $P_d$,
and thus,
$\widehat{P}$ is not identically zero.

We further require
that $\mathcal{\textbf{x}}+t\textbf{v}\in B_k(0,R)$ for $t>0$ sufficiently small.  For
generic $\textbf{v}$, this is true for either $\textbf{v}$ or $-\textbf{v}$, and so
after possibly replacing $\textbf{v}$ by $-\textbf{v}$
(and noticing that since $P_d$ is homogeneous we have $P_d(\textbf{v}) =
(-1)^d P_d(-\textbf{v})$) we may assume $\mathcal{\textbf{x}}+t\textbf{v}\in B_k(0,R)$ for $t>0$
sufficiently small.
Let $t_0>0$ be such that $\mathcal{\textbf{x}}+t\textbf{v}\in B_k(0,R)$ for $0<t< t_0$.

Denoting by $\nu = \mathrm{mult}_0(\hatP)$ and
$\mu = \mathrm{mult}_0(\hatQ)$, we have from (\ref{eqn:multiplicity})
that $\nu > \mu$.
Let
$$\begin{aligned} \hatP(U)&=\sum_{i=\nu}^{\deg_U \hatP} c_iU^i = U^\nu
  \cdot \sum_{i=0}^{\deg_U \hatP - \nu} c_{\nu+i}U^i = c_\nu U^\nu +
  \text{ (higher order terms)}, \\ \hatQ(U)&=\sum_{i=\mu}^{\deg_U \hatQ} d_iU^i = U^\mu \cdot
  \sum_{i=0}^{\deg_U \hatQ - \mu} d_{\mu+i}U^i = d_\mu U^\mu +
  \text{ (higher order terms)} \end{aligned}$$
where $c_\nu,d_\mu \neq 0$.

Then we have $$\begin{aligned} \hatP^2(U)&=c_\nu^2 U^{2\nu} +
  \text{ (higher order terms)}, \\
  \hatQ^2(U)&=d_\mu^2 U^{2\mu} +
  \text{ (higher order terms)}, \\
  D(U):=U(\hatQ^2(U)-U^N)&= U(d_\mu^2 U^{2\mu} +
  \text{ (higher order terms)}-U^N), \\
  D(U)-\hatP^2(U)&= d_\mu^2 U^{2\mu+1} + \text{ (higher order terms)} -
      U^{N+1}.
\end{aligned}$$

Since $\mu \leq \deg(Q)$ and $N = 2 \deg(Q) + 1$,
we have that $2\mu+1 < N+1$. Hence,
there exists $t_1 \in \mathbb{R}_+$ such that for
each $t$, $0 < t < t_1$,
we have $D(t)-\hatP^2(t)\geq 0$.
Thus,
$\mathcal{\textbf{x}}+t\textbf{v}\in \TT_t$ for each $t$, $0<t<\min\{t_0,t_1\}$.
Let $t_2 = (\frac{\delta}{|\textbf{v}|^2+1})^{1/2}$ and note that
for all $t$, $0<t< t_2$, we have $(|\textbf{v}|^2+1)t^2<\delta$.
Finally, if $t$ satisfies
$0<t< \min\{t_0,t_1,t_2\}$ then
$\mathcal{\textbf{x}}+t\textbf{v}\in \TT_t$,
and
$$|\mathcal{\textbf{x}}-(\mathcal{\textbf{x}}+t\textbf{v})|^2+t^2\ = \ (|\textbf{v}|^2+1)t^2 <
\delta.$$
Hence, setting $\mathcal{\textbf{y}}=\mathcal{\textbf{x}}+t\textbf{v}$ (cf. Equation \ref{eqn:Tzero}) we have shown that $\mathcal{\textbf{x}}\in \TT_\limit$ as desired.

The case where $F$ is the zero polynomial is straightforward.

\end{itemize}
\end{proof}

\begin{proof}[Proof of Theorem \ref{thm:algebraic}]
For each $F\in \mathbb{R}[X_1,\ldots,X_k]$, by the conical structure at infinity of semi-algebraic sets
(see for instance \cite[page 188]{BPRbook2}), we have
that
there exists $R_F\in \mathbb{R}_+$ such that, for every $R>R_F$, the
semi-algebraic sets $\Zer(F,\mathbb{R}^k)\cap \overline{B_k(0,R)}$ and $\Zer(F,\mathbb{R}^k)$
are semi-algebraically
homeomorphic.

Let $\ell\in \mathbb{N}$, $F_1,\ldots,F_\ell\in \mathbb{R}[X_1,\ldots,X_k]$ such that each $F_i$ has additive complexity at most $a$ and, for every $F$ having additive complexity at most $a$, the algebraic sets $\Zer(F,\mathbb{R}^k),\Zer(F_i,\mathbb{R}^k)$ are semi-algebraically homeomorphic for some $i$, $1\leq i \leq \ell$
(see, for example \cite[Theorem 3.5]{Dries}).  Let $R=\max_{1\leq i \leq \ell} \{R_{F_i}\}$.

Let $F\in \{F_i\}_{1\leq i \leq \ell}$.
By Lemma \ref{lem:equivalence} there exists polynomials $P,Q \in \mathbb{R}[X_1,\ldots,X_k]$ such that
$FQ=P$, and such that $P,Q$
satisfies $P^2-T(Q^2-T^N)\in \mathbb{R}[X_1,\ldots,X_k,T]$ has division-free additive complexity bounded by $a+2$.
Let
$$
\TT=\{(\mathcal{\textbf{x}},t)\in
\mathbb{R}^k \times \mathbb{R}_+| \;
P^2(\mathcal{\textbf{x}}) \leq t(Q^2(\mathcal{\textbf{x}})-t^N) \wedge 
|\mathcal{\textbf{x}}|^2
\leq R^2
\}.
$$

By Proposition \ref{prop:sectionfivemain}
 we have that
$
\TT_{\limit} =
\Zer(F,\mathbb{R}^k)
\cap \overline{B_k(0,R)}.
$
Note the one-parameter semi-algebraic
family $\TT$ (where the last co-ordinate is the parameter) is
described by a formula having division-free
additive format
$(a+k+2,k+1)$.

By Theorem \ref{thm:main_weak}
we obtain a collection of
semi-algebraic sets $\mathcal{S}_{k,a+k+2}$
such that $\TT_{\limit}$, and hence $\Zer(F,\mathbb{R}^k)$, is homotopy equivalent
to some $S\in \mathcal{S}_{k,a+k+2}$
and $\#\mathcal{S}_{k,a+k+2} = 2^{O(k(
k^2
+a))^8}$,
which proves the theorem.

\end{proof}

\subsection{The semi-algebraic case}
\label{subsec:semi-algebraic}
We first prove a generalization of Proposition \ref{prop:sectionfivemain}.

\begin{notation} Let $\X=(X_1,\dots,X_k)$ be a block of variables
and $\textbf{k}=(k_1,\dots,k_n)\in \mathbb{N}^n$ with
$\sum_{i=1}^n k_i=k$.  Let $\textbf{r}=(r_1,\dots,r_n)\in \mathbb{R}^n$ with $r_i>0$,
$i=1,\dots,n$.  Let $B_{\textbf{k}}(0,\textbf{r})$ denote the product
$$B_{\textbf{k}}(0,\textbf{r}):= B_{k_1}(0,r_1)\times \dots \times B_{k_n}(0,r_n).$$
\end{notation}

\begin{notation}{\label{not:reali}}
For any first order formula $\Phi$
with $k$ free variables, we denote by $\Reali(\Phi)$ the semi-algebraic
subset of $\mathbb{R}^k$ defined by $\Phi$.
\hide{
Additionally, if $\mathcal{P}\subset \mathbb{R}[X_1,\dots,X_k]$
consists of the polynomials appearing in $\Phi$, then we call $\Phi$ a $\mathcal{P}$-formula.
}
\end{notation}

\begin{proposition}
\label{prop:level1}
Let $F_1,\dots,F_s,P_1,\dots,P_s,Q_1,\dots,Q_s\in
\mathbb{R}[\X^1,\dots,\X^n]$, $\mathcal{P}=\{F_1,\dots,F_s\}$
such that $F_iQ_i=P_i$, for all $i=1,\ldots,s$.  Suppose
$\X^i=(X^i_1,\dots,X^i_{k_i})$ and let \emph{$\textbf{k}=(k_1,\dots,k_n)$}.
Suppose $\phi$ is a $\mathcal{P}$-formula containing no negations and no
inequalities.  Let $$\begin{aligned} \bar{P_i}:=&P_i\prod_{j\neq
    i}Q_j, \\ \bar{Q}:=&\prod_j Q_j, \end{aligned}$$ and let
$\bar{\phi}$ denote the formula
obtained
from $\phi$
by replacing each $F_i=0$ with
$$\bar{P_i}^2 - U(\bar{Q}^2-U^N) \leq 0,$$
where $U$ is the last variable of $\bar{\phi}$,
$N=2\deg (\bar{Q})+1$.  Then,
for every $\emph{\textbf{r}}=(r_1,\dots,r_n)\in \mathbb{R}_+^n$,
we have (cf. Notation \ref{not:reali} and Notation \ref{not:limit})
\begin{equation}
\label{eqn:level1}
\Reali\left(\bigwedge_{i=1}^n (|\X^i|^2\leq r_i^2) \wedge \bar{\phi}
\wedge U>0
\right)_{\limit}=\Reali(\phi) \cap \overline{B_{\emph{\textbf{k}}}(0,\emph{\textbf{r}})}.
\end{equation}
\end{proposition}

\begin{proof} We follow the proof of Proposition \ref{prop:sectionfivemain}.
The only case which is not immediate is the case $\mathcal{\textbf{x}}\in
\Reali(\phi)\cap \overline{B_{\textbf{k}}(0,\textbf{r})}$ and $\bar{Q}(\mathcal{\textbf{x}})=0$.

\newcommand{\barphi}{\bar{\phi}}
\newcommand{\barphialpha}{{\bar{\phi}_{\alpha}}}

Suppose $\mathcal{\textbf{x}}\in \Reali(\phi)\cap \overline{B_{\textbf{k}}(0,\textbf{r})}$ and
that $\bar{Q}(\mathcal{\textbf{x}})=0$.  Since $\phi$ is a formula containing no
negations and no inequalities, it consists of conjunctions and
disjunctions of equalities.  Without loss of generality we can assume
that $\phi$ is written as a disjunction of conjunctions, and still
without negations.  Let
$$\phi = \bigvee_{\alpha} \phi_\alpha$$ where $\phi_\alpha$ is a
conjunction of equations.  As above let $\barphialpha$ be the formula
obtained from $\phi_\alpha$ after replacing each $F_i=0$ in
$\phi_\alpha$ with
$$\bar{P_i}^2\leq U(\bar{Q}^2-U^N),$$ $N=2\deg (\bar{Q})+1$.

We have
$$\begin{aligned} \Reali\left(\bigwedge_{i=1}^p (|\X^i|^2\leq r_i^2
  )\wedge \barphi
\wedge U>0
 \right)_{\limit} & =\Reali\left( \bigwedge_{i=1}^p
  (|\X^i|^2\leq r_i^2 )\wedge \left(\bigvee_\alpha \barphialpha \right)
\wedge U>0
  \right)_{\limit} \\ & =\Reali\left( \bigvee_{\alpha} \bigwedge_{i=1}^p
  (|\X^i|^2\leq r_i^2 )\wedge \barphialpha
\wedge U>0
\right)_{\limit} \\ & =
  \bigcup_\alpha \Reali\left(\bigwedge_{i=1}^p (|\X^i|^2\leq r_i^2 )\wedge
  \barphialpha
\wedge U>0
\right)_{\limit}.
\end{aligned}$$

In order to show that $\mathcal{\textbf{x}}\in
\Reali\left(\bigwedge_{i=1}^p (|\X^i|^2\leq r_i^2)\wedge \barphi
\wedge U>0
\right)_{\limit}$ it now suffices to show that if $\mathcal{\textbf{x}} \in
\Reali(\phi_\alpha)\cap \overline{B_{\textbf{k}}(0,\textbf{r})}$ and
$\bar{Q}(\mathcal{\textbf{x}})=0$, then $\mathcal{\textbf{x}} $
belongs to $ \Reali\left(\bigwedge_{i=1}^p
(|\X^i|^2\leq r_i^2 )\wedge \barphialpha
\wedge U>0
\right)_{\limit}$.

Let $\mathcal{\textbf{x}}\in \Reali(\phi_\alpha)\cap \overline{B_{\textbf{k}}(0,\textbf{r})}$ and suppose
$\bar{Q}(\mathcal{\textbf{x}})=0$. Let $\mathcal{Q}\subseteq \mathcal{P}$
consist of the polynomials of $\mathcal{P}$ appearing
in $\phi_\alpha$.
Let $\textbf{v}\in \mathbb{R}^k$ be generic, and
denote
$\widehat{P_i}(U)=\bar{P_i}(\mathcal{\textbf{x}}+U\textbf{v})$,
$\widehat{Q}(U)=\bar{Q}(\mathcal{\textbf{x}}+U\textbf{v})$, and
$\widehat{F_i}(U)=\bar{F}(\mathcal{\textbf{x}}+U\textbf{v})$.
Note that
\begin{eqnarray}
\label{eqn:multiplicity-two}
\widehat{P_i} = \widehat{F_i}\widehat{Q}, \nonumber \\
\widehat{P_i}(0) = \widehat{Q}(0) = \widehat{F_i}(0) = 0.
\end{eqnarray}

\newcommand{\hatPi}{\widehat{P_i}} As in the proof of Proposition
\ref{prop:sectionfivemain}, if $F_i\in \mathcal{Q}$ is not the zero
polynomial then
$\widehat{P_i}$ is not
identically zero.  Since $\phi_\alpha$ consists of a conjunction of
equalities and
$$\bigwedge_{F\in \mathcal{Q} \atop F\not\equiv 0} F=0 \iff \bigwedge_{F\in
  \mathcal{Q}} F=0,$$ we may assume that $\mathcal{Q}$ does not contain the zero
polynomial.  Under this assumption,
we have that for every $F_i\in \mathcal{Q}$ the univariate polynomial
$\widehat{P_i}$ is not identically zero.  As in the proof of
Proposition \ref{prop:sectionfivemain},
there exists $t_0\in \mathbb{R}_+$ such
that for all $t$, $0<t<t_0$, we have
$\mathcal{\textbf{x}}+t\textbf{v}\in B_{\textbf{k}}(0,\textbf{r})$.
Denoting by $\nu_i = \mathrm{mult}_0(\hatPi)$ and $\mu =
\mathrm{mult}_0(\hatQ)$, we have from (\ref{eqn:multiplicity-two}) that
$\nu_i > \mu$ for all $i=1,\dots,s$.

Let
$$\begin{aligned} \hatPi(U)&=\sum_{j=\nu_i}^{\deg_U \hatPi} c_jU^j =
  U^{\nu_i} \cdot \sum_{j=0}^{\deg_U \hatPi - \nu_i} c_{\nu_i+j}U^j =
  c_{\nu_i} U^{\nu_i} + \text{ (higher order terms)},
  \\ \hatQ(U)&=\sum_{j=\mu}^{\deg_U \hatQ} d_j^j = U^\mu \cdot
  \sum_{j=0}^{\deg_U \hatQ - \mu} d_{\mu+j}U^j = d_\mu U^\mu + \text{
    (higher order terms)} \end{aligned}$$ where $d_\mu \neq 0$ and
$c_{\nu_i}\neq 0 $.

Then we have $$\begin{aligned} \hatPi^2(U)&=c_{\nu_i}^2 U^{2\nu_i} +
  \text{ (higher order terms)}, \\
  \hatQ^2(U)&=d_\mu^2 U^{2\mu} +
  \text{ (higher order terms)}, \\
  D(t):=U(\hatQ^2(U)-U^N)&= U(d_\mu^2 U^{2\mu} +
  \text{ (higher order terms)}-U^N), \\
  D(U)-\hatPi^2(U)&= d_\mu^2 U^{2\mu+1} + \text{ (higher order terms)} -
      U^{N+1}.
\end{aligned}$$

Since $\mu \leq \deg(\bar{Q})$ and $N = 2 \deg(\bar{Q}) + 1$,
we have that $2\mu+1 < N+1$. Hence, there exists $t_{1,i}\in \mathbb{R}_+$ such that for all $t$, $0 < t < t_{1,i}$,
we have that $D(t)-\hatPi^2(t)\geq 0$, and thus
$\mathcal{\textbf{x}}+t\textbf{v}$ satisfies
$$\bar{P}_i^2(\mathcal{\textbf{x}}+t\textbf{v})\leq t(\bar{Q}^2(\mathcal{\textbf{x}}+t\textbf{v})-t^N).$$
Let $t_1=\min\{t_{1,1},\dots,t_{1,s}\}$.
Let $t_2 = (\frac{\delta}{|\textbf{v}|^2+1})^{1/2}$ and note that
for all $t\in \mathbb{R}$, $0<t< t_2$, we have $(|\textbf{v}|^2+1)t^2<\delta$.
Finally, if $t$ satisfies $0<t< \min\{t_0,t_1,t_2\}$ then
$$(\mathcal{\textbf{x}}+t\textbf{v},t)\in \Reali\left(\bigwedge_{i=1}^p
(|\X^i|^2\leq r_i^2)\wedge \barphialpha
\wedge U>0
\right)$$
and
$$|\mathcal{\textbf{x}}-(\mathcal{\textbf{x}}+t\textbf{v})|^2+t^2\ = \ (|\textbf{v}|^2)t^2 <
\delta,$$
and so we have shown that
$$\mathcal{\textbf{x}}\in \Reali\left(\bigwedge_{i=1}^p (|\X^i|^2\leq r_i^2)\wedge
\barphialpha
\wedge U>0
\right)_{\limit} .$$
\end{proof}

Using the same notation as in Proposition \ref{prop:level1} above:
\begin{corollary}
\label{cor:level1}
Let $\phi$ be a $\mathcal{P}$-formula, containing no negations
and no inequalities, with
$\mathcal{P} \subset \R[X_1,\ldots,X_k]$
with
$\mathcal{P}\in \mathcal{A}_{k,a}$.
Then, there exists a family
of polynomials $\mathcal{P}' \subset \R[X_1,\ldots,X_k,U]$,
and a
$\mathcal{P}'$-formula $\bar{\phi}$
satisfying (\ref{eqn:level1}), and such that
$\mathcal{P}'\in \mathcal{A}^{\mathrm{div-free}}_{k+1,(k+a)(a+2)}$.
\end{corollary}

\begin{proof}
The proof is immediate from
Lemma \ref{lem:equivalence}, Remark \ref{rem:zero}, and
the definition of $\bar{\phi}$.
\end{proof}

\begin{definition}
Let $\Phi$ be a $\mathcal{P}$-formula, $\mathcal{P}\subseteq \mathbb{R}[\X_1,\dots,\X_k]$, and say
that $\Phi$ is a \emph{$\mathcal{P}$-closed formula} if the formula $\Phi$ contains
no negations and all the inequalities in atoms of $\Phi$ are weak inequalities.
\end{definition}

Let $\mathcal{P} = \{F_1,\ldots,F_s\} \subset \mathbb{R}[X_1,\ldots,X_k]$, and
$\Phi$ a $\mathcal{P}$-closed formula.

For $R \in \mathbb{R}_+$, let $\Phi_R$ denote the formula $\Phi \wedge (|\X|^2 -R^2
\leq 0)$.

Let $\Phi^\dagger$ be the formula obtained from $\Phi$
by replacing each
occurrence of the atom $F_i\ast 0$, $\ast\in \{=,\leq,\geq\}$,
$i=1,\dots,s$, with
$$
\begin{aligned}F_i-V_i^2=0 & \mbox{ if } \ast \in \{\leq\}, \\
-F_i-V_i^2= 0 &  \mbox{ if } \ast \in\{\geq\}, \\
F_i=0 & \mbox{ if }\ast \in \{=\},
\end{aligned}
$$
and for
$R, R' \in \mathbb{R}_+$, let $\Phi^\dagger_{R,R'}$ denote the
formula
\[
\Phi^\dagger \wedge (U_1^2 + |\X|^2 -R^2 = 0) \wedge (U_2^2 +
|\V|^2 - R'^2 = 0).
\]

We have
\begin{proposition}{\label{prop:sectionfivemain2}}
\[
\Reali(\Phi) = \pi_{[1,k]}(\Reali(\Phi^\dagger)),
\]
and for all $0 < R \ll R'$,
\[
\Reali(\Phi_R) = \pi_{[1,k]}(\Reali(\Phi^\dagger_{R,R'})),
\]
\end{proposition}

\begin{proof}
Obvious.
\end{proof}

Note that, for $0< R \ll R'$, $\pi_{[1,k]}|_{
\Reali(\Phi^\dagger_{R,R'})}$ is a continuous,
semi-algebraic surjection onto
$\Reali(\Phi_R)$.
Let $\pi_{R,R'}$ denote the map
$\pi_{[1,k]}|_{\Reali(\Phi^\dagger_{R,R'})}$.

\begin{proposition}
\label{prop:reduction2}
We have that  $\mathcal{J}^p_{\pi_{R,R'}}
(\Reali(\Phi^\dagger_{R,R'}))$
is $p$-equivalent to $\pi_{[1,k]}(\Reali(\Phi^\dagger_{R,R'}))$.
Moreover, for any two formulas $\Phi,\Psi$, the realizations
$\Reali(\Phi)$ and $\Reali(\Psi)$ are homotopy equivalent if,
for all $1 \ll R \ll R'$,
$$\Reali(\mathcal{J}^p_{\pi_{R,R'}}(\Phi^\dagger_{R,R'}))\simeq
\Reali(\mathcal{J}^p_{\pi_{R,R'}}(\Psi^\dagger_{R,R'}))$$
are homotopy equivalent for some $p > k$.
\end{proposition}
\begin{proof}
Immediate from Proposition \ref{prop:5} and
Propositions \ref{prop:top_basic} and \ref{prop:sectionfivemain2}.
\end{proof}

Suppose that
$\Phi$ has additive format
bounded by $(a,k)$, and suppose that the number of polynomials
appearing $\Phi$ is $s$,
and without loss of generality
we can assume that $s \leq k+a$ (see
Remark \ref{rem:zero}).
Then the sum of the
additive complexities of the polynomials appearing in
$\Phi^{\dagger}_{R,R'}$ is bounded by
$3a+3s+2\leq 3a+3(a+k)+2\leq 6(k+a)$,
and the formula $\Phi^{\dagger}_{R,R'}$ has additive
format bounded by
$(6(k+a),2k+a+2)$.
\hide{
Using the notation of
Proposition \ref{prop:level1},
let $\mathcal{J}^{p}_{\pi_{R,R'}}(\Phi^\dagger_{R,R'})^\star$ denote
the formula,
with last variable $U$,
\begin{equation}
{\label{eqn:star}}
\Omega^R \wedge \overline{\left(\Theta_1 \wedge \Theta_2^{\Phi^\dagger_{R,R'}}\wedge \Theta_3^{\pi_{R,R'}}\right)}
\wedge U>0.
\end{equation}
}
Consequently, the additive format of the formula
$$\Theta_1\wedge \Theta_2^{\Phi^\dagger_{R,R'}} \wedge \Theta_3^{\pi_{R,R'}}$$ is bounded by $(M,N)$,
$$
\begin{aligned}
M &= (p+1)(6k+6a+1)+\textstyle{\binom{p+1}{2}}(4k+2a+3) \\
N &= (p+1)(2k+a+3)+\textstyle{\binom{p+1}{2}}.
\end{aligned}
$$
In the above, the estimates of Proposition \ref{prop:calDp} suffice, with $(a,k)$ replaced by $(6(k+a), 2k+a+2)$.  Now, applying Corollary \ref{cor:level1} we have that there exists a $\mathcal{P}'$-formula
$$\overline{\left(\Theta_1\wedge \Theta_2^{\Phi^\dagger_{R,R'}} \wedge \Theta_3^{\pi_{R,R'}}\right)}$$
which satisfies Equation \ref{eqn:level1} and such that the \emph{division-free} additive format of this formula is bounded by
$((N+M)(M+2) ,N+1 )$.  Finally, let $\mathcal{J}^{p}_{\pi_{R,R'}}(\Phi^\dagger_{R,R'})^\star$ denote
the formula,
with last variable $U$,
\begin{equation}
{\label{eqn:star}}
\Omega^R \wedge \overline{\left(\Theta_1 \wedge \Theta_2^{\Phi^\dagger_{R,R'}}\wedge \Theta_3^{\pi_{R,R'}}\right)}
\wedge U>0,
\end{equation}
and we have that the \emph{division-free} additive format of $\mathcal{J}^p_{\pi_{R,R'}}(\Phi^\dagger_{R,R'})^\star$ is bounded by $(
M',N+1)$,
$$M'=(p+1)(2k+a+3)+(N+M)(M+2).$$
Note that $M'\leq 5M^2$.

We have shown the following,

\begin{proposition}
\label{prop:boundonT}
Suppose that the sum of the  additive complexities of
$F_i, 1\leq i \leq s$, is bounded by $a$. Then, the semi-algebraic set
$\Reali(\mathcal{J}^{p}_{\pi_{R,R'}}(\Phi^{\dagger}_{R,R'})^\star)$
can be defined by a
$\mathcal{P}'$-formula with
$\mathcal{P}' \in
\mathcal{A}^{\mathrm{div-free}}_
{
5M^2,N+1}$,
$$\begin{aligned}
M&= (p+1)(6k+6a+1)+2\textstyle{\binom{p+1}{2}}(4k+2a+3)\\
N&=  (p+1)(2k+a+3)+\textstyle{\binom{p+1}{2}}.
\end{aligned}$$
\end{proposition}


\hide{
\begin{proposition}
\label{prop:boundonT}
Suppose that the sum of the  additive complexities of
$F_i, 1\leq i \leq s$, is bounded by $a$. Then, the semi-algebraic set
$\Reali(\mathcal{J}^{p}_{\pi_{R,R'}}(\Phi^{\dagger}_{R,R'})^\star)$
can be defined by a
$\mathcal{P}'$-formula with
$\mathcal{P}' \in
\mathcal{A}^{\mathrm{div-free}}_
{
4M^2,N}$,
$$\begin{aligned}
M&= (p+1)(5k+6a+7)+2\textstyle{\binom{p+1}{2}}(k+a+3)\\
N&=  (p+1)(k+a+4)+\textstyle{\binom{p+1}{2}}.
\end{aligned}$$
\end{proposition}

\begin{proof} If $\Phi$ is a $\mathcal{P}$-formula, $\mathcal{P}=\{F_1,\dots,F_s\}$, such
that
$\Phi$ has additive format bounded by $(a,k)$, then
$\Phi^\dagger$ has additive format bounded by
$(3(a+s),k+s)$.
From the
definition of $\Phi^\dagger_{R,R'}$, it is clear that if
$\Phi^\dagger$ has additive format bounded by
$(3(a+s),k+s)$, then
$\Phi^\dagger_{R,R'}$ has additive format bounded by
$(k+3a+4s+2,k+s+2)$.
After possibly replacing $\Phi^\dagger$ by an equivalent one (see Remark \ref{rem:zero}), we may assume that it has additive format bounded by
$(5k+7a+2,2k+a+2)$.

From the definition of $\mathcal{J}^p_f(\Phi^\dagger_{R,R'})$
(Equation \ref{eqn:fibjoin}), in the case where $f=\pi_{R,R'}$,
\hide{
and
the formula
$\Phi$ of the definition is replaced by $\Phi^\dagger_{R,R'}$,
}
we have that the additive format of
$\mathcal{J}^p_{\pi_{R,R'}}(\Phi^\dagger_{R,R'})$ is bounded by $(M,N)$,
$$\begin{aligned}
M&= (p+1)(7k+8a+6)+2\textstyle{\binom{p+1}{2}}(2k+a+2),\\
N &=(p+1)(2k+a+3)+\textstyle{\binom{p+1}{2}}.
\end{aligned}$$
In the above, for $f=\pi_{R,R'}$, the estimates of Proposition
\ref{prop:calDp} suffice, with $(a,k)$ replaced by
$(5k+7a+2,2k+a+2)$.
Finally, if $\phi$ has additive format bounded by $(a,k)$ and consists of $s$ polynomials, then
$\bar{\phi}$ (cf. Proposition \ref{prop:level1}) has
division-free
additive format
bounded by
$(s(a+2),k+1)$, hence is equivalent to a formula with division-free additive format bounded by
$((k+a)(a+2),k+1)$.
Thus, we have
$\Omega^R \wedge \overline{\left(\Theta_1 \wedge \Theta_2^{\Phi^\dagger_{R,R'}}\wedge \Theta_3^{\pi_{R,R'}}
\right)}
\wedge U>0 $
has 
division-free
additive
format bounded by
$((N+M)(M+2),N+1)$.  After making the estimate
$(N+M)(M+2) \leq 4M^2$ the proposition follows.

\end{proof}
}

Finally, we obtain

\begin{proposition}
\label{prop:closedandbounded}
The number of distinct homotopy types of
semi-algebraic subsets of
$\mathbb{R}^k$ defined by $\mathcal{P}$-closed formulas
with $\mathcal{P} \in \mathcal{A}_{a,k}$ is
bounded by
$2^{(k(k+a))^{O(1)}}$.
\end{proposition}
\begin{proof}
Let $\mathcal{P} \in \mathcal{A}_{a,k}$.
By the conical structure at infinity of semi-algebraic sets
(see, for instance \cite[page 188]{BPRbook2})
there exists
$R_{\mathcal{P}}>0$
such that, for all $R>R_\mathcal{P}$
and every $\mathcal{P}$-closed formula $\Phi$,
the
semi-algebraic
sets $\Reali(\Phi_R), \Reali(\Phi)$ are
semi-algebraically homeomorphic.

For each $a,k\in \mathbb{N}$, there are only finitely many semi-algebraic homeomorphism types
of semi-algebraic sets described by a $\mathcal{P}$-formula having additive complexity at most $(a,k)$ \cite[Theorem 3.5]{Dries}.
Let $\ell\in \mathbb{N}$, $\mathcal{P}_i\in \mathcal{A}_{a,k}$, and $\Phi_i$ a $\mathcal{P}_i$-formula, $1\leq i \leq \ell$, such that every semi-algebraic set described by a formula
of additive complexity at most $(a,k)$ is semi-algebraically homeomorphic to $\Reali(\Phi_i)$ for some $i$, $1\leq i \leq \ell$.
Let $R=\max_{1\leq i \leq \ell} \{R_{\mathcal{P}_i}\}$ and $R'\gg R$.

Let $\Phi\in \{\Phi_i\}_{1\leq i \leq \ell}$.
By Proposition \ref{prop:reduction2} it suffices to bound the number
of distinct
homotopy types of the semi-algebraic
set
$\Reali(\mathcal{J}^{k+1}_{\pi_{R,R'}}(\Phi^\dagger_{R,R'}))$.
By Proposition \ref{prop:level1}, we have that
$$\Reali\left(
\mathcal{J}^{k+1}_{\pi_{R,R'}}(\Phi^\dagger_{R,R'})^\star
 \right)_{\limit}
=\Reali(\mathcal{J}^{k+1}_{\pi_{R,R'}}(\Phi^\dagger_{R,R'})).$$
By Proposition \ref{prop:boundonT},
the division-free additive format of the formula
$\mathcal{J}^{k+1}_{\pi_{R,R'}}(\Phi^\dagger_{R,R'})^\star$
is bounded by
$(2M,N)$,
where $p=k+1$.
The proposition now follows immediately from
 Theorem \ref{thm:main_weak}.
\end{proof}

\begin{proof}[Proof of Theorem \ref{thm:main1}]
Using the construction of Gabrielov and Vorobjov
\cite{GV07} one can reduce
the case of arbitrary semi-algebraic sets to that of
a
closed and bounded one,
defined by a $\mathcal{P}$-closed formula,
without changing asymptotically the complexity estimates (see for example
\cite{BV06}).
The theorem then follows directly from Proposition \ref{prop:closedandbounded}
above.
\end{proof}

\subsection{Proof of Theorem \ref{thm:main}}
\begin{proof}[Proof of Theorem \ref{thm:main}]
The proof is identical to that of the proof of Theorem \ref{thm:main_weak},
except that we use
Theorem \ref{thm:main1} instead of
Theorem \ref{thm:additive}.
\end{proof}



\chapter{Semi-cylindrical decomposition}{\label{ch:semi}}

Decomposition theorems are ubiquitous in real algebraic geometry as well as in computational geometry, used to study the topology of semi-algebraic sets or used, for example, in point location algorithms.  In the latter case, given a subset $X$ of $\R^k$ in some well-behaved class of sets (e.g., $X$ is a polygon, definable, semi-algebraic, etc.), the goal is to partition $\R^k$ into sets, typically called \textit{cells} of the decomposition, such that the union of cells in some sub-collection of this partition is $X$.  


For a concrete example, we turn to the well-known cylindrical decomposition for semi-algebraic sets.  
This decomposition theorem provides a partition of $\R^k$ into finitely many cells so that the union of some sub-collection of the cells is $X$.  
The cells in this case are semi-algebraic and have \textit{bounded description complexity} in that they are all semi-algebraic and can be described using a finite list of polynomials, whose number and degree are bounded by a function of the number and maximum degree of the polynomials appearing in a description of $X$.  
Partitions of this type are sometimes called \textit{Tarski decompositions} in the literature.  
A typical quantitative question one can ask is, given $X_1,\ldots,X_n$ subsets of $\R^k$, can one prove existence of decomposition of $\R^k$ into sets of bounded description complexity such that the number of the sets in the decomposition is as few as possible, bounded by a function of $n,k$ and the description complexity of the sets $X_1,\ldots,X_n$.  

Another example of a Tarski decomposition, a finite partition of $\R^k$ into sets of bounded description complexity, coming from computational geometry is that of $\ep$-cuttings \cite{matousek1995approximations}.  A \textit{cutting} is a collection of (possibly unbounded) simplices with disjoint interiors which cover $\R^k$. Given a set $\mathcal{H}$ of $n$ hyperplanes in $\R^k$, an $\ep$-cutting is a cutting $\Delta$ such that for every simplex $s\in \Delta$ the numbef of hyperplanes in $\mathcal{H}$ which intersect the interior of $s$ is at most $\ep n$.  It was proved by Chazelle and Friedman in \cite{chazelle1990deterministic} that for any $n$ hyperplanes and any $r\leq n$ an $(1/r)$-cutting exists of size $O(r^k)$, and that there exists an algorithm for computing this cutting.  The technique of $\ep$-cuttings have been used by many authors to make progress on several problems in computational geometry 
as the \textit{divide step} in several divide-and-conquer arguments where a problem (e.g., point location, incidence problems) is broken up into several easier to handle problems.  


  

It is often the case that one is primarily concerned with the asymptotic behaviour of the 
number of sets in the partition 
as the number of sets 
to be partitioned 
increases, while 
treating 
 the description complexity of 
$X$ 
the sets 
and the dimension of the ambient space $k$ as constants.  
In these settings, it is the \textit{combinatorial part} of the bounds that are relevant, as opposed to the \textit{algebraic part} of the bounds.  Loosely speaking, the combinatorial part of a bound is the part that depends on $n$ the number of sets and the dimension $k$ of the ambient space, while the algebraic part is the part that depends on the description complexity of the sets involved.  

When one is primarily concerned with the combinatorial part of the complexity of a decomposition, then the setting of o-minimal geometry and definable sets is an appropriate framework for analyzing the problem.  In this setting, the role of bounded description complexity is replaced by having all the sets involved arise as fibers of a single definable set in a larger space, which parametrizes the family from which the sets involved are taken. In this way the combinatorial part of the bound is brought into focus while the algebraic part is subsumed as part of the dimension of the ambient space.  When one reads the statement of the following theorems it may help to keep this in mind.  In order to state the theorems we define the structure of a cylindrical decomposition.  
  




A cylindrical definable decomposition, or cdd, of $\R^k$ is a partition of $\R^k$ into finitely many definable sets of a particular type, namely that these sets are defined inductively as follows.

\begin{itemize}

\item A cdd of $\R$ consists of a finite set of points $a_1<\ldots<a_m$ of $\R$ and the set of maximal open intervals $(-\infty,a_1),(a_1,a_2),\ldots,(a_{m-1},a_m),(a_m,\infty)$ in the complement of those points in $\R$.  

\item A cdd of $\R^k$ with $k>1$ consists of a cdd, $\{C_\alpha\}_{\alpha\in I}$, of $\R^{k-1}$ and for each $\alpha \in I$ a cdd $\{A_\beta\}_{\beta\in J}$ of $\R$ and definable homeomorphisms $\phi_{\alpha,\beta}: C_\alpha \times A_\beta \to Y_{\alpha,\beta}$ satisfying  $\pi^{k}_{\leq k-1}\circ \phi_{\alpha,\beta}=\pi^{k}_{\leq k-1}$ and which satisfy $\{Y_{\alpha,\beta}\}_{\alpha,\beta}$ is a partition of $\R^k$. The cdd of $\R^k$ is then given by $\{Y_{\alpha,\beta}\}_{\alpha\in I,\beta \in J}$. 

\end{itemize}

\noindent We make the following inductive definition, that of a \textit{cylindrical set}.

\begin{itemize}

\item A cylindrical set of $\R$ is a point $a\in \R$ or the open interval $(a,b)$ between two points $a,b\in \R$. 
\item Given a cylindrical set $X$ of $\R^{k-1}$ and definable continuous functions $\xi,\xi_1,\xi_2:Y\to \R$ satisfying $\xi_1(x)<\xi_2(x)$ for all $x\in X$, a cylindrical set $Y$ of $\R^k$ is either a \textit{graph} of the form $Y=\{(x,a)\in \R^k|\; \xi(x)=a\}$ or a \textit{band} of the form $\{(x,a)\in \R^k|\; \xi_1(x)<a<\xi_2(x)\}$.  

\end{itemize}

A partition of $\R^k$ into definable sets is called a \textit{semi-cylindrical decomposition} if each cell of the decomposition is a cylindrical set.  As we will see, a semi-cylindrical decomposition need not have a \textit{cylindrical structure}, in that given a cell $Y$ of a semi-cylindrical decomposition of $\R^k$, it need not be the case that the cylinder $\pi^k_{\leq k-1}(Y)\times R$ is a union of cells of the decomposition.  

A subset $A$ of $\R^k$ is said to be \textit{compatible} with a set $X$ if $A\subseteq X$ or $A\cap X=\emptyset$, and   
if $\mathcal{A}$ is a partition of $\R^k$ and $X_1,\ldots,X_n$ are subsets of $\R^k$ such that $A$ is compatible with $X_1,\ldots,X_n$ for every $A\in \mathcal{A}$, then $\mathcal{A}$ is said to be \textit{adapted} to $X_1,\ldots,X_n$.

Let $R$ be a real closed field.

The following theorem is proved in \cite{Basu9}. 

\begin{theorem}[Quantitative cylindrical definable decomposition \textnormal{\cite{Basu9}}]{\label{thm:cdd}}
Let $T\subseteq \R^{k+\ell}$ be a definable set, then there exists constants $C_1,C_2$ and a finite family of definable sets $\mathcal{A}$, $|\mathcal{A}|\leq C_1$, and $$A \subseteq \R^{k}\times \R^{2(2^k-1)\ell }, \; A\in \mathcal{A},$$ which satisfy the following property.  For any $n$ points $y_1,\ldots,y_n\in \R^{\ell}$, some sub-collection of the sets $$\pi^{k+2(2^k-1)\ell}_{\leq k}\left({(\pi^{k+2(2^k-1)\ell}_{>k})}^{-1}(y_{i_1},\ldots,y_{i_{2(2^k-1)}})\quad \cap \quad A \right),$$ $A\in \mathcal{A}$ and $1\leq i_1,\ldots,i_{2(2^k-1)}\leq n$, form a cylindrical definable decomposition adapted to $T_{y_1},\ldots,T_{y_n}$.  Also, the number of sets in the decomposition is bounded by 
$C_2 n^{2^k-1}$. 
\end{theorem}

We prove a theorem which is weaker in that it does not ensure a cylindrical structure to the decomposition, yet the number of sets required is asymptotically better and still the cells of the decomposition are of constant description complexity. 


\begin{theorem}[Quantitative semi-cylindrical definable decomposition]{\label{thm:scdd}}
Let $T\subseteq \R^{k+\ell}$ be a definable set, then there exists constants $C_1,C_2$ and a finite family of definable sets $\mathcal{A}$, $|\mathcal{A}|\leq C_1$, and $$A \subseteq \R^{k}\times \R^{2(2^k-1)\ell }, \; A\in \mathcal{A},$$ which satisfy the following property.  For any $n$ points $y_1,\ldots,y_n\in \R^{\ell}$, some sub-collection of the sets $$\pi^{k+2(2^k-1)\ell}_{\leq k}\left({(\pi^{k+2(2^k-1)\ell}_{>k})}^{-1}(y_{i_1},\ldots,y_{i_{2(2^k-1)}})\quad \cap \quad A \right),$$ $A\in \mathcal{A}$ and $1\leq i_1,\ldots,i_{2(2^k-1)}\leq n$, form a 
semi-cylindrical decomposition 
of $\R^k$ adapted to $T_{y_1},\ldots,T_{y_n}$.  Also, the number of sets in the decomposition is bounded by $C_2 n^{2k-1}$.
\end{theorem}

\section{Some preliminaries}{\label{subsec:prelim}}

We first make some preliminary definitions.

\subsection{o-minimal structures}

A \textit{structure} $\mathcal{S}(\R)$ on 
$\R$ is
a collection families of subsets of $\R^\ell$, $\{\mathcal{S}_\ell\}_{\ell\geq 1}$,
satisfying: 
  
\begin{enumerate}

\item[$(i)$] all algebraic subsets of $\R^\ell$ are in $\mathcal{S}_\ell$,
\item[$(ii)$] each family $\mathcal{S}_\ell$ is a boolean algebra, \textit{i.e.}, closed under finite unions, intersections and taking complements,
\item[$(iii)$] for $A\in \mathcal{S}_m$ and $B\in \mathcal{S}_n$, we have $A\times B\in
  \mathcal{S}_{m+n}$.  
\item[$(iv)$] if $\pi^{n+1}_{ \leq n}:R^{n+1} \to \R^n$ is the projection map which
  forgets the last coordinate and $A\in S_{n+1}$, then $\pi^{n+1}_{\leq 
    n}(A)\in S_n$.
\end{enumerate}
If additionally
  \begin{enumerate}
\item[$(v)$] the elements of $\mathcal{S}_1$ are precisely finite unions of points
  and open intervals, 
\end{enumerate}
then the structure is said to be \textit{o-minimal}.
For any structure $\mathcal{S}(\R)$, a set $A\subseteq \R^\ell$ is said to be \textit{definable} in the
structure $\mathcal{S}(\R)$ if $A\in \mathcal{S}_\ell$.

\begin{remark}
It is well known that the collection of semi-algebraic subsets over $\R$, sets defined by a boolean combination of real polynomial equalities and inequalities, is an o-minimal structure.  The non-trivial but well known fact that the collection of semi-algebraic subsets is closed under taking projections follows from the Tarski-Seidenberg quantifier elimination Theorem \cite{BPRbook2}.

\end{remark}

\subsection{Endpoints and adapted partitions}{\label{sec:end}}

Two definitions that will be important in our exposition are that of the endpoints of a definable subset of $\R$ and the common refinement of a definable subset of $\R$. Endpoints of a definable subset $A$ of $\R$ are those elements in $\R$ which are on the boundary of the definable set.  Their number is finite (see Lemma \ref{lem:finite}), and the set of endpoints of $A$ and the set of maximal open intervals in their complement comprise the coarsest partition of $\R$ which is a cdd and which is adapted to $A$, which we refer to as the common refinement of $A$.  More precisely,   
\begin{definition}{\label{def:endpoints}}
The set of \textnormal{endpoints} of $A$ is 
the set $\textrm{clos}{A}\smallsetminus \textrm{int}{A}$, where the closure $\textrm{clos}{A}$ of $A$ and interior $\textrm{int}{A}$ of $A$ are taken in the order topology  of $\R$ (or the Euclidean topology if the language contains $+,\times$).  
\end{definition}

\begin{lemma}{\label{lem:end1}}
Let $A_1,\ldots,A_n\subseteq \R$ be definable.  Then the set of endpoints of $\cup_{1\leq i \leq n} A_i$ is contained in  the union of the sets of endpoints of $A_i$, $1\leq i \leq n$.  In symbols,
\begin{equation}{\label{eqn:end1}}\displaystyle\textrm{clos}\left(\cup_{1\leq i \leq n} A_i\right)\smallsetminus \textrm{int}\left(\cup_{1\leq i \leq n} A_i \right) \subseteq  \bigcup_{1\leq i \leq n} \textrm{clos}(A_i)\smallsetminus \textrm{int}(A_i).\end{equation}
\end{lemma}
\begin{proof}
Equation \ref{eqn:end1} can be shown to hold for arbitrary sets $A_i$, $1\leq i \leq n$, in any topological space $X$ using elementary point set topology (e.g., Munkres).  The lemma then follows from the definitions. 

\hide{
(\textit{using elementary point set topology}: The closure of a set $B$ is the intersection of all closed sets containing $B$.  We first demonstrate that $\textrm{clos}(\cup A_i)=\cup\textrm{clos}(A_i)$ for any subsets $A_i$, $1\leq i \leq n$, in a topological space $X$.  First, the set $\textrm{clos}(\cup A_i)$ is closed and contains $\cup A_i$, thus it is closed and contains $A_i$ for each $i$, thus it contains $\textrm{clos}(A_i)$, thus it contains the union $\cup \textrm{clos}(A_i)$. Next, the set $\cup \textrm{clos}(A_i)$ is closed and contains $A_i$ for each $i$, thus contains $\cup A_i$, thus contains $\textrm{clos}(\cup A_i)$. 
We next demonstrate that $\textrm{clos}(\cap A_i)\subseteq \cap \textrm{clos}(A_i)$.  The set $\cap \textrm{clos}(A_i)$ is closed and contains $\cap A_i$, since $\cap A_i \subseteq \cap \textrm{clos}(A_i)$, so $\cap \textrm{clos}(A_i)$ contains $\textrm{clos}(\cap A_i)$.  We now have the following sequence of set inclusions and equalities.

$$\begin{aligned}
\textrm{clos}(\cup A_i)\smallsetminus \textrm{int}(\cup A_i) &= (\cup \textrm{clos}(A_i)) \smallsetminus \textrm{int}(\cup A_i) \\
&= (\cup \textrm{clos}(A_i))\cap \textrm{int}(\cup A_i)^c \\
&= (\cup \textrm{clos}(A_i))\cap \textrm{clos}(X\smallsetminus (\cup A_i)) \\
&= (\cup \textrm{clos}(A_i))\cap \textrm{clos}(\cap (X\smallsetminus A_i)) \\
&\subseteq (\cup \textrm{clos}(A_i))\cap \bigcap \textrm{clos}(X\smallsetminus A_i) \\
&= (\cup \textrm{clos}(A_i))\cap \bigcap \textrm{int}(A_i)^c \\
&\subseteq  \bigcup (\textrm{clos}(A_i)\smallsetminus \textrm{int}(A_i)).
\end{aligned}
$$)
}
\end{proof}

\begin{lemma}{\label{lem:finite}}
Given a definable subset $A\subseteq \R$, the set of endpoints of $A$ is a finite definable subset of $\R$.

\end{lemma}
\begin{proof}
The set of endpoints of a point $\{a\}$ is the set $\{a\}$, and the set of endpoints of 
an interval $(a,b)$ is the set $\{a,b\}$.  By Lemma \ref{lem:end1} and the fact that $A$ is a finite union of points and intervals, the set of endpoints of $A$ is finite, hence definable by definition.

\end{proof}



The common refinement of a definable subset $A$ of $\R$ (see Definition \ref{def:common} below) is the coarsest cdd of $\R$ adapted to $A$.  
We also provide definitions of other adapted partitions, beginning with the basic boolean algebra generated by an arrangement, which is the coarsest partition of a set $X$ adapted to subsets $A_1,\ldots,A_n$, and whose   definition already appeared in \cite{Basu9}. 

\begin{notation}\textnormal{\cite{Basu9}}{\label{notaBOOLEAN}}
Let $\mathcal{A}=\{A_1,\dots,A_n\}$ be a collection of
subsets of a set $X$.  Let $I\subseteq \{1,\dots,n\}$, allowing
$I=\emptyset$ with the convention that if $I=\emptyset$ then 
the intersection over an empty index
set is $X$, $\cap_{i\in \emptyset} A_i= X$.  Let $\mathcal{A}(I)$ denote the
set $$\left(\bigcap_{i\in I} A_i\right) \setminus \left(\bigcup_{i\notin
  I} A_i\right),$$ and we call each such set a basic $\mathcal{A}$-set.

\end{notation}
\begin{definition}
[Boolean algebra generated by an arrangement \textnormal{\cite{Basu9}}] \label{defnBOOLEAN} 

\; The \; \textnormal{basic
    boolean algebra} of $X$ generated by $\mathcal{A}=\{A_1,\dots,A_n\}$,
  denoted $\mathcal{B}^\ast(A_1,\dots,A_n)$ or just $\mathcal{B}^\ast(\mathcal{A})$, is
  the collection of basic
  $\mathcal{A}$-sets $$\mathcal{B}^\ast(A_1,\dots,A_n)=\{\mathcal{A}(I)| \ I\subset
  \{1,\dots,n\}\}.$$ We use the notation $\mathcal{B}(A_1,\dots,A_n)$ to
  denote the set of connected components of sets in $\mathcal{B}^\ast(A_1,
  \dots,A_n)$ and call the set of connected components the
  \textnormal{boolean algebra} of $X$ generated by $A_1,\dots,
  A_n$.  \end{definition}

We will use the next definition in the proof of the main theorem in an essential way.  We need to have a slightly finer partition than the boolean algebra generated by an arrangement in order to make cylindrical cells; this is related to the separate roles of graphs and bands in the more familiar cylindrical decomposition theorem for semi-algebraic sets. 

\begin{definition}[Common refinement]{\label{def:common}}
Let $A_1,\ldots, A_n \subseteq \R$ be definable sets and define the \textnormal{common refinement}
 of the sets $A,B$ as the union of the sets of endpoints of $A_i$, $1\leq i\leq n$, and the set of maximal open intervals in the complement in $\R$ of these endpoints.  
\end{definition}

\begin{definition}
[Adapted partition] \label{defnADAP}For a partition of a set $X=\sqcup_{\alpha\in
    I} C_\alpha$ into pairwise disjoint sets $C_\alpha$, 
    we say that the partition $\{C_\alpha\}$ is
  \textnormal{adapted} to the sets $S_1,S_2,\dots,S_n$ with $S_i \subseteq
  X$ if for each $i=1,\dots,n$ we have $$\forall \alpha\in I \quad
  C_\alpha \cap S_i = \emptyset \quad
   \text{or} \quad C_\alpha \subseteq
  S_i. $$
\end{definition}

Given a collection of subsets $\mathcal{A}$ of a set $X$, the collection of sets $\mathcal{B}^\ast(A_1,\ldots,A_n)$ is the coarsest partition of $X$ which is adapted to the sets in $\mathcal{A}$.  Additionally, the sets in the common refinement of definable sets $A,B\subseteq \R$ comprise the coarsest partition of $\R$ adapted to $A$ and $B$ into sets which are either points or open intervals.  For example, $\mathcal{B}([0,1))=\{(-\infty,0),[0,1),[1,\infty)\}$ and the common refinement of $[0,1)$ is the partition $\{(-\infty,0),\{0\},(0,1),\{1\},(1,\infty)\}$.  


\subsection{Hardt's Theorem} 

We next state the o-minimal version of Hardt's Theorem, which provides the main technical tool for the proof of the main result of this section.  The version stated here is taken from \cite{Basu9}. 

\begin{definition}[Definably trivial maps]
Let $X\subseteq \R^{n+m}$ and $C\subseteq \R^m$ be definable, and let
$\pi:X \to \R^k$ denote the projection map which forgets the last $m$ coordinates.
We
say that \textnormal{$X$ is definably trivial over $C$} if there exists a definable set $A\subseteq \R^{n}$
and map $h$, called the trivialization of $X$ over $C$, such that
\begin{enumerate}
\item[$(1)$] $h$ is a definable homeomorphism $h:A\times C \to X\cap
\pi^{-1}(C)$, and
\item[$(2)$] $\pi \circ h = \pi$.

\end{enumerate}
If $Y$ is a definable subset of $X$, we say that the trivialization of $X$ over $C$ is compatible with $Y$ if there exists a definable subset $B$ of $A$ satisfying $h(B\times C)=Y\cap \pi^{-1}(C)$.

\end{definition}

\begin{theorem}[Hardt's theorem for definable families
\textnormal{\cite{Basu9}}]\label{thm1}
Let $X\subseteq \R^{k+\ell}$ be a definable set and let $Y_1,\dots,Y_m$
be definable subsets of $X$.  Then, there exists a finite partition of
$R^\ell$ into definable sets $C_1,\dots,C_N$ such that $X$ is
definably trivial over each $C_i$, and moreover the trivialization
over each $C_i$ is compatible with $Y_1,\dots,Y_m$. \end{theorem}

The following two corollaries follow immediately from Hardt's theorem and the proofs are omitted. 

\begin{corollary}{\label{cor2}}
Let $X\subseteq \R^{k+\ell}$ be a definable set.  There exists $N\geq 0$ and a finite family $\{Z_i\}_{i=1}^N$ of definable subsets of $\R^{k+\ell}$ satisfying: for each $y\in \R^\ell$ there exists an $i$, $1\leq i \leq N$, such that $Z_i$ is definably homeomorphic to $X\cap {(\pi^{k+\ell}_{>k})}^{-1} (y)$.

\end{corollary}

\begin{notation}[Definable fiber \textnormal{\cite{Basu9}}]
Let $T\subseteq \R^{k+\ell}$
be a definable set, and let $\pi^{k+\ell}_{\leq  k}:\R^{k+\ell}\to \R^k$ and
$\pi^{k+\ell}_{>k}:\R^{k+d}\to \R^\ell$ be the projection maps onto the
first $k$ and last $\ell$ components of $\R^{k+\ell}$ respectively. For
$y\in \R^\ell$ we set $T_y=\pi^{k+\ell}_{\leq  k}((\pi^{k+\ell}_{>k})^{-1}
(y)\cap T)$ and say that $T_y$ is a \textnormal{definable fiber of
$T$}.\end{notation}

\begin{corollary}{\label{cor:N}}
Given a definable set $T\subseteq \R^{m}$, there exists $N=N(T)\geq 0$ such that for any
$w\in \R^{m-1}$ the number of connected components of $T_w\subseteq \R$ is at most $N/5$.  
In particular, the cardinality of the common refinement of $T_w$ is at most $N$ for all $w\in \R^{m-1}$, 
and 
the cardinality of the common refinement of $T_{w_1},\ldots,T_{w_n}$ is bounded by $n\cdot N$, 
for any $w_1,\ldots,w_n\in \R^{m-1}$.
\end{corollary}


\section{Proof of the main result}

In this section we prove the main result of the chapter, the quantitative semi-cylindrical definable decomposition.  We also provide the proof of the quantitative cylindrical definable decomposition theorem given in \cite{Basu9} for the purpose of introducing notation and many of the same ideas which will be used in the proof of the main result.  Since many of the same ideas and notation will be used in the proof of the main result while the more technical ideas are not present in the cylindrical theorem, we provide the proof given in \cite{Basu9} for the convenience of the reader as a comparison and introduction to the proof of the main result.

Before giving the proof of the quantitative cylindrical definable decomposition, we need two lemmas.  The second lemma is very similar to Lemma 3.11 given in \cite{Basu9}.  
This lemma corresponds to the first projection phase in the more familiar cylindrical algebraic decomposition algorithm for semi-algebraic sets (see, for example \cite{BPRbook2}).

\begin{lemma}\label{lem:combine}
Let $X_1,\ldots,X_n\subset \R^k$ and $\mathcal{N}\subseteq \{1,\ldots,n\}$.  Suppose that, for each $i,j\in \mathcal{N}$, 
there exists a partition $\{B^{i,j}_l\}_{1\leq l\leq M}$ of $\R^{k-1}$ 
such that $X_i\cup X_j$ is definably trivial over $B^{i,j}_l$, $1\leq l\leq M$, and such that the trivialization is compatible with $X_i,X_j$.  Then, 
$\R^k$ is definably trivial over $A$ for every $A$ in the boolean algebra $\mathcal{B}(B^{i,j}_l|
i,j\in \mathcal{N}, 1\leq l\leq M)$, and the trivialization is compatible with 
$X_i$, $i\in \mathcal{N}$. 
\end{lemma}

\begin{proof}
Let $N_i$ satisfy the conclusion of Corollary \ref{cor:N} so that the number of sets in the common refinement of $(X_i)_{x'}$ is at most $N$ for any $x'\in \R^{k-1}$, $1\leq i\leq n$, and let $N=\max_{1\leq i \leq n} N_i$.   
\hide{
Let $A$ be a set in the boolean algebra $\mathcal{B}(B^{i,j}_l|1\leq i,j\leq n, 1\leq l \leq M)$. Since $B^{i,j}_l\cap B^{i,j}_{l'}=\emptyset$ if $l\neq l'$ for each pair $(i,j)$ by assumption, the set $A\in \mathcal{B}(B^{i,j}_l|1\leq l\leq M, 1\leq i,j\leq n)$ is a connected component of a set of the form  
$$\left(\bigcap_{(i,j,l)\in I} B^{i,j}_l\right) $$
for a subset $I$ of $\{1,\ldots,n\}^2\times \{1,\ldots,M\}$ with the property the elements of $I$ are exactly tuples of the form $(i,j,f(i,j))$, $1\leq i,j\leq n$, for some well-defined map $f:\{1,\ldots,n\}^2 \to \{1,\ldots,M\}$.  
}

Let $\mathcal{N}\subseteq \{1,\ldots,n\}$ and $A$ be a set in the boolean algebra $\mathcal{B}(B^{i,j}_l|
i,j\in \mathcal{N}, 1\leq l \leq M)$. Since $B^{i,j}_l\cap B^{i,j}_{l'}=\emptyset$ if $l\neq l'$ for each pair $(i,j)$ by the assumption that $\{B^{i,j}_l\}_{1\leq l\leq M}$ is a partition, the set $A\in \mathcal{B}(B^{i,j}_l|1\leq l\leq M, 1\leq i,j\leq n)$ is a connected component of a set of the form  
$
\bigcap_{(i,j,l)\in I} B^{i,j}_l 
$ 
for a subset $I$ of $
\mathcal{N}^2 \times \{1,\ldots,M\}$ with the property the elements of $I$ are exactly tuples of the form $(i,j,f(i,j))$, 
$i,j\in \mathcal{N}$, for some well-defined map $f:
\mathcal{N}^2 \to \{1,\ldots,M\}$.  
In particular, $X_i \cup X_j$ is definably trivial over $A$, since $A$ is a subset of $\cap_{(i,j,l)\in I} B^{i,j}_l$ and $X_i\cup X_j$ is definably trivial over each $B^{i,j}_l$.  

Since $X_i\cup X_j$ is definably trivial over $B^{i,j}_l$, for each pair $(i,j)$, 
$i,j\in \mathcal{N}$, 
there exists a definable subset $C_{i,j}$ of $\R$ such that the following diagram commutes.  
$$\xymatrix{
C_{i,j}\times B^{i,j}_{f(i,j)} \ar[r]^-{\phi^{i,j}}_-\approx \ar[d]^\pi & \pi^{-1}(B^{i,j}_{f(i,j)})\cap \left(X_i \cup X_j\right) \ar[ld]^\pi \\
B^{i,j}_{f(i,j)} & }
$$
Furthermore, since this trivialization is compatible with $X_i,X_j$, there exists definable subsets $D_{i,j},E_{i,j}$ of $C_{i,j}$ such that the following diagrams commute.
$$\xymatrix{
D_{i,j}\times B^{i,j}_{f(i,j)} \ar[r]^-{{\phi^{i,j}}}_-{\approx} \ar[d]^\pi & \pi^{-1}(B^{i,j}_{f(i,j)})\cap X_i \ar[ld]^\pi \\
B^{i,j}_{f(i,j)} & }
$$
$$\xymatrix{
E_{i,j}\times B^{i,j}_{f(i,j)} \ar[r]^{\phi^{i,j}}_-\approx \ar[d]^\pi & \pi^{-1}(B^{i,j}_{f(i,j)})\cap X_j \ar[ld]^\pi \\
B^{i,j}_{f(i,j)} & }
$$
The definable subsets $(X_i)_{x},(X_j)_{x}$ of $\R$, $x\in \R^{k-1}$, induce a partition
\begin{equation*}\label{eq:V}
V_1^{i,j}(x),\ldots,V_{2N}^{i,j}(x)
\end{equation*}
of $\R$ into the common refinement of $(X_i)_{x},(X_j)_{x}$, where the set $V_{2m+1}^{i,j}(x)$ is a maximal open interval contained in one of 
$$
\begin{aligned}
(X_i)_{x}\cap (X_j)_{x}&,(X_i)_{x}\smallsetminus (X_j)_{x} \\ 
(X_j)_{x}\smallsetminus (X_i)_{x} &, \R\smallsetminus \left((X_i)_{x}\cup (X_j)_{x}\right),
\end{aligned}
$$ 
and $V_{2m}^{i,j}(x)$ is the left endpoint of the interval $V_{2m+1}^{i,j}(x)$, $m\geq 0$ and $x\in \R^{k-1}$, with the understanding that $V_h^{i,j}(x)$ can be empty for all $h\geq h_0$, for some $0\leq h_0\leq 2N$. 
Clearly, the sets $V_h^{i,j}(x)$ are definable and depend only on $X_i,X_j$ and on $h$. 

\hide{
Let $\mathcal{N}\subseteq \{1,\ldots,n\}$ and $A$ be a set in the boolean algebra $\mathcal{B}(B^{i,j}_l|
i,j\in \mathcal{N}, 1\leq l \leq M)$. Since $B^{i,j}_l\cap B^{i,j}_{l'}=\emptyset$ if $l\neq l'$ for each pair $(i,j)$ by the assumption that $\{B^{i,j}_l\}_{1\leq l\leq M}$ is a partition, the set $A\in \mathcal{B}(B^{i,j}_l|1\leq l\leq M, 1\leq i,j\leq n)$ is a connected component of a set of the form  
$$\left(\bigcap_{(i,j,l)\in I} B^{i,j}_l\right) $$
for a subset $I$ of $
\mathcal{N}^2 \times \{1,\ldots,M\}$ with the property the elements of $I$ are exactly tuples of the form $(i,j,f(i,j))$, 
$i,j\in \mathcal{N}$, for some well-defined map $f:
\mathcal{N}^2 \to \{1,\ldots,M\}$.  
}
So we have seen that $X_i\cup X_j$ is definably trivial over $A$ for every pair $(i,j)\in \mathcal{N}^2$, and we claim that 
\begin{equation}{\label{eq:VV}}
\phi^{i,j}(C\times B^{i,j}_{f(i,j)})=\{(a,x)| a\in V_h^{i,j}(x), x\in A\} 
\end{equation}
 for some $C$ in the common refinement of $D_{i,j},E_{i,j}$, which suffices to prove the lemma. 

Fix $h,i,j$, $1\leq h\leq M$ and 
$i,j\in \mathcal{N}$.  Since the set $X_i\cup X_j$ is definably trivial over $A$, by definition the set $V_{h}^{i,j}(x)$ is either an interval or a point for all $x\in A$.  If $V_h^{i,j}(x)$ is an interval for $x\in A$,  let $x_0\in A$. 
It is clear that $(\phi^{i,j})^{-1}(V_{h-1}^{i,j}(x_0)),(\phi^{i,j})^{-1}(V_{h+1}^{i,j}(x_0))$ are endpoints of of some interval $C$ in the common refinement of $D_{i,j},E_{i,j}$, and that $V_h^{i,j}(x_0)=\phi^{i,j}(C\times \{x_0\})$.  
If $V_h^{i,j}(x)$ is a point for all $x\in A$ and $x_0\in A$, then $(\phi^{i,j})^{-1}(V_h^{i,j}(x_0))$ is a point $C$ in the common refinement of $D_{i,j},E_{i,j}$, and $V_h^{i,j}(x_0)=\phi^{i,j}(C\times \{x_0\})$.  

\end{proof}


\begin{lemma}[Cylindrical extension] \label{lem:extension}
Let $T\subseteq \R^{k+\ell}$ be a definable set, $k>1$.  Then there exists a definable partition $A_1,\ldots,A_M$ of $\R^{k-1+2\ell}$ satisfying the following.  For each $l$, $1\leq l \leq M$, and $y_1,y_2\in \R^\ell$, let $$B_l(y_1,y_2)=\pi^{k-1+2\ell}_{\leq k-1}\left({(\pi^{k-1+2\ell}_{>k-1})}^{-1}(y_1,y_2)\cap A_l\right).$$  The projection $\pi^k_{>1}$ restricted to the sets $T_{y_1}\cup T_{y_2}$ is definably trivial over $B_l(y_1,y_2)$, $1\leq l \leq M$, and the trivialization is compatible with $T_{y_1}$ and $T_{y_2}$.  
In particular, for any $y_1,\ldots,y_n\in \R^\ell$, any subset $\mathcal{N}\subseteq \{1,\ldots,n\}$, and any $A$ in the boolean algebra $\mathcal{B}:=\mathcal{B}(B_l(y_i,y_j)|
i,j\in \mathcal{N}, 1\leq l\leq M)$ 
there exists a cdd $\mathcal{A}_A$ of $\R$ and definable homeomorphisms $\phi_{C,A}:C\times A \to Y_{C,A}$ where $\{Y_{C,A}\}_{C\in \mathcal{A}_A, A\in \mathcal{B}}$ is a definable partition of $\R^k$ adapted to 
$T_{y_i}$, $i\in \mathcal{N}$.  
\end{lemma}

\begin{proof} 

We follow the proof of Lemma 3.11 in \cite{Basu9}.  

Let $T\subseteq \R^{k+\ell}$ be a definable set and define
$$\begin{aligned}
V_1&=\{(x,y_1,y_2)\in \R^k \times \R^\ell \times \R^\ell| \; (x,y_1)\in T\}\\
V_2&=\{(x,y_1,y_2)\in \R^k \times \R^\ell \times \R^\ell| \; (x,y_2)\in T\}.
\end{aligned}$$
Applying the Hardt Triviality Theorem to $(\R^{k+2\ell},V_1,V_2)$ and the projection map $\pi:=\pi^{k+2\ell}_{>1}$ we obtain a finite definable partition $A_1,\ldots,A_M$ of $\R^{k-1+2\ell}$ such that for each $A_i$, $1\leq i \leq M$, there exists a definable homeomorphism $h:\R\times A_i \to \pi^{-1}(A_i)$ such that the following diagram commutes,
$$
\xymatrix{
\R\times A_i \ar[r]^h \ar[d]^\pi & \pi^{-1}(A_i) \ar[ld]^\pi \\
A_i & }
$$
and such that the trivialization $h$ is compatible with $V_1,V_2$ (note that this last assumption means that $h$ is not typically the identity map, but we can take $h$ to be the identity map whenever $\pi^{-1}(A_i)\cap (V_1\cup V_2)=\emptyset$).  In particular, for each $i$, $1\leq i\leq M$, there exists subsets $B_1,B_2$ of $\R$ such that the following diagram commutes for each $j$, $1\leq j \leq 2$.
$$
\xymatrix{
B_j\times A_i \ar[r]^-{h|_{B_j\times A_i}} \ar[d]^\pi & \pi^{-1}(A_i) \cap V_j \ar[ld]^\pi \\
A_i & }
$$
Let $\mathcal{A}_i$ be the common refinement of $B_1,B_2\subseteq \R$ and $C\in \mathcal{A}_i$, and let $X_{C,i}=h(C\times A_i)$ \textsl{(we may want to say that if $C$ is a point then $X_{C,i}$ is a \textit{graph over $A_i$} and if $C$ is an interval then $X_{C,i}$ is a \textit{band over $A_i$} here)}.  Then, the collection $\{X_{C,i}\}_{C\in \mathcal{A}_i,1\leq i \leq M}$ is a finite definable partition of $\R^{k+2\ell}$ which is adapted to $V_1,V_2$.

Now let $y_1,y_2\in \R^\ell$.  Note that $V_1=T\times \R^\ell$ and similarly for $V_2$ after rearrangement of the last $2\ell$ components.  In particular, $T_{y_1}\cup T_{y_2}$ is definably trivial over $B_i(y_1,y_2)$ and this trivialization is compatible with $T_{y_1},T_{y_2}$ using the trivialization $h$.  More specifically, when one fixes the last $2\ell$ components to be $(y_1,y_2)$ in $\R^{k+2\ell}$, then the definable fibers $T_{y_1}=(V_1)_{(y_1,y_2)},T_{y_2}=(V_2)_{(y_1,y_2)}$ are definably trivial over the sets $B_i(y_1,y_2)=(A_i)_{(y_1,y_2)}$, $1\leq i\leq M$, where the trivialization is given by restricting the homeomorphism $h$ to $\R^k$. This proves the first part of the lemma. 
The second part of the lemma now follows immediately from Lemma \ref{lem:combine}. 

\end{proof}

\begin{proof}[Proof of Theorem \ref{thm:cdd}]
The proof is by induction.  We first prove the theorem in the case $k=1$.  Let $T\subseteq \R^{1+\ell}$ be a definable set and $N\geq 0$ as in Corollary \ref{cor:N}.  We follow the proof of Lemma \ref{lem:extension}.  
Define $$\begin{aligned}
V_1&=\{(x,y_1,y_2)\in \R^{1+2\ell}|\; x\in T_{y_1}\}, \\
V_2&=\{(x,y_1,y_2)\in \R^{1+2\ell}|\; x\in T_{y_2}\}.
\end{aligned}
$$
Applying Hardt triviality theorem to $\R^{1+2\ell}$ and the map $\pi=\pi^{1+2\ell}_{>1}$ 
we obtain a partition of $\R^{2\ell}$ into definable sets $A_1,\ldots,A_M$  and trivializations $h:\R\times A_i \to \pi^{-1} (A_i)$ such that these trivializations are compatible with $V_1,V_2$, $1\leq i \leq M$. In particular, for each $i$, $1\leq i \leq M$, 
there exist definable sets $B_1,B_2$ such that $h|_{B_j\times A_i}:B_j\times A_i \to \pi^{-1}(A_i)\cap V_j$ are definable homeomorphisms satisfying $\pi \circ h|_{B_j\times A_i} = \pi$, $1\leq j\leq 2$.  
Let $\mathcal{A}_i$ be the common refinement of $B_1,B_2$, and we denote for $C\in \mathcal{A}_i$ the image $X_{C,i}:=h(C\times A_i)$, and call $X_{C,i}$ a \textit{graph} if $C$ is a point and a \textit{band} if $C$ is an interval.  Let $\mathcal{A}=\{X_{C,i}|\; C\in \mathcal{A}_i, 1\leq i \leq M\}$.  Then $|\mathcal{A}|$ is bounded by a constant depending only on $T$ by Corollary \ref{cor:N}.  
Let $y_1,\ldots,y_n\in \R^\ell$ so that $T_{y_1},\ldots,T_{y_n}$ are definable subsets of $\R$.  We show that some subcollection of the sets \begin{equation}{\label{eq:case1}}\pi^{1+2\ell}_{\leq 1}((\pi^{1+2\ell}_{>1})^{-1}(y_i,y_j)\cap A),\end{equation} $A\in \mathcal{A}$ and $1\leq i,j\leq n$, form a cdd of $\R$ adapted to $T_{y_1},\ldots,T_{y_n}$, in fact that a subcollection of this form is the common refinement of $T_{y_1},\ldots,T_{y_n}$.  Let $W\subseteq \R$ be a set of the common refinement of $T_{y_1},\ldots,T_{y_n}$. If $W$ is a point assume without loss of generality that $W$ is an endpoint of $T_{y_1}$, then it is easy to see that $W=(X_{C,i})_{(y_1,y_1)}$ where $y_1$ is in the partitioning set $A_i$, for some $i$, $1\leq i \leq M$, and for some endpoint $C$ of $\mathcal{A}_i$.  If $W$ is an interval, $W=(a_1,a_2)$, then $a_1,a_2$ are endpoints of $T_{y_i},T_{y_j}$ respectively, for some $i,j$, $1\leq i,j\leq n$, and in this case 
it is easy to see that 
$W=(X_{C,l})_{(y_i,y_j)}$ for some interval $C$ of $ \mathcal{A}_l$, where $y_i\in A_l$.  So, we have shown that some subcollection of \ref{eq:case1} is the common refinement of $T_{y_1},\ldots,T_{y_n}$, which has at most $nN$ sets by Corollary \ref{cor:N}.  This finishes the case $k=1$.

\hide{
Note that 
$$\begin{aligned}(A_1\smallsetminus A_2)_{(y_i,y_j)}&=T_{y_i}\smallsetminus T_{y_j}, \\
(A_2\smallsetminus A_1)_{(y_i,y_j)}&=T_{y_j}\smallsetminus T_{y_i}, \\
(A_1\cap A_2)_{(y_i,y_j)}&=T_{y_i}\cap T_{y_j}, \\
(\R^{1+2\ell}\smallsetminus (A_1 \cup A_2))_{(y_i,y_j)}&=\R\smallsetminus (T_{y_i}\cup T_{y_j}).
\end{aligned}$$
The common refinement of $T_{y_1},\ldots,T_{y_n}$ is adapted to the sets in $\mathcal{B}(A_{(y_i,y_j)}|A\in \mathcal{A}, 1\leq i,j \leq n)$, and the common refinement is a cdd of $\R$.  Finally, the number of sets in the common refinement is at most $Nn$.  This finishes the case where $k=1$.
}

Now assume $k>1$ and by induction hypothesis that the theorem holds for smaller values of $k$.  Let $T\subseteq \R^{k+\ell}$ be definable, $N\geq 0$ as in Corollary \ref{cor:N}.  
Let $A_1,\ldots,A_M\subseteq \R^{k-1+2\ell}$ satisfy the conclusion of Lemma \ref{lem:extension} so that,  for every $y_1,y_2\in \R^\ell$, $T_{y_1}\cup T_{y_2}$ is definably trivial over $(A_i)_{(y_1,y_2)}$ and compatible with $T_{y_1}$ and $T_{y_2}$ for each $A_i$.  Let $T'=\cup_{1\leq i\leq M} A_i \times \{e_i\}$, where $e_i$ is the $i$-th standard basis vector of $\R^M$.  Note that $T'\subseteq \R^{k-1+2\ell+M}$.  

Applying the induction hypothesis to $T'$ 
we obtain definable subsets $\{A'_\beta \}_{\beta \in I'}$ of $\R^{k-1+2(2^{k-1}-1)(2\ell+M)}$, depending only on $T$ and
having the property that, for any $y_1,\ldots,y_n\in \R^{\ell}$ and $\textrm{a}=(a_1,\ldots,a_{2(2^{k-1}-1)})$ with each $a_i$ a standard basis vector of $\R^M$,  some sub-collection of the sets 
\begin{equation}{\label{eq:case2}}
\pi^{k-1+2(2^{k-1}-1)(2\ell+M)}_{\leq k-1}\left((\pi^{k-1+2(2^{k-1}-1)(2\ell+M)}_{>k-1})^{-1}(y_{i_1},\ldots,y_{i_{2^2(2^{k-1}-1)}},\textrm{a}) \; \cap \; A'_\beta\right),
\end{equation} $\beta \in I'$ and $1\leq i_1,\ldots,i_{2^2(2^{k-1}-1)}\leq n$, form a cdd of $\R^{k-1}$ adapted to the family $\{T'_{(y_i,y_j,a)}|\; 1\leq i,j\leq n,\; a \in \R^M \text{ standard basis vector}\}$.  Note that $T'_{(y_i,y_j,a)}$ is nothing more than $B_l(y_i,y_j)$ where $a$ has a $1$ in the $l$-th spot (cf. Lemma \ref{lem:extension}). 

Let $y_1,\ldots,y_n\in \R^\ell$.  Some subcollection of \ref{eq:case2} is a cdd of $\R^{k-1}$ adapted to $B_l(y_i,y_j)$, $1\leq i,j\leq n$, $1\leq l\leq M$ by induction hypothesis.   
Denote this cdd by $\{C_\alpha\}_{\alpha \in I}$ and note that $|I|\leq C_2'n^{2^{k}-2}$ for some constant $C'_2$ depending only on $T$ using the induction hypothesis and the fact that the cardinality of the set $\{B_{l}(y_i,y_j)|1\leq l\leq M, 1\leq i,j\leq n\}$ is at most $Mn^2$.  Since the cdd is adapted to $B_l(y_i,y_j)$, for each $\alpha \in I$ we have 
$C_\alpha \subseteq A$ or $C_\alpha \cap A=\emptyset$ for all $A\in \mathcal{B}(B_l(y_i,y_j)|1\leq l\leq M, 1\leq i,j\leq n)$.  
Recall the trivializations 
$\phi_{C,A}:C\times A\to Y_{C,A}$, for $C\in \mathcal{A}_A$ (cf. Lemma \ref{lem:extension}).
Denote the image of the restriction to $C\times C_\alpha$ of the trivialization $\phi_{C,A}$ by $X_{C,\alpha}$, where $C_\alpha \subseteq A$. It now follows from Lemma \ref{lem:extension} that $\{X_{C,\alpha}\}_{C\in \mathcal{A}_A, \alpha \in I}$ is a cdd of $\R^k$ adapted to $T_{y_1},\ldots,T_{y_n}$.  
Note that the sets $X_{C,\alpha}$ have the desired form (using Equation \ref{eq:VV})
\begin{equation}{\label{eq:sets}}
\{(a,x)\in \R^k| 
\; a\in \phi_{C,A}(C\times \{x\}) \wedge (x,y_{i_1},\ldots,y_{i_{2^2(2^{k-1}-1)}},\textrm{a})\in A'_\beta
\}, 
\end{equation}
for some $1\leq i_1,\ldots,i_{2^2(2^{k-1}-1)}\leq n$, where $x\in C_\alpha\subseteq A$, $\beta \in I'$ and $\textrm{a}=(a_1,\ldots,a_{2(2^{k-1}-1)})$, with each $a_i$ a standard basis vector of $\R^M$.  
Finally, the cardinality of 
$\{X_{C,\alpha}|\; C\in \mathcal{A}_A, \alpha \in I\}$   
is at most $|\mathcal{A}_A|\cdot |I| \leq C_2 n^{2^k-1}$, since $|\mathcal{A}_A|\leq Nn$ for each $A\in \mathcal{B}(B_l(y_i,y_j)|1\leq l\leq M, 1\leq i,j\leq n)$ by Corollary \ref{cor:N}, which finishes the proof. 





\end{proof}

The proof of the main theorem shares many of the same ideas as those we have just seen in the proof of the cylindrical definable decomposition theorem.  We give the main ideas of the proof now.  

The proof is by induction.  The case $k=1$ is identical to the cylindrical case.  
For $k>1$, we begin by using Lemma \ref{lem:extension} (Cylindrical Extension) to obtain a set $T'$ which has the property that certain definable fibers of $T'$ are the sets $B_l(y_i,y_j)$, over which $T_{y_i}\cup T_{y_j}$ is trivial and such that the trivialization is compatible with $T_{y_i},T_{y_j}$.  
We will use the induction hypothesis $n^2$ times, once for each pair $(i,j)$, on the collection $\{B_l(y_q,y_p)|\; 1\leq l\leq M, q\in \{i,j\}, 1\leq p \leq n \}$ which has only $O(n)$ sets,  
to find a scdd $\mathcal{A}$ of $\R^{k-1}$ into about $O(n^{2k-3})$ sets which have the property that the sets in $\mathcal{A}$ can be cylindrically extended to $\R^k$ using Lemma \ref{lem:extension} (Cylindrical Extension) to form a partition adapted to $T_{y_i},T_{y_j}$. 
 We make 
$n^2$ such partitions of $\R^k$ for each pair $(i,j)$, $1\leq i, j \leq n$, and call the partition associated to the pair $i,j$ the $(i,j)$-partition.  
Note that the total number of sets among all $(i,j)$-partitions is $O(n^{2k-1})$ since each of the $n^2$ partitions had about $O(n^{2k-3})$ sets.  
Now all the sets in these $(i,j)$-partitions are not necessarily adapted to $T_{y_p}$, $p\neq i,j$, and furthermore the sets of different partitions may overlap. 
We prove that it is possible to select out of all the sets in all the $(i,j)$-partitions a sub-collection of sets which form a partition of $\R^k$ which is adapted to $T_{y_1},\ldots,T_{y_p}$, obtaining the desired scdd of $\R^k$.  Since this selection procedure is the main difference between the cylindrical and semi-cylindrical decompositions, we next describe the selection procedure in some detail.  

Given $x\in \R^k$, we prove that it is possible to select a unique set $X=X(x)$ among all the $(i,j)$-partitions which contains $x$ and which has the property that for any $x'\in X$ we have $X(x')=X$, and furthermore so that $X\cap T_{y_p}=\emptyset$ or $X\subseteq T_{y_p}$ for all $p$, $1\leq p \leq n$.  Thus, the sets which are selected in this way form a partition of $\R^k$ adapted to $T_{y_1},\ldots,T_{y_n}$.  The selection procedure is as follows.  Let $\pi=\pi^k_{>1}$ and $\pi_2=\pi^k_{\leq 1}$.  For each $x\in \R^k$, considering the common refinement of the one-dimensional sets $T_{(\pi(x),y_1)},\ldots,T_{(\pi(x),y_n)}$, we have two cases to consider: either $\pi_2(x)$ is an endpoint of the common refinement or it belongs to one of the intervals of the common refinement.  If $\pi_2(x)$ corresponds to an endpoint, then $x$ belongs to a graph $X$ of some $(i,j)$-partition, and the set of indices of the sets $T_{(\pi(x),y_1)},\dots,T_{(\pi(x),y_n)}$ for which $x$ is an endpoint is invariant for any other $x'$ in the set $X$.  If $\pi_2(x)$ belongs to an interval, then $x$ belongs to a band $X$ in each $(i,j)$-partition, and if $i,j$ are among indices of $T_{(\pi(x),y_1)},\dots,T_{(\pi(x),y_n)}$ corresponding to the endpoints of the interval containing $x$, then this set $X$ is adapted to $T_{y_1},\ldots,T_{y_n}$ and furthermore for any choice $x'$ in $X$ the same indices $i,j$ correspond to the endpoints of the interval containing $x'$ in the common refinement of $T_{(\pi(x'),y_1)},\ldots,T_{(\pi(x'),y_n)}$.  By taking the minimum of the indices which satisfy the corresponding property in each case we obtain the unique $X$ which will be selected to be included in the final scdd.
We now provide the proof of the main result of the chapter.




\begin{proof}[Proof of Theorem \ref{thm:scdd}]
The proof is by induction on $k$.  The case $k=1$ is identical to the cylindrical theorem and is not repeated here.

Let $k>1$ and suppose by induction hypothesis that the claim holds for $k-1$.  	As in the proof for the cylindrical theorem above, let $T\subseteq \R^{k+\ell}$ be definable and $N\geq 0$ as in Corollary \ref{cor:N}.  
Let $A_1,\ldots,A_M\subseteq \R^{k-1+2\ell}$ satisfy the conclusion of Lemma \ref{lem:extension} so that,  for every $y_1,y_2\in \R^\ell$, $T_{y_1}\cup T_{y_2}$ is definably trivial over $(A_i)_{(y_1,y_2)}$ and compatible with $T_{y_1}$ and $T_{y_2}$ for each $A_i$.  Let $T'=\cup_{1\leq i\leq M} A_i \times \{e_i\}$, where $e_i$ is the $i$-th standard basis vector of $\R^M$.  Note that $T'\subseteq \R^{k-1+2\ell+M}$.  

Fix $y_1,\ldots,y_n\in \R^\ell$. We apply the induction hypothesis to the set $T'$ to obtain definable subsets $\{A'_{i,j,\alpha}\}_{\alpha \in I_{i,j}}$ of $\R^{k-1+2(2^k-1)(2\ell +M)}$, depending only on $T$ and having the property that
some sub-collection of the sets 
$$\pi^{k-1+2(2^{k-1}-1)(2\ell+M)}_{\leq k-1}\left((\pi^{k-1+2(2^{k-1}-1)(2\ell+M)}_{>k-1})^{-1}(y_{i_1},\ldots,y_{i_{2^2(2^{k-1}-1)}},\textrm{a}) \; \cap \; A'_{\beta}\right),
$$
$\beta \in I'_{i,j}$ and $1\leq i_1,\ldots,i_{2^2(2^{k-1}-1)}\leq n$ and $\textrm{a}=(a_1,\ldots,a_{2(2^{k-1}-1)})$ with each $a_i$ a standard basis vector of $\R^M$, form a scdd of $\R^{k-1}$ into definable sets which are adapted to the family $\{T'_{(y_q,y_p,a)}|\; q\in\{i,j\}, 1\leq p\leq n, a \in \R^M \text{ standard basis vector}\}$, and by induction hypothesis we have that the number of sets in the scdd is bounded by 
$C_2'n^{2(k-1)-1}$ 
for some $C_2'$ depending only on $T$. Denote this scdd by 
\begin{equation}{\label{eq:scdd}}
\{C^{i,j}_\alpha\}_{\alpha \in I}.
\end{equation}  Note that $T'_{(y_i,y_j,a)}$ is nothing more than $B_l(y_i,y_j)$ where $a$ has a $1$ in the $l$-th spot (cf. Lemma \ref{lem:extension}).  In particular, for each pair 
$(i,j)$, $1\leq i,j\leq n$, recalling 
the homeomorphisms $\phi_{C,A}^{i,j}:C\times A\to Y_{C,A}$, for $C\in \mathcal{A}_A$ and $A$ in the boolean algebra $\mathcal{B}^{i,j}:=\mathcal{B}(B_l(y_q,y_p)|q\in \{i,j\},1\leq p\leq n\}$ (cf. Lemma \ref{lem:extension}), and denoting the image of the restriction to $C\times C^{i,j}_\alpha$ of these trivializations by $X^{i,j}_{C,\alpha}$, recall that $\{X^{i,j}_{C,\alpha}\}_{C\in \mathcal{A}_A, 
A\in \mathcal{B}^{i,j}, \alpha\in I}$ is a partition of $\R^k$ adapted to $T_{y_i},T_{y_j}$.  
For convenience we refer to this partition as the $(i,j)$-partition 
and denote 
\begin{equation}{\label{eq:B}}
\mathcal{B}:=\bigcup_{1\leq i,j\leq n} \mathcal{B}^{i,j}
.  
\end{equation}
Note that the sets $X^{i,j}_{C,\alpha}$ have the desired form in Equation \ref{eq:sets} (using Equation \ref{eq:VV}),
\hide{
\begin{equation}{\label{eq:sets}}
\{(a,x)\in \R^k| 
 a\in \phi_{C,l}^{(i,j)}(C\times \{x\}) \wedge (x,y_{i_1},\ldots,y_{i_{2^2(2^{k-1}-1)}},\textrm{a})\in A'_{i,j,\alpha}
\}, 
\end{equation}
}
for some $1\leq i_1,\ldots,i_{2^2(2^{k-1}-1)}\leq n$, where $x\in C_\alpha$, $\beta \in I'$ and $\textrm{a}=(a_1,\ldots,a_{2(2^{k-1}-1)})$, with each $a_i$ a standard basis vector of $\R^M$.  
\hide{
using Lemma \ref{lem:extension} we extend the partition of $\R^{k-1}$ adapted to $B_l(y_i,y_j)$ to a partition of $\R^k$ adapted to $T_{y_i},T_{y_j}$. For convenience we will refer to this partition as the $(i,j)$-partition and denote it by 
$\{X_{C,i,j,l\alpha}|\; C\in \mathcal{A}_l, 1\leq i,j\leq n, 1\leq l \leq M, \alpha \in I\}$.
 and note each set has the form \ref{eq:sets}.  
}
Since by Corollary \ref{cor:N} the number of sets in the common refinement of $T_{y_i},T_{y_j}$ is at most $2N$, note that each $(i,j)$-partition consists of at most
$2NC_2' n^{2(k-1)-1}$ sets for some constant $C'_2$ depending only on $T$. 




\hide{
We now collect among each $(i,j)$-partition only those sets which are not crossed by any $T_{y_p}$, $1\leq p\leq n$.  Specifically, for each pair $(i,j)$ and each $(i,j)$-partition, 
$\{X^{i,j}_{C,\alpha}\}_{C\in \mathcal{A}_A, 
A\in \mathcal{B}^{i,j}, \alpha\in I}$, we exclude a set $X=X^{i,j}_{C,\alpha}$ from our collection if it is not adapted to $T_{y_p}$ for some $p$, $1\leq p \leq n$.  So the set $X$ belongs to the collection if it satisfies the inclusion criterion \begin{equation}{\label{eq:inclusion}}(\forall p, 1\leq p \leq n)(X\subseteq T_{y_p} \; \text{or} \; X\cap T_{y_p}=\emptyset ).\end{equation}  The resulting collection need not be a partition of $\R^k$ since the union of these sets may not be all of $\R^k$, but we have shown that all the sets in this collection are adapted to $T_{y_1},\ldots,T_{y_n}$, and recall that the number of sets satisfying the inclusion criterion in each $(i,j)$-partition is at most $2NC'_2n^{2(k-1)}$. 
We will refer to the collection of sets in the $(i,j)$-partition which satisfy the inclusion criterion as a K-decomposition.
}

We now select from each $(i,j)$-partition the sets which will comprise the semi-cylindrical decomposition.  In order to describe the selection process, we need the following definition and lemma.  We define a map from $\R^k$ to the collection of sets among all $(i,j)$-partitions which associates to each point $x\in \R^k$ a unique set in some $(i,j)$-partition which contains it.  We first define maps which pick out the indices of the sets $T_{y_1},\ldots,T_{y_n}$ which are closest vertically to the point $x$ and then use these maps to define the selection criterion. 

Let $x\in \R^k$ 
and let $\{a_1,\ldots,a_p\}\subseteq \R$ be the finite set of endpoints among the sets $T_{(\pi(x),y_1)},\dots,T_{(\pi(x),y_n)}$, and suppose $a_j<a_{j+1}$, $0\leq j\leq p$ with the convention that $a_0=-\infty$ and $a_{p+1}=+\infty$.  Let $\mathcal{I}:\{a_1,\ldots,a_p\}\to \mathcal{P}(\{1,\ldots,n\})$ be the map which associates to each $a_j$, $1\leq j\leq p$, the set of indices $\{i| \;  a_j \text{ is an endpoint of } T_{(\pi_2(x),y_i)}\}$.  Let $\pi_2:\R^{k}\to \R$ be the projection onto the first coordinate.  If $\pi_2(x)=a_j$ for some $j=1,\ldots,n$, define $\xi(x)=\mathcal{I}(a_j)$ and $\mu(x)=\nu(x)=\emptyset$, and otherwise if $a_j<\pi_2(x)<a_{j+1}$ for some $0\leq j\leq p$, 
 define $\xi(x)=\emptyset$, $\mu(x)=\mathcal{I}(a_j)$ and $\nu(x)=\mathcal{I}(a_{j+1})$ with the convention that $\mathcal{I}(a_0)=\mathcal{I}(a_{p+1})=\{n\}$.  Then we have the following lemma.
 
\begin{lemma}{\label{lem:well}}
Let $x\in \R^k$.  If $\xi(x)\neq \emptyset$ let $i_0=\min \xi(x)$ and let $X$ be the unique set in the $(i_0,i_0)$-partition which contains $x$, then the set $\xi(x)$ is invariant for each $x\in X$.  If $\xi(x)=\emptyset$ let $i_0=\min(\min \mu(x), \min \nu(x))$ and $j_0=\max(\min\mu(x),\min \nu(x))$ and let $X$ be the unique set in the $(i_0,j_0)$-partition which contains $x$, then the sets $\mu(x),\nu(x)$ are invariant for each $x\in X$.   

\end{lemma}

\begin{proof}
Suppose $\xi(x)\neq \emptyset$ and $X$ as above.  
We show that the map $\xi(x)$ is constant for all $x\in X$.  
The proof to show that the maps $\mu(x)$ and $\nu(x)$ are constant for all $x\in X$ is nearly identical and is not repeated.  

By definition, $T_{y_{i_0}}\cup T_{y_p}$ is definably trivial over $\pi(X)$, and this trivialization is compatible with $T_{y_{i_0}},T_{y_p}$ for all $1\leq p\leq n$, since $\pi(X)\subseteq B_l(y_{i_0},y_p)$ for some $l$, $1\leq l\leq M$.  
Suppose, seeking a contradiction, that there exists $x,x'\in X$ such that $\xi(x)\neq \xi(x')$, so there exists an index $p\in (\xi(x)\cup \xi(x'))\smallsetminus (\xi(x)\cap \xi(x'))$.  Since $X$ is path connected, there exists a path in $X$ from $x$ to $x'$, that is, a continuous map $\Gamma:[0,1]\to X$ such that $\Gamma(0)=x$ and $\Gamma(1)=x'$.  
But $T_{y_{i_0}}\cup T_{y_p}$ is not definably trivial over $\pi(\Gamma([0,1]))$, and $\pi(\Gamma([0,1]))$ is contained in $\pi(X)$, a contradiction.  


\end{proof}
\hide{
We next show that the map $\mu(x)$ is constant for all $x\in X_\gamma$. 
Suppose, seeking a contradiction, that there exists $x,x'\in X_\gamma$ such that $\xi(x)\neq \xi(x')$, so there exists an index $i_0\in (\xi(x)\cup \xi(x'))\smallsetminus (\xi(x)\cap \xi(x'))$.  Since $X_\gamma$ is path connected, there exists a path in $X_\gamma$ from $x$ to $x'$, that is, a continuous map $\Gamma:[0,1]\to X_\gamma$ such that $\Gamma(0)=x$ and $\Gamma(1)=x'$.  But $T_{y_{i_0}}$ is not definably trivial over $\pi(\Gamma([0,1]))$ by choice of $i_0$, and in particular this implies that $\pi(\Gamma([0,1]))$ is not contained in $\pi(X_\gamma)$, a contradiction.  
}

We now define a map which will aid in the selection process to decide which sets of the $(i,j)$-partitions are be included in the final scdd.  Define the map $\theta$ from $\R^k$ to the collection of sets among all $(i,j)$-partitions as follows.   For each $x\in \R^k$, 
 if $\xi(x)\neq \emptyset$, let $i_0=\min(\xi(x))$, and let $\theta(x)$ be the unique $X$ in the $(i_0,i_0)$-partition which contains $x$. Otherwise, if $\xi(x)=\emptyset$ then let $i_0=\min(\min(\mu(x)),\min(\nu(x))$ and $j_0=\max(\min(\mu(x)),\min(\nu(x))$ and let $\theta(x)$ be the unique $X$ in the $(i_0,j_0)$-partition which contains $x$.  We say that a set $X$ in an $(i,j)$-partition satisfies the inclusion criterion if $X=\theta(x)$ for some $x\in \R^k$.  
 

Now collect all sets from all $(i,j)$-partitions which satisfy the selection criterion, denote this collection by $\mathcal{A}=\{X_\gamma\}_{\gamma\in J}$, and 
by the selection criterion we see that each $x\in \R^k$ is contained in a unique set in the collection $\mathcal{A}$.  
\hide{
\hide{
Let $(a,x)\in \R\times \R^{k-1}$ and we show that $(a,x)\in X_\gamma$ for some $\gamma \in J$.  
Consider the common refinement of $T_{(x,y_1)},\ldots,T_{(x,y_n)}\subseteq \R$.  There are two cases to consider, either $a$ is an endpoint of the common refinement or $a$ belongs to an interval of the common refinement.  If $a$ belongs to an interval, then $a\in (b,c)$ where $b,c$ are endpoints of the common refinement corresponding to endpoints of $T_{(x,y_i)},T_{(x,y_j)}$, respectively, for some $i,j\in \{1,\ldots,n\}$.  
For this pair $(i,j)$, we have that $x\in C^{i,j}_\alpha$ for some $\alpha \in I$ (cf. Equation \ref{eq:scdd}).  So, $(a,x)$ belongs to the set $X^{i,j}_{C,\alpha}$ in the $(i,j)$-partition, where $C=\pi_{\leq 1}((\phi^{i,j})^{-1}(b,c))$ is an interval in $\mathcal{A}_A$, $A\in \mathcal{B}^{i,j}$ (cf. Equation \ref{eq:B}).  We show that this set $X^{i,j}_{C,\alpha}$ satisfies the inclusion criterion, hence belongs to a K-decomposition, and is therefore equal to $X_\gamma$ for some $\gamma \in J$.  
Since the interval $(b,c)$ contains no endpoints of $T_{(x,y_1)},\ldots,T_{(x,y_n)}$, the interval $(b,c)$ satisfies, for all $p$, $1\leq p\leq n$, $$(b,c)\subseteq T_{(x,y_p)} \vee (b,c)\cap T_{(x,y_p)}=\emptyset.$$  By construction, $C^{i,j}_\alpha\subseteq A$ for some $A$ in the boolean algebra $\mathcal{B}^{i,j}=\mathcal{B}(B_l(y_q,y_p)|q\in \{i,j\}, 1\leq p\leq n, 1\leq l\leq M)$.  Since the set $C^{i,j}_\alpha$ is adapted to $B_l(y_q,y_p)$, $q\in \{i,j\}$ and $1\leq p \leq n$, for each $x'\in C^{i,j}_\alpha\subseteq A$ and each $p$, $1\leq p\leq n$, we have $$\phi^{i,j}_{C,A}(C\times \{x'\})\subseteq T_{(x',y_p)} \vee \phi^{i,j}_{C,A}(C\times \{x'\}) \cap T_{(x',y_p)}=\emptyset,$$
hence the inclusion criterion is satisfied for $X^{i,j}_{C,\alpha}$, which finishes this case.  The case where $a$ is an endpoint of the common refinement is similar and is omitted. 
}
\hide{
It remains to show that if $X_\gamma \cap X_{\gamma'} \neq \emptyset$ then $X_{\gamma}=X_{\gamma'}$.  Suppose, seeking a contradiction, that $X_\gamma \cap X_{\gamma'} \neq \emptyset$ and $X_\gamma \neq X_{\gamma'}$.  
Let $x,x'\in \R^k$ such that $x\in X_\gamma \cap X_{\gamma'}$ and $x'\in (X_\gamma \cup X_{\gamma'})\smallsetminus ( X_\gamma \cap X_{\gamma'})$.  Without loss of generality, suppose $x\in X_\gamma\cap X_{\gamma'}$ and $x'\in X_{\gamma}\smallsetminus X_{\gamma'}$.  
Let 
$A,A'\in \mathcal{B}$ (cf. Equation \ref{eq:B}) such that 
$\pi(X_\gamma)=A$ and $\pi(X_{\gamma'})=A'$, so $\pi(x)\in A\cap A'$, and $\pi(x')\in A$. 
}
}
So, $\mathcal{A}=\{X_\gamma\}_{\gamma \in J}$ 
is a partition of $\R^k$ and, by definition and Lemma \ref{lem:well}, it is adapted to $T_{y_1},\ldots,T_{y_n}$, and we have shown that each set is definable and has the form in the statement of the theorem (via Equation \ref{eq:sets} and Equation \ref{eq:VV}).  It remains to show that the partition has the right number of sets.
Each of the $n^2$ K-decompositions consists of at most $2NC_2' n^{2(k-1)-1}$ sets, so there are at most $n^2\cdot 2NC_2' n^{2(k-1)-1}\leq C_2n^{2k-1}$ sets in the partition $\mathcal{A}$ for some $C_2$ depending only on $T$.



\end{proof}

\begin{remark}
In certain settings, in particular for semi-algebraic sets, a slightly better bound on the number of sets in the semi-cylindrical decomposition can be proven as follows.  First one proves a case by case bound which is better than the proposed bound on the number of sets in the decomposition in low dimension using the additional tools available for semi-algebraic sets in low dimension, e.g., for dimension $k=2,3,4$.  Using the better bound as a point of departure and the same provable growth rate in the induction step, the technique of the proof given above yields an improved bound taking advantage of the fact that better bounds can be proven in low dimension in these certain situations.

\end{remark}









\bibliography{master}



\end{document}